\documentclass[reqno]{amsart}

\usepackage{amscd,amsthm,amsmath,amssymb,mathtools}
\usepackage{upgreek,bm}
\usepackage{verbatim}
\usepackage{color}
\usepackage{url}
\usepackage{enumerate,enumitem}
\usepackage{graphicx}
\usepackage{algorithm}
\usepackage{algorithmic}
\usepackage{epstopdf,epsfig,subfigure}
\usepackage{curve2e}
\usepackage{mathrsfs}
\usepackage{array}
\usepackage{stackrel}

\usepackage[numbers,sort&compress]{natbib}

\usepackage{setspace}
\onehalfspacing

\usepackage[
    bookmarks=false,         
    unicode=true,          
    pdftoolbar=true,        
    pdfmenubar=true,        
    pdffitwindow=false,     
    pdfstartview={FitH},    
    pdftitle={Approximate controllability for 2D Euler equations},    
    pdfauthor={Author},     
    pdfsubject={Subject},   
    pdfcreator={Creator},   
    pdfproducer={Producer}, 
    pdfkeywords={keyword1, key2, key3}, 
    pdfnewwindow=true,      
    colorlinks=true,       
    linkcolor=red,          
    citecolor=blue,        
    filecolor=magenta,      
    urlcolor=cyan,           
hypertexnames=false
]{hyperref}

%
%
%
%
%
%

\theoremstyle{plain}

\newtheorem{theorem}{Theorem}[section]

\newtheorem{lemma}[theorem]{Lemma}
\newtheorem{corollary}[theorem]{Corollary}

\newtheorem{assumption}[theorem]{Assumption}

\theoremstyle{definition}
\newtheorem{definition}[theorem]{Definition}

\newtheorem{question}[theorem]{Question}

\numberwithin{equation}{section}

\usepackage{geometry}\geometry{left=1.5in,right=1.5in,top=1.35in,bottom=1.5in}




\newcommand{\linspan}{\mathop{\rm span}\nolimits}

\newcommand{\supp}{\mathop{\rm supp}\nolimits}
\newcommand{\diver}{\mathop{\rm div}\nolimits}
\newcommand{\curl}{\mathop{\rm curl}\nolimits}

\newcommand{\rest}{\left.\kern-2\nulldelimiterspace\right|_}
\newcommand{\norm}[2]{\left|#1\right|_{#2}}
\newcommand{\dnorm}[2]{\left\|#1\right\|_{#2}}

\newcommand{\vol}{\mathop{\rm vol}\nolimits}

\newcommand{\Id}{{\mathbf1}}

\newcommand{\p}{\partial}

\newcommand*{\Bigcdot}{\raisebox{-.25ex}{\scalebox{1.25}{$\cdot$}}}



\newcommand{\clB}{{\mathcal B}}
\newcommand{\clC}{{\mathcal C}}
\newcommand{\clD}{{\mathcal D}}
\newcommand{\clE}{{\mathcal E}}

\newcommand{\clG}{{\mathcal G}}
\newcommand{\clH}{{\mathcal H}}

\newcommand{\clL}{{\mathcal L}}

\newcommand{\clP}{{\mathcal P}}

\newcommand{\clS}{{\mathcal S}}

\newcommand{\clU}{{\mathcal U}}

\newcommand{\clX}{{\mathcal X}}
\newcommand{\clY}{{\mathcal Y}}
\newcommand{\clZ}{{\mathcal Z}}


\newcommand{\bbB}{{\mathbb B}}

\newcommand{\bbE}{{\mathbb E}}
\newcommand{\bbF}{{\mathbb F}}

\newcommand{\bbN}{{\mathbb N}}

\newcommand{\bbR}{{\mathbb R}}
\newcommand{\bbS}{{\mathbb S}}
\newcommand{\bbT}{{\mathbb T}}


\newcommand{\bfB}{{\mathbf B}}

\newcommand{\bfH}{{\mathbf H}}

\newcommand{\bfX}{{\mathbf X}}


\newcommand{\fkB}{{\mathfrak B}}

\newcommand{\fkF}{{\mathfrak F}}

\newcommand{\fkH}{{\mathfrak H}}

\newcommand{\fkT}{{\mathfrak T}}
\newcommand{\fkU}{{\mathfrak U}}

\newcommand{\fkY}{{\mathfrak Y}}


\newcommand{\rmD}{{\mathrm D}}



\newcommand{\bfb}{{\mathbf b}}
\newcommand{\bfc}{{\mathbf c}}

\newcommand{\bff}{{\mathbf f}}

\newcommand{\bfn}{{\mathbf n}}


\newcommand{\rmc}{{\mathrm c}}
\newcommand{\rmd}{{\mathrm d}}
\newcommand{\rme}{{\mathrm e}}

\newcommand{\rms}{{\mathrm s}}


\newcommand{\fkc}{{\mathfrak c}}

\newcommand{\fkp}{{\mathfrak p}}

\newcommand{\fks}{{\mathfrak s}}

\newcommand{\fky}{{\mathfrak y}}

\newcommand{\tta}{{\mathtt a}}

\definecolor{DarkBlue}{rgb}{0,0.08,0.45}
\definecolor{DarkRed}{rgb}{.65,0,0}
\definecolor{applegreen}{rgb}{0.55, 0.71, 0.0}
\definecolor{gray06}{rgb}{0.6, 0.6, 0.6}

\newcounter{mymac@matlab}
  \setcounter{mymac@matlab}{0}
\newcommand{\matlab}{MATLAB%
   \ifnum\value{mymac@matlab}<1%
   \textregistered%
   \setcounter{mymac@matlab}{1}%
   \fi%
  }

\newcommand{\red}{ \color{red} }


\usepackage{todonotes}
\setlength{\marginparwidth}{2cm}
\newcommand{\todoautc}[3][]{%
    \ifthenelse{\equal{#1}{}}{\todo[size=\scriptsize]{{\bf#2} #3}{}}{\todo[color=#1,size=\scriptsize]{{\bf#2} #3}{}}%
}

\newcommand{\noteC}[1]{\todoautc[cyan!50]{}{#1}}

\newcommand{\addref}[1][]{{\red\tt addref}\noteC{...}}




\begin{document}
\title{Approximate controllability for 2D Euler equations}
\author{S\'ergio S.~Rodrigues$^{\tt1}$}

 \thanks{
\vspace{-1em}\newline\noindent
{\sc MSC2020}: 93C20, 93C10, 93B05, 35Q31, 35Q35 
\newline\noindent
{\sc Keywords}: {approximate controllability, Euler  equations, perfect incompressible fluids, saturating set of actuators, finite dimensional control input, Agrachev--Sarychev approach}
\newline\noindent
$^{\tt1}$ Johann Radon Institute for Computational and Applied Mathematics (RICAM), Austrian Academy of Sciences, Science Park 2,
  Altenbergerstrasse 69, 4040 Linz, Austria.
  \newline\noindent
{\sc Email}:
 {\small\tt sergio.rodrigues@ricam.oeaw.ac.at}.
  \newline\noindent
}

\begin{abstract}
Approximate controllability of the Euler equations is investigated by means of a finite set of
 actuators. 
It is proven that approximate controllability holds if we can find a  saturating subset of actuators. The notion of saturating set is relaxed when compared to previous literature, still being a sufficient condition for approximate controllability. 
The result holds for general bounded two-dimensional spatial domains with smooth boundary. An example of a saturating set is given in the case the spatial domain is the unit disk.
\end{abstract}
\maketitle

\pagestyle{myheadings} \thispagestyle{plain} \markboth{\sc S. S.
Rodrigues}{\sc Approximate controllability for 2D Euler equations}

%

\section{Introduction}
We investigate approximate controllability properties of the 2D Euler
system on bounded spatial domains~$\Omega\subset\bbR^2$.
The evolution is considered in a given time interval~$[0,T]$, $T>0$. The state of the system is the velocity field~$y(t,x)=(y_1(t,x),y_2(t,x)) \in\bbR^2$ of a given perfect incompressible fluid contained in~$\Omega$, with~$(t,x)\in(0,T)\times\Omega$, where~$x=(x_1,x_2)$ stands for the Cartesian coordinates of a generic point in~$\Omega$.

The controlled  dynamics is given by
\begin{align}
&\tfrac{\partial y}{\partial t}+ (y\cdot \nabla)y =f+
U^\diamond u+\nabla p_y,\qquad \diver y=0,\qquad \bfn\cdot y\rest{\p\Omega}=0,\qquad y(0,\cdot)=y_0,\label{eul-sys}
\end{align}
where~$\p\Omega$ denotes the boundary of~$\Omega$ and~$\bfn=\bfn(\overline x)$  stands for the unit outward normal vector to~$\p\Omega$, at~$\overline x\in\p\Omega$.
The nonlinear operator is understood as
\begin{equation}\label{B-formal}
(y\cdot \nabla)z\coloneqq(y\cdot\nabla z_1, y\cdot\nabla z_2).
\end{equation}
The function~$p_y(t,x)$ represents the pressure and~$f(t,x)$ is a given external forcing.

We are interested in the case where the control forcing~$U^\diamond u$ lies, for  every~$t\in(0,T)$, in a subspace~$\clU$ spanned by a finite number of appropriate linearly independent vector fields~$\Phi_k=\Phi_k(x)$,~$x\in\Omega$,
\begin{align}
&U\coloneqq \{\Phi_k\mid 1\le k\le M\}\subset L^{2}(\Omega)^2,\quad
\clU\coloneqq\linspan U,\quad \dim\clU=M,\notag\\
&U^\diamond\colon\bbR^M\to\clU,\quad U^\diamond z\coloneqq {\textstyle\sum_{k=1}^M} z_k\Phi_k,
\quad U^\diamond u(t)\coloneqq U^\diamond(u(t)),\notag
\end{align} 
where~$L^{2}(\Omega)$ denotes the Lebesgue space of square  integrable functions~$h\colon\Omega\to\bbR$ and the scalars~$z_k$ stand for the Cartesian coordinates of~$z=(z_1,z_2,\dots,z_M)\in\bbR^M$.
Thus, the control forcing~$U^\diamond u(t)\in\clU$ is realized via  the control input~$u=u(t)\in\bbR^M$, the choice of which
is at our disposal (e.g., among functions~$u\in L^\infty((0,T),\bbR^M)$).

The results apply to the case where the domain satisfies the following.
\begin{assumption} \label{A:domainMultiConn} 
$\Omega\subset\bbR^2$ is a bounded open connected subset with boundary $\p\Omega=\bigcup_{k=1}^{\varkappa}\Gamma_k$ given by the union of a finite number~$\varkappa$ of disjoint simple closed curves~$\Gamma_k$, $1\le k\le \varkappa$, each of class~$\clC^{\infty}$ and of finite length.
\end{assumption}

Multiconnected domains as in Assumption~\ref{A:domainMultiConn} are considered in several works in the literature related to Euler and Navier--Stokes equations; see~\cite{AuchAlex01}, \cite[Sect.~3]{FattoriniSrith92}, \cite{Kelliher06}, \cite[Appx.~I]{Temam01}. In particular, we shall make use of a result derived for such domains  in~\cite{Temam01}.

\subsection{Approximate controllability and saturating sets}\label{sS:appcontrSatset-intro}
Concerning approximate controllability, our goal is to find a set~$U$ of actuators so that for given arbitrary constants~$T>0$ and~$\varepsilon>0$, and arbitrary solenoidal vector functions~$y_0$ and~$y_\tta$, there exists a control input~$u$ so that the velocity field~$y$ solving~\eqref{eul-sys}, issued from~$y(0)=y_0$ at initial time~$t=0$,
satisfies~$\norm{y(T)-y_\tta}{L^2(\Omega)}<\varepsilon$ at final time~$t=T$.

Likely, the activity aimed at controlling Navier--Stokes and Euler systems by a finite number of actuators (independently of the viscosity coefficient~$\nu$ in the case of Navier--Stokes equations; see~\eqref{ns-sys})  has started in~\cite{EMat01,Romito04} addressing controllability properties of finite-dimensional Galerkin approximations.
In general, it is not trivial how to use controllability properties of such approximations to  derive analogue controllability properties for the infinite-dimensional (limit) Euler system.  
The study of the infinite-dimensional system started in~\cite{AgraSary05,AgraSary06}, where the spatial domain is the boundaryless Torus (i.e., periodic ``boundary conditions''; in this case the boundary condition~$\bfn\cdot y\rest{\p\Omega}=0$ in~\eqref{eul-sys} is void/omitted). The approach in~\cite{AgraSary05,AgraSary06} shows that the approximate controllability for 2D Euler and Navier--Stokes equations will follow if a suitable saturating subset~$G^0\subseteq U$ of actuators can be found. 

Usually, showing that a subset~$G^0$ of actuators is saturating is done by direct computations using properties of the  bilinear operator in~\eqref{B-formal}. Finding saturating sets turns out to be a nontrivial task, in particular, for bounded domains, due to the presence of the physical boundary~$\p\Omega$. After ``eliminating'' the pressure term, by writing the system as an evolutionary equation in the space of solenoidal vector fields
\begin{equation}\label{H}
H\coloneqq\{z\in L^2(\Omega)^2\mid\diver z=0\mbox{ and }\bfn\cdot z\rest{\p\Omega}=0\},
\end{equation}
we will be, in fact, interested in properties of the operator
\begin{equation}\label{B-Leray}
B(y,z)\coloneqq \Pi((y\cdot \nabla)z).
\end{equation}
where~$\Pi\colon L^2(\Omega)\to H$ stands for the Leray orthogonal projection in~$L^2(\Omega)$ onto~$H$. This projection depends on~$\Omega$ and computing~$B(y,z)$ explicitly is a nontrivial task, in general. 
In the case where~$\Omega=\bbT^2$ is a Torus~\cite{AgraSary05} it is possible to perform such computations and conclude the desired approximate controllability, by taking actuators given by eigenfunctions of the Stokes operator and by exploring the fact that such eigenfunctions have suitable qualitative properties as, for example,
\begin{equation}\label{Torus-B_eig}
\begin{split}
&\mbox{for two given eigenfunctions~$\phi$ and~$\psi$ in a Torus, we have that}\\
&\mbox{$B(\phi,\psi)$ is a linear combination of a finite number of eigenfunctions}
\end{split}
\end{equation}
and further, exploring the fact that we have simple explicit expressions available for such eigenfunctions. For general domains the explicit expressions for the eigenfunctions of the Stokes operator may  be not available or may not have the same qualitative properties (e.g., as~\eqref{Torus-B_eig}). Thus computing~$B(\phi,\psi)$ explicitly can be difficult (or, even not possible).

In~\cite[Def.~11]{AgraSary05} the elements of the saturating set are, by definition, eigenfunctions of the Stokes operator (thus, eigenfunctions of the Laplacian in vorticity formulation). In~\cite[Thm.~6.1]{AgraSary08} and~\cite[Def.~2.2.1]{Rod-Thesis08} (in the context of Navier--Stokes equations), it is proposed to take saturating sets consisting of more general steady states~$\psi$ of the unforced free Euler dynamics, thus satisfying~$B(\psi,\psi)=0$.

In~\cite{Shirikyan06,Shirikyan07}, in the context of Navier--Stokes equations, it is already required neither that the actuators~$\psi$ satisfy~$B(\psi,\psi)=0$ nor that~$\psi$ are eigenfunctions of the Stokes operator~$A$. It is however required that~$\psi$ are in the domain~$\rmD(A)$ of~$A$. This operator depends on suitable extra boundary conditions~$(\fkT y)\rest{\p\Omega}=0$, besides the~$\bfn\cdot y\rest{\p\Omega}=0$ in~\eqref{eul-sys}; see~\eqref{ns-sys} below. Since~$\fkT$ and~$A$ have nothing to do with the Euler equations, it makes sense to seek a ``relaxed'' notion of saturating set independently of these ``external'' operators~$\fkT$ and~$A$.

\begin{question}\label{Q:weaken}
Can we find a notion of saturating set of actuators involving only the boundary conditions~$\bfn\cdot y\rest{\p\Omega}=0$  in~\eqref{eul-sys} and, simultaneously, guarantee that the existence of such saturating set still implies approximate controllability? 
\end{question}

\subsection{Novelties}\label{sS:novel-intro}
The main goal of this manuscript is to give a positive answer to Question~\ref{Q:weaken}, in the context of spatial bounded domains~$\Omega\subset\bbR^2$. In the context of Euler equations, this is the first result in the literature for such domains, which necessarily have a nonempty boundary; previous works have considered only the case where the spatial domain is a boundaryless Riemannian manifold, namely, either a Torus~\cite{AgraSary05,AgraSary06} or  a Sphere~\cite{AgraSary08}. Further, to illustrate the relevance of the positive answer to Question~\ref{Q:weaken}, we give an example of a saturating set of actuators in the case the spatial domain is the unit disk, where the actuators do not satisfy classical boundary conditions for the Navier--Stokes equations as of Dirichlet (no-slip) or Lions (slip) type; in fact we do not know if they satisfy any extra boundary conditions, besides those in~\eqref{eul-sys}, which are meaningful for the Navier--Stokes systems. Recall also that, for bounded smooth domains~$\Omega\subset\bbR^2$ (with nonempty boundary), no example of a set of actuators  is known which satisfies other definitions of saturating sets in the literature involving the ``external'' Stokes operator~$A$ (examples are known for rectangular domains and under Lions boundary conditions only, in the context of Navier--Stokes equations~\cite{Rod06,PhanRod19}).

\subsection{Euler versus Navier--Stokes equations}
In this manuscript, we focus on the Euler equations, and postpone the investigation of the Navier--Stokes equations as
\begin{subequations}\label{ns-sys}
\begin{align}
&\tfrac{\partial y}{\partial t}-\nu\Delta y+ (y\cdot \nabla)y =f+
U^\diamond u+\nabla p_y,\quad\diver y=0,\\&\bfn\cdot y\rest{\p\Omega}=0,\quad (\fkT y)\rest{\p\Omega}=0,\quad y(0,\cdot)=y_0,
\end{align}
\end{subequations}
 for a future work; above~$\nu>0$ and~$\fkT$ is an operator imposing additional boundary conditions.

Though, at the first sight it is likely that an analogue procedure can be followed, there are key technical issues related with the different nature of the equations that must be carefully checked. These technical details become more evident when the actuators in the set~$U$ are either not regular enough or do not satisfy the boundary conditions. Some of these technical difficulties are avoided when the domain is boundaryless as in some previous literature.

 For the analysis it will likely be convenient to take a different functional setting. 
 In general, the well-posedness of the Euler equations  requires more smoothness for some data (as the initial velocity~$y_0$ and the external force $f+U^\diamond u$), while the analysis for Navier--Stokes equations will require the handling of the extra boundary conditions, $(\fkT y)\rest{\p\Omega}=0$. For example, no-slip Dirichlet conditions require the velocity to vanish on the boundary,~$y\rest{\p\Omega}=0$,  while slip Lions conditions require  the vorticity to vanish on the boundary, $(\curl y)\rest{\p\Omega}=0$. We shall shortly revisit these technical details associated with the boundary conditions, later in Section~\ref{sS:rmk-NS}.

\subsection{Related literature}\label{sS:liter-intro}
Without entering into the details at this point, the definition of saturating set will involve an increasing sequence of finite-dimensional subspaces~$\clG^{0}\coloneqq\linspan G_0$, $\clG^{j}\subset\clG^{j+1}\subset H$ constructed recursively by an appropriate procedure $\clG^{j}\mapsto\clG^{j+1}$ involving the nonlinear operator~\eqref{B-Leray}. The set~$G_0$ is said saturating if the union~$\bigcup_{j=0}^{\infty}\clG^j$  is dense in~$H$. 
So far, the examples of saturating sets available in the literature are given by subsets~$G_0$ consisting of eigenfunctions of the Stokes operator; see~\cite[Sect.~4, Cor.~2]{AgraSary06} for the case a Torus as spatial domain and~\cite[Thm.~10.4]{AgraSary08} of for the case of the 2D Sphere.

Though, we do not address the~3D case, we would like to mention the works in~\cite{Nersisyan10,Nersisyan11,Shirikyan08} (again, for the case of a Torus as spatial domain),  where  a variation of the Agrachev--Sarychev approach is used.
Also, though we do not address the Navier--Stokes equations, we would like to mention the examples of saturating sets as a subset of eigenfunctions given in~\cite[Def.~6.2]{Rod06} for 2D rectangles; in~\cite{Rod-Thesis08} for the 2D hemisphere;  in~\cite{PhanRod19} for 3D rectangles;  in~\cite{Phan21} for 3D cylinders. As a matter of fact, all these cases are somehow related to the cases of the Torus and Sphere.

Since it is not possible to guarantee, for general~$\Omega$, that the ``added'' vectors~$\clG^{j+1}\setminus\clG^{j}$ are (or contain) eigenfunctions, some relaxed definitions of saturating sets have been proposed in the literature. For example, in~\cite{Shirikyan06} for 3D Navier--Stokes equations including  no-slip boundary conditions, it is not required that the elements of the saturating sequence of vector fields~$\clG^j$ are the linear span of a finite number of eigenfunctions and it is again shown that the existence of such a relaxed saturating set also guarantees the approximate controllability.  The vector spaces~$\clG^j$ are only assumed to be subspaces of the domain~$\rmD(A)$ of the Stokes operator~$A\colon\rmD(A)\to H_3$ in~\cite[Sect.~2.1 (cf. Sects.1.1 and~1.2)]{Shirikyan06}, where~$H_3\coloneqq\{z\in L^2(\Omega)^3\mid\diver z=0\mbox{ and }\bfn\cdot z\rest{\p\Omega}=0\}$ is the subspace of solenoidal vector fields (cf.~\eqref{H}). An example of a saturating sequence satisfying such a constraint~$\clG^{j}\subset\rmD(A)$  is not known yet for no-slip boundary conditions. 
%
%
For further works where (variations of) this method, introduced in~\cite{AgraSary05}, is used we mention~\cite{Shirikyan07,Shirikyan06,PhanRod-ecc15,AgrKukSarShir07,Nersesyan21,Nersesyan15} for Navier--Stokes equations; \cite{Shirikyan18,Shirikyan_Evian07} for Burgers equations; \cite{Sarychev12} for Schr\"odinger  equations; \cite{NgoRaugel21} for second grade fluids; and~\cite{BoulvardGaoNers23} for primitive equations.

\subsection{Contents and notation}
In Section~\ref{S:defMainRes} we precise the concepts of saturating set and of approximate controllability and state the main results in Theorems~\ref{T:approxcontrol} and~\ref{T:saturating}. The proof of these results are presented in Sections~\ref{S:proofT:approxcontrol}--\ref{S:proofT:saturating-vort}. Section~\ref{S:final-rmks} collects concluding remarks on the derived result and on potential future work. Finally, the Appendix gathers the proofs of auxiliary results.

\medskip
Concerning notation, $\bbR$ will stand for the set of real numbers and~$\bbN$ for the set of nonnegative integers; their subsets of positive elements are denoted by~$\bbR_+\subset\bbR$ and~$\bbN_+\subset\bbN$.

Given  Hilbert spaces~$X$ and~$Y$, we denote by~$\clL(X,Y)$ the space of continuous linear mappings from~$X$ into~$Y$. The continuous dual of~$X$ is denoted~$X'\coloneqq\clL(X,\bbR)$.

By~$\Omega\subset\bbR^2$, we shall denote a generic bounded open and connected subset, which will be our spatial domain. For simplicity, we denote the Lebesgue function space~$L^2\coloneqq L^2(\Omega)$ and the Sobolev function spaces~$W^{s,2}\coloneqq W^{s,2}(\Omega)\subset L^2(\Omega)$, $s>0$. Again for simplicity, sometimes we shall denote spaces of vector fields as~$(L^2)^2=L^2\times L^2$ and~$(W^{s,2})^2=W^{s,2}\times W^{s,2}$,  simply by~$L^2$ and~$W^{s,2}$, respectively; the context will avoid ambiguities.

For~$k\in\bbN$, we denote by~$\clC^{k}=\clC^{k}(\overline\Omega)$ the space of functions from the closed subset~$\overline\Omega$ into~$\bbR$ with  bounded continuous partial derivatives up to order $k$, $k\in\bbN$. Similarly, ~$\clC^{k,1}=\clC^{k,1}(\overline\Omega)$ denotes the space of functions from~$\overline\Omega$ into~$\bbR$ with  bounded Lipschitz continuous partial derivatives up to order $k$.
Again, for vector fields, we denote~$\clC^{k}\times\clC^{k}$ and~$\clC^{k,1}\times\clC^{k,1}$, simply by
~$\clC^{k}$ and~$\clC^{k,1}$, respectively. We also denote~$\clC^{\infty}\coloneqq\bigcap_{k=0}^\infty\clC^{k}$.

 For functions depending on the time variable, we denote~$\clC^k([0,T],X)$ the space of functions $g\colon[0,T]\mapsto X$ with continuous derivatives~$(\frac{\rmd}{\rmd t})^j h$, ~$0\le j\le k$, and~$\clC^\infty([0,T],X)\coloneqq \bigcap_{k=0}^\infty\clC^k([0,T],X)$. Further, we denote~$\clC^{m;k,1}([0,T]\times\overline\Omega)$ the space of continuous functions~$h=h(t,x)$ from~$[0,T]\times\overline\Omega$ into~$\bbR$ such that~$(\frac{\p}{\p t})^j h\in\clC^0([0,T],\clC^{k,1}(\overline\Omega))$ is continuous from~$[0,T]$ into~$\clC^{k,1}(\overline\Omega)$ for all~$0\le j\le m$. Again, for vector fields, we denote
$\clC^{m;k,1}([0,T]\times\overline\Omega)\times\clC^{m;k,1}([0,T]\times\overline\Omega)$ simply by
$\clC^{m;k,1}([0,T]\times\overline\Omega)$.

\section{Definitions and main results}\label{S:defMainRes}
We introduce  spaces of  functions  which are appropriate to investigate the Euler system~\eqref{eul-sys}. We also present the definitions of saturating set and approximate controllability and, finally, we precise the statement of the main results. 

As usual we  consider the space~$L^2$ (both of functions and of vector functions) as a pivot Hilbert space, endowed with the usual scalar product. That is, it is identified with its continuous dual,~$L^2=(L^2)'$. Then, we can also identify~$H=H'$,
where~$H$ is as in~\eqref{H} and is endowed with the scalar product inherited from~$L^2$.

Hereafter, we assume that the initial state, the external forcing, and the set of actuators satisfy~$y_0\in H$, $f(t,\cdot)\in H$, and~$U\subset H$. Then, by using the Leray projection we 
 rewrite~\eqref{eul-sys} as an evolutionary equation in~$H$, as follows, with~$B$ as in~\eqref{B-Leray}, 
  \begin{align}\label{evol-sys-U}
&\dot y+ B(y,y) =f+
U^\diamond u,\qquad  y(0,\cdot)=y_0.
\end{align}

\begin{definition}\label{D:linspan}
Given a subset~$S\subset H$, the linear span of~$S$ is denoted 
\begin{align}\label{spanS}
\linspan S&\coloneqq\left\{z={\textstyle\sum_{j=1}^m}c_js_j\mid m\in\bbN_+\mbox{ and } (c_j,s_j)\in \bbR\times S\mbox{ for }1\le j\le m \right\}.
\end{align}
\end{definition}

We introduce the operator
  \begin{align}
\clB(z)y&\coloneqq B(z,y)+B(y,z)\notag
\end{align}
and, for subspaces~$Z\subseteq H$, $Y\subseteq H$, and~$\clH\subseteq H$, we also introduce the subspace
  \begin{align}\label{BZH-clH}
\clB_\clH(Z)Y\coloneqq\linspan \{\clH{\,\textstyle\bigcap\,}\clB(z)y\mid (z,y)\in Z\times Y \}.
\end{align}

\begin{definition}\label{D:saturVlin}
Given a subspace~$\clH\subset H$, a finite subset~$G_0\subset \clH$ is said $\clH$-saturating if the sequence~$(\clG^j)_{j\in\bbN}$ of subspaces~$\clG^j\subset \clH$ defined  by
\begin{align}\label{seqclG}
\clG^0&\coloneqq\linspan G_0,\qquad
\clG^{j+1}\coloneqq\clG^{j}+\clB_\clH(\clG^0)\clG^{j},
\end{align}
is such that the union~${\textstyle\bigcup_{j\in\bbN}}\clG^j$ is dense in~$H$. 
\end{definition}

\begin{definition}\label{D:acon-U}
Given~$T>0$, $ f\in L^\infty((0,T),H)$, and a subset~$U=\{\Phi_1,\Phi_2,\dots,\Phi_M\}\subset H$,  we say that  system~\eqref{evol-sys-U} is approximately controllable at time~$T$, if for every given initial state $y_0\in  H\bigcap \clC^{\infty}(\overline\Omega)$, every aim/target state~$y_\tta\in  H\bigcap\clC^\infty(\overline\Omega)$, and every~$\varepsilon>0$, we can find a control input~$u\in L^\infty((0,T),\bbR^M)$ such that the corresponding state satisfies
$\norm{y(T)-y_\tta}{H}\le \varepsilon$. 
\end{definition}

The first contribution of this manuscript is as follows. 

\begin{theorem}\label{T:approxcontrol}
Let~$G_0$ be a $(H\bigcap\clC^{2,1}(\overline\Omega))$-saturating set as in Definition~\ref{D:saturVlin-vort}, such that
  \begin{equation}\label{U}
U\coloneqq G_0{\,\textstyle\bigcup\,}\{B(h,h)\mid h\in G_0\}\subset H{\,\textstyle\bigcap\,}\clC^{2,1}(\overline\Omega).
\end{equation}
Let also $T>0$ and~$f\in \clC^{\infty;1,1}([0,T]\times\overline\Omega)\bigcap L^\infty((0,T),H)$. Then, system~\eqref{evol-sys-U} is approximately controllable at time~$T$.
Furthermore, we can take~$u\in \clC^\infty([0,T],\bbR^M)$. 
\end{theorem}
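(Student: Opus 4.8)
The plan is to follow the Agrachev--Sarychev scheme adapted to a bounded two-dimensional domain, proceeding in three layers. First I would establish the well-posedness framework: since $y_0, y_{\tta} \in H \cap \clC^\infty(\overline\Omega)$ and $f \in \clC^{\infty;1,1}([0,T]\times\overline\Omega)$, and since the actuators in $U$ lie in $\clC^{2,1}(\overline\Omega)$, one works in a scale of spaces (essentially $H \cap W^{s,2}$ for suitable $s$, or H\"older/Lipschitz spaces) in which the 2D Euler flow is globally well-posed and depends continuously on the data $(y_0, f + U^\diamond u)$ in the $H$-topology. The key point is that the vorticity transport structure in 2D gives global existence with control of the relevant norms, and that the solution map is continuous enough to justify the density/limiting arguments below. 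I would state this as a preliminary lemma (or invoke the corresponding one proved later in the paper).

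The heart of the argument is the \emph{extended system / convexification} step. One introduces the auxiliary control system
\begin{equation*}
\dot y + B(y,y) = f + \eta + B(\zeta,\zeta) - B(y,\zeta) - B(\zeta,y), \qquad y(0) = y_0,
\end{equation*}
where $\eta(t) \in \clG^0 = \linspan G_0$ and $\zeta(t) \in \clG^0$ are the new (larger) controls; substituting $w = y + \zeta$ and using bilinearity shows this is equivalent to $\dot w + B(w,w) = f + \eta + \dot\zeta$ with a shifted right-hand side, so the extra nonlinear terms are exactly what is absorbed by a change of variables. The crucial algebraic fact is that the reachable set of this extended system, with controls valued in $\clG^0$, contains (in the closure) the reachable set of a system whose controls are valued in $\clG^0 + \clB_{\clH}(\clG^0)\clG^0 = \clG^1$, because one can realize directions in $\clB(\zeta)y$ by fast oscillations of $\zeta$ (a relaxation/averaging argument: large-amplitude, short-duration pulses of $\zeta$ average out in the $H$-norm except for the quadratic contribution $B(\zeta,\zeta)$ and the bilinear cross terms, which is precisely the content of the standard Agrachev--Sarychev convexification/relaxation lemma). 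This is where the relaxed notion of saturating set matters: one does \emph{not} need $G_0$ to consist of steady states; one only needs the recursively generated spaces $\clG^j$, intersected with the regularity class $\clC^{2,1}$, to exhaust $H$ in the limit. Iterating the extension $\clG^j \mapsto \clG^{j+1}$ finitely many times, one obtains that for any $N$, the closure of the reachable set from $y_0$ using controls valued in $\clG^N \cap \clC^{2,1}(\overline\Omega)$ equals the closure of the reachable set using controls valued in $\clG^0$ (i.e.\ using the original actuators $U^\diamond u$, $u \in \bbR^M$).

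Next, the \emph{large-control / finite-dimensional targeting} step: one shows that with controls valued in $\clG^N$ one can approximately steer $y_0$ to $y_{\tta}$. Choose $N$ large enough that $\clG^N$ contains a vector $\tilde y_{\tta}$ with $\norm{\tilde y_{\tta} - y_{\tta}}{H}$ small (possible since $\bigcup_j \clG^j$ is dense in $H$ and the $\clG^j$ are inside $\clC^{2,1}$, so $\tilde y_\tta$ is admissible). Using a control that is very large over a short final time subinterval $[T-\delta, T]$ and of the form pushing $y$ along $\clG^N$, plus continuity of the flow to handle the transient on $[0, T-\delta]$ where one just lets the uncontrolled (or forced) dynamics run, one drives $y(T)$ to within $\varepsilon$ of $\tilde y_\tta$, hence within $2\varepsilon$ of $y_\tta$; this is the standard argument that with controls unconstrained in a subspace containing a dense family one has approximate controllability. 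Finally one combines the two: a control valued in $\clG^N$ giving $\norm{y(T) - y_\tta}{H} < \varepsilon/2$ can be approximated, via the iterated convexification of the previous paragraph, by a control valued in $\clG^0$ (i.e.\ of the form $U^\diamond u$) giving $\norm{y(T) - y_\tta}{H} < \varepsilon$; a mollification in time upgrades $u \in L^\infty$ to $u \in \clC^\infty([0,T],\bbR^M)$ using once more continuity of the solution map.

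The main obstacle I expect is the interplay between the regularity required for Euler well-posedness and the regularity of the intermediate controls: unlike the Navier--Stokes / parabolic case, Euler has no smoothing, so one must carry enough spatial regularity through the whole convexification, which forces the $\clC^{2,1}$-constraint in the definition of the saturating set and requires checking that $B(h,h) \in \clC^{2,1}$ when $h \in \clC^{2,1}$ (hence the explicit form of $U$ in \eqref{U}) and, more delicately, that the Leray projection $\Pi$ behaves well on these H\"older--Lipschitz classes on a multiconnected domain — this is exactly where Assumption~\ref{A:domainMultiConn} and the cited result from~\cite{Temam01} enter. The second delicate point is making the relaxation/averaging lemma rigorous at this low regularity and in the presence of the physical boundary $\p\Omega$ (the oscillating controls $\zeta$ need not satisfy homogeneous boundary conditions beyond $\bfn\cdot\zeta\rest{\p\Omega}=0$), which is the genuinely new technical content compared to the boundaryless case treated in~\cite{AgraSary05,AgraSary08}.
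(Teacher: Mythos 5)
Your proposal follows the same Agrachev--Sarychev scheme as the paper: well-posedness of 2D Euler with the relevant regularity, a ``convexification/relaxation'' step realizing directions of $\clB_\clH(\clG^0)\clG^j$ by fast oscillations valued in $\clG^0$, and a finite-dimensional targeting step exploiting density of $\bigcup_j\clG^j$ in $H$. The overall architecture is correct and matches the paper's Sections~\ref{sS:proofACdensity}--\ref{sS:proofACiteration}. Two points are worth comparing in detail.

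First, a small algebraic slip in your extended system: with $w=y+\zeta$, the equation $\dot y+B(y,y)=f+\eta+B(\zeta,\zeta)-B(y,\zeta)-B(\zeta,y)$ transforms into $\dot w+B(w,w)=f+\eta+2B(\zeta,\zeta)+\dot\zeta$, not $f+\eta+\dot\zeta$; you would need a minus sign on $B(\zeta,\zeta)$ to cancel it. This is fixable (and in fact $B(\zeta,\zeta)\in\clU$ by \eqref{U}, so it can be absorbed into the control), but it should be stated precisely because the whole construction of $U$ in \eqref{U} exists exactly to absorb such quadratic self-interaction terms. The paper's implementation (Section~\ref{sS:proofACimitation}) avoids this bookkeeping by working directly with an explicit oscillating control $\fkF_{\beta,K}(u^j)$, where the key identity is the elementary $B(\varkappa_2,\varkappa_2)=2\sin^2(\pi Kt/T)(\beta^{-2}\varkappa_1-\varrho_n+\beta^2 B(\phi_n,\phi_n))$ combined with $2\sin^2=1-\cos(2\cdot)$ and the time-averaging Lemma~\ref{L:intsincosKTheta}; the change of variables used there is $Z=\overline z+\varkappa_2$ (a time-dependent shift), not the state-plus-control shift $w=y+\zeta$. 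Both routes implement the same relaxation idea, but the paper's is more explicit about the $K$-uniform estimates one must carry in $H$, $L^2$ of the vorticity, and $V'$ of $\dot Z$ to make the averaging lemma applicable, which is precisely where the low regularity and the boundary enter.

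Second, for the finite-dimensional targeting step, you propose letting the (forced) dynamics run on $[0,T-\delta]$ and then applying a large control on $[T-\delta,T]$ pushing along $\clG^N$. The paper instead constructs (Section~\ref{sS:proofACdensity}) a control $c=\dot\eta-P_{\widehat\clU_M}^{\clE_M^\top}f$ acting on the entire interval, where $\eta$ concentrates its activity near both endpoints via a parameter $\mu\gg1$, and---crucially---uses \emph{oblique} projections along $\clE_M^\top$ onto a fictitious actuator space $\clU_M\subset\clG^{\overline\jmath}$ chosen, via Lemma~\ref{L:obliproj}, so that $\|P_{\clE_M^\top}^{\clU_M}P_{\clE_M}^{\clE_M^\top}\|_{\clL(H)}$ is small. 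This device is what lets the argument work when the elements of $\bigcup_j\clG^j$ are not eigenfunctions of the Stokes operator: one cannot project orthogonally onto the fictitious actuators, so obliqueness is needed, and controlling the norm of the oblique projection is the new ingredient. Your sketch omits this; a naive ``pick $\tilde y_\tta\in\clG^N$ close to $y_\tta$ and ramp to it'' needs a way to control the growth of the component orthogonal to $\clG^N$, which for Euler (no smoothing, no dissipation) is not automatic. The oblique-projection lemma is the paper's answer to exactly that. Finally, you invoke a time-mollification of the control to upgrade $u\in L^\infty$ to $u\in\clC^\infty$; the paper avoids this extra step entirely because every control it builds (the interpolating $\dot\eta$, the oscillating $\fkF_{\beta,K}$) is already $\clC^\infty$ in time.
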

The proof is given in Section~\ref{S:proofT:approxcontrol}.
As we have mentioned above, this it is the first approximate controllability result for Euler equations concerning spatial domains as subsets~$\Omega\subset\bbR^2$, based on a saturating set as in Definition~\ref{D:saturVlin-vort}, where the recursion~\eqref{seqclG} does not require the elements of~$G_0$ to be regular equilibria of the unforced Euler equations~\cite[Thm.~6.1]{AgraSary08} or, more particularly, eigenfunctions of the vector Laplacian~\cite[Thm.~3]{AgraSary06}. The property of being an equilibrium and the high regularity of eigenfunctions can be helpful in easing/simplifying the proof of the result. Definition~\ref{D:saturVlin-vort} requires limited regularity of the actuators (intentionally avoiding requiring~$\clC^\infty$ regularity, again, in order to relax the definition of saturating set), thus not necessarily as regular as the eigenfunctions of the Laplacian; hence, we shall need to adapt some arguments. In particular, the intersection with~$\clH$, in~\eqref{BZH-clH}, is taken/needed to guarantee that the elements in each~$\clG^{j}$ are at least~``$\clH$-regular''; see~\eqref{seqclG}. This is because those elements will be used, within the proof, as auxiliary fictitious actuators, which must be regular enough to guarantee the existence and uniqueness of solutions under the corresponding auxiliary fictitious control forces.

Next, to illustrate the applicability of Theorem~\ref{T:approxcontrol}, we give an example of a saturating set in the case where the spatial domain is the unit disk
\[
\clD\coloneqq\{(x_1,x_2)\in\bbR^2\mid x_1^2+x_2^2<1\}.
\]
We consider, in polar coordinates, the set of functions
\begin{subequations}\label{satset-EG0}
\begin{align}\label{satset-vort}
&E\coloneqq\{r^{2k}\mid 0\le k\le 4\}{\,\textstyle\bigcup\,}\{r^{2(k+j)}\cos(k\theta),r^{2(k+j)}\sin(k\theta)\mid (j,k)\in\{0,1\}\!\times\!\{1,2,3\}\},\\
&\mbox{with}\quad (r,\theta)\in[0,1)\times[0,2\pi).\notag
\end{align}
thus with~$11$ elements. Then, we take a set of actuators including the vector fields the vorticities of which are the functions in~$E$, that is, now in Cartesian coordinates,
\begin{equation}\label{satset-vecfields}
G^0\coloneqq\{h\in (L^{2})^2\mid \bfn\cdot h\rest{\p\clD}=0\quad\mbox{and}\quad \curl h\in E\}.
\end{equation}
 \end{subequations}
The second contribution of this manuscript is the following. 
\begin{theorem}\label{T:saturating}
The set~$G_0$ in~\eqref{satset-EG0} is~$(H\bigcap\clC^{2,1}(\overline\clD))$-saturating and we have the inclusion~$G^0\bigcup\{B( h,h)\mid h\in G^0\}\subset H\bigcap\clC^{2,1}(\overline\clD)$.
\end{theorem}
The proof is given in Sections~\ref{S:proofT:saturating-vort-form} and~\ref{S:proofT:saturating-vort}. This is the first example in the literature concerning bounded smooth spatial domains~$\Omega\subset\bbR^2$ (with nonempty boundary). 

As a corollary it follows that~\eqref{evol-sys-U}, in the spatial domain~$\Omega=\clD$, is approximately controllable at time~$T$, by means of actuators in~$U=G^0\bigcup\{B( h,h)\mid h\in G^0\}\subset H\bigcap\clC^{2,1}(\overline\clD)$.

\section{Proof of Theorem~\ref{T:approxcontrol}}\label{S:proofT:approxcontrol}
 We present the proof in several steps.
We want to show approximate controllability by means of control forces taking values in the linear span of the set~$U$ of actuators~\eqref{U},
where~$G_0$ is a saturating set, leading to the increasing saturating sequence~$(\clG^j)_{j\in\bbN}$ as in Definition~\ref{D:saturVlin-vort}, with~$\clG^0=\linspan G_0$.
To simplify the exposition hereafter, we need to consider the more general form of~\eqref{evol-sys-U} as follows
\begin{align}\label{evol-sys-bfc}
&\dot y+ B(y,y) =f+\bfc, \qquad y(0,\cdot)=y_0,
\end{align}
 where~$\bfc$ is a general control forcing~$\bfc(t)\in H$. It is also convenient to introduce the following generalization of Definition~\ref{D:acon-U} allowing for a more general space~$\fkU$ of control forces.
\begin{definition}\label{D:acon-bfc}
Given~$T>0$, $ f\in L^\infty((0,T),H)$, and a subspace~$\fkU\subset L^\infty((0,T),H)$,  we say that system~\eqref{evol-sys-bfc} is $(T,\fkU)$-approximately controllable if for every given initial state~$y_0\in  \clC^{2,1}(\overline\Omega)\bigcap H$, every aim state~$y_\tta\in  \clC^{2,1}(\overline\Omega)\bigcap H$, and every~$\varepsilon>0$, we can find a control forcing~$\bfc\in \fkU$ such that the corresponding state satisfies
$\norm{y(T)-y_\tta}{H}\le \varepsilon$.
\end{definition}

We shall also use the following notation, for~$(m,k)\in\bbN\times\bbN$,
\begin{align}
     &\bfX_k\coloneqq\clC^{k,1}_H (\overline\Omega)\times  \clC^{1;k,1}_H([0,T]\times\overline\Omega)\times \clC^{0;k,1}_H ([0,T]\times\overline\Omega),\label{Xk}\\
     \mbox{with}\quad
    & \clC^{k,1}_H(\overline\Omega)\coloneqq H{\,\textstyle\bigcap\,}\clC^{k,1}(\overline\Omega),\notag\\  &\clC^{m;k,1}_H([0,T]\times\overline\Omega)\coloneqq  L^\infty((0,T),H){\,\textstyle\bigcap\,}  \clC^{m;k,1}([0,T]\times\overline\Omega).\notag
   \end{align}

Note that our goal (cf.~Thm.~\ref{T:approxcontrol}) is to prove that~\eqref{evol-sys-bfc} is $(T,\fkU)$-approximately controllable, for any apriori given~$T>0$, with~$\fkU= \clC^\infty([0,T],\clU)$, where~$\clU=\linspan U$. 
The proof is done in several  steps, within Sections~\ref{sS:proofACdensity}--\ref{sS:proofACiteration}, where a tuple
is assumed to be given, consisting of a time-horizon~$T>0$, an external force~$f$, an initial state~$y_0$, an aim state~$y_\tta$, and a constant~$\varepsilon>0$, satisfying
\begin{align}\label{fixedtuple}
&(T,f, y_0,y_\tta,\varepsilon)\in \bbR_+\times\clC^{\infty;2,1}_H([0,T]\times\overline\Omega)\times\clC^{2,1}_H(\overline\Omega)\times\clC^{2,1}_H(\overline\Omega)\times\bbR_+.
\end{align}

\subsection{Auxiliary results}\label{sS:proofACauxiliaty}
 We gather properties, for a class of systems extending~\eqref{evol-sys-bfc}, that we shall use later on. The class we consider is
  \begin{align}\label{evol-sys-bfcg}
&\dot y+ B(y+g_1,y+g_1) =g_2,\qquad y(0,\cdot)=\fky,
\end{align}
 with given functions~$g_1$ and~$g_2$. To simplify the exposition, the solution of~\eqref{evol-sys-bfcg} is denoted as~$y\eqqcolon\fkY(\fky;g_1,g_2)$, corresponding to a given tuple~$(\fky,g_1,g_2)$.

\subsubsection{Vorticity function}\label{ssS:vorticity} 
We shall use the following result from~\cite[Appx.~I, Prop.~1.4]{Temam01}, see also~\cite[Thm.~1]{AuchAlex01}.
\begin{lemma}\label{L:vect-vort-est}
Let~$\Omega$ be as in Assumption~\ref{A:domainMultiConn}. Then, the vorticity/curl operator~$y\mapsto \curl y\coloneqq-\frac{\p}{\p x_2}y_1+\frac{\p}{\p x_1}y_2$ maps~$H\bigcap W^{1,2}(\Omega)$ into~$L^2(\Omega)$ and
there exists a constant~$C>0$ such that
\begin{align}
\norm{y}{W^{1,2}(\Omega)}^2&\le C\left(\norm{y}{L^2(\Omega)}^2+\norm{\curl y}{L^2(\Omega)}^2\right),\quad\mbox{for all}\quad y\in H\,{\textstyle\bigcap}\, W^{1,2}(\Omega).\notag
\end{align}
\end{lemma}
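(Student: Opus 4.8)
The plan is to reduce the estimate to smooth solenoidal fields tangent to $\p\Omega$ and prove it there through a div--curl identity carrying a boundary curvature correction, then pass to the limit by density. The mapping property is immediate: if $y\in W^{1,2}(\Omega)$, then $\nabla^\perp\cdot y=\p_{x_1}y_2-\p_{x_2}y_1$ is a linear combination of entries of $\nabla y\in L^2(\Omega)$, hence belongs to $L^2(\Omega)$. For the inequality, I would first prove it for $y\in H$ that are additionally of class $\clC^\infty(\overline\Omega)$, and then extend it to arbitrary $y\in H\cap W^{1,2}(\Omega)$ using the density of such smooth solenoidal tangent fields in $H\cap W^{1,2}(\Omega)$ for the $W^{1,2}(\Omega)$-norm (a classical approximation fact; see, e.g., \cite{Temam01}).

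So let $y\in H$ be smooth. Expanding the squares gives the pointwise identity $|\nabla y|^2+2(\p_{x_1}y_1\,\p_{x_2}y_2-\p_{x_2}y_1\,\p_{x_1}y_2)=(\nabla\cdot y)^2+(\nabla^\perp\cdot y)^2$, and since $\nabla\cdot y=0$, integrating over $\Omega$ yields
\begin{align}
\norm{\nabla y}{L^2(\Omega)}^2=\norm{\nabla^\perp\cdot y}{L^2(\Omega)}^2-2\int_\Omega\bigl(\p_{x_1}y_1\,\p_{x_2}y_2-\p_{x_2}y_1\,\p_{x_1}y_2\bigr)\,\ed x.\notag
\end{align}
The cross term is a divergence, $\p_{x_1}y_1\,\p_{x_2}y_2-\p_{x_2}y_1\,\p_{x_1}y_2=\p_{x_1}(y_1\p_{x_2}y_2)-\p_{x_2}(y_1\p_{x_1}y_2)$, so by the divergence theorem and the relation $\bfn_1\p_{x_2}-\bfn_2\p_{x_1}=\p_\tau$ (the arclength derivative along $\p\Omega$, $\tau$ being the unit tangent), together with an integration by parts along the closed boundary curves, the integral $\int_\Omega(\p_{x_1}y_1\,\p_{x_2}y_2-\p_{x_2}y_1\,\p_{x_1}y_2)\,\ed x$ equals $\tfrac12\int_{\p\Omega}(y_1\p_\tau y_2-y_2\p_\tau y_1)\,\ed s$. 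On $\p\Omega$, the tangency $\bfn\cdot y\rest{\p\Omega}=0$ allows us to write $y=(y\cdot\tau)\tau$; inserting this and using the Frenet relation $\tfrac{\ed}{\ed s}\bfn=-\kappa\tau$, the integrand collapses pointwise to $y_1\p_\tau y_2-y_2\p_\tau y_1=-\kappa|y|^2$ on $\p\Omega$, where $\kappa$ is the curvature of $\p\Omega$ (up to sign), a bounded function since $\p\Omega$ is $\clC^\infty$ and of finite length. Therefore
\begin{align}\label{E:aux-vort-id}
\norm{\nabla y}{L^2(\Omega)}^2=\norm{\nabla^\perp\cdot y}{L^2(\Omega)}^2+\int_{\p\Omega}\kappa\,|y|^2\,\ed s.
\end{align}

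To finish, I would absorb the boundary term. By the trace inequality $\norm{y}{L^2(\p\Omega)}^2\le\e\norm{\nabla y}{L^2(\Omega)}^2+C_\e\norm{y}{L^2(\Omega)}^2$, valid for every $\e>0$ on the smooth bounded domain $\Omega$, and since $\norm{\kappa}{L^\infty(\p\Omega)}<\infty$, choosing $\e$ with $\e\norm{\kappa}{L^\infty(\p\Omega)}\le\tfrac12$ and substituting into \eqref{E:aux-vort-id} gives $\tfrac12\norm{\nabla y}{L^2(\Omega)}^2\le\norm{\nabla^\perp\cdot y}{L^2(\Omega)}^2+C_\e\norm{\kappa}{L^\infty(\p\Omega)}\norm{y}{L^2(\Omega)}^2$. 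Adding $\norm{y}{L^2(\Omega)}^2$ to both sides and recalling $\norm{y}{W^{1,2}(\Omega)}^2=\norm{y}{L^2(\Omega)}^2+\norm{\nabla y}{L^2(\Omega)}^2$ yields the claimed inequality, with a suitable constant $C$, for smooth $y\in H$. Finally, given an arbitrary $y\in H\cap W^{1,2}(\Omega)$, I would take smooth $y_n\in H$ with $y_n\to y$ in $W^{1,2}(\Omega)$; then $\norm{y_n}{L^2(\Omega)}\to\norm{y}{L^2(\Omega)}$ and $\norm{\nabla^\perp\cdot y_n}{L^2(\Omega)}\to\norm{\nabla^\perp\cdot y}{L^2(\Omega)}$, so passing to the limit in the inequality for $y_n$ proves it for $y$.

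The main obstacle is the curvature identity \eqref{E:aux-vort-id}: one must carefully convert the tangency condition into the boundary integral through the Frenet frame of each component of $\p\Omega$, keeping track of orientations and signs, and verify that the multiply connected topology permitted by Assumption~\ref{A:domainMultiConn} causes no trouble --- it does not, since the integration by parts of the cross term is a local computation on $\Omega$. Alternatively, as announced in the text, one may simply invoke \cite[Appx.~I, Prop.~1.4]{Temam01} or \cite[Thm.~1]{AuchAlex01}.
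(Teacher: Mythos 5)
Your argument is correct, but it is not the paper's: the paper offers no proof of Lemma~\ref{L:vect-vort-est} at all, it simply quotes the result from \cite[Appx.~I, Prop.~1.4]{Temam01} (see also \cite[Thm.~1]{AuchAlex01}), whereas you reprove the cited inequality from scratch. Your route is the classical Friedrichs/Gaffney-type computation: the pointwise identity $|\nabla y|^2+2(\p_{x_1}y_1\,\p_{x_2}y_2-\p_{x_2}y_1\,\p_{x_1}y_2)=(\nabla\cdot y)^2+(\nabla^\perp\cdot y)^2$, the null-Lagrangian structure of the cross term, the collapse of the boundary integrand to $\kappa|y|^2$ via the tangency condition, and the absorption through the $\varepsilon$-trace inequality are all sound; the signs are consistent (the ``up to sign'' convention is harmless since only $\norm{\kappa}{L^\infty(\p\Omega)}$ enters), and multiple connectivity indeed causes no trouble. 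What your approach buys is a self-contained, elementary proof; what the citation buys is not having to justify the one step you pass over quickly, namely the density, in the $W^{1,2}(\Omega)$-norm, of smooth solenoidal fields tangent to $\p\Omega$ in $H\bigcap W^{1,2}(\Omega)$. That density is true but not immediate on multiply connected domains (one typically argues via stream functions constant on each $\Gamma_k$, or via the same regularity theory of \cite{Temam01} you are trying to bypass), and it is genuinely needed here, since your boundary integration by parts uses traces of $\nabla y$ that a general $W^{1,2}$ field does not possess. So either cite that approximation result precisely or supply its proof; with that point settled, your derivation is complete and is essentially the standard proof of the result the paper imports.
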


\subsubsection{On existence and uniqueness of solutions}
\label{ssS:solutionsExiUni}
Recall the spaces~$\bfX_k$ as in~\eqref{Xk}. Solutions are understood as follows.
    \begin{theorem}\label{T:KatoShiri}
 Let~$\Omega$ be  as  in Assumption~\ref{A:domainMultiConn} and~$(\fky,0,g_2)\in \bfX_k$, $k\in\bbN_+$. Then, there exists one, and only one, solution~$y=\fkY(\fky;0,g_2)$ for~\eqref{evol-sys-bfcg} (with~$g_1=0$) such
that 
 \begin{align}\label{reg.EulerSol}
 \{y,\dot y,\p_{x_1}y,\p_{x_2}y\}\subset \clC^0([0,T],\clC^{k-1}(\overline\Omega)).
   \end{align}
 \end{theorem}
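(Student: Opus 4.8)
\textbf{Proof plan for Theorem~\ref{T:KatoShiri}.}
The plan is to recast the Euler system in vorticity form and then invoke the classical well-posedness theory for the transport equation together with elliptic regularity for the Biot--Savart reconstruction. First I would set~$\omega\coloneqq\nabla^\perp\cdot y$ and recall that, for $y$ a divergence-free field tangent to the boundary, the map $y\mapsto\omega$ together with the circulation data $\{\oint_{\Gamma_j} y\cdot\ed s\}_{j}$ determines $y$ uniquely; moreover, by Lemma~\ref{L:vect-vort-est} (and its higher-order analogues, obtained by differentiating and reapplying the same div-curl estimate), one controls $\|y\|_{\clC^{k}(\overline\Omega)}$ in terms of $\|\omega\|_{\clC^{k-1}(\overline\Omega)}$ plus lower-order norms of $y$. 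Applying $\nabla^\perp\cdot$ to~\eqref{evol-sys-bfcg} with $g_1=0$ turns it into the active scalar transport equation $\dot\omega + (y\cdot\nabla)\omega = \nabla^\perp\cdot g_2$, with $y$ recovered from $\omega$ via the (domain-dependent) Biot--Savart operator and with conserved circulations $\oint_{\Gamma_j} y\cdot\ed s$ fixed by the initial datum $\fky$ (here one uses that $g_2\in H$, so it carries no circulation, hence the line integrals of $y(t)$ around each boundary component are constant in $t$).

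The core of the argument is then the standard Kato-type fixed-point/contraction scheme for this transport equation. I would work on the space $\clC^0([0,T],\clC^{k-1,1}(\overline\Omega))$ for the vorticity (matching the $\clC^{k,1}$ regularity of the data in $\bfX_k$): given a velocity field $v$ in an appropriate ball, solve the linear transport equation $\dot\omega + (v\cdot\nabla)\omega = \nabla^\perp\cdot g_2$ by the method of characteristics — the flow map $\Phi^v_t$ of $v$ is a volume-preserving $\clC^{k-1,1}$ diffeomorphism of $\overline\Omega$ (it preserves $\overline\Omega$ precisely because $v$ is tangent to $\p\Omega$, which is where Assumption~\ref{A:domainMultiConn} and the boundary condition enter) — then reconstruct a new velocity $\widetilde y$ from $\omega$ by Biot--Savart with the prescribed circulations, and show $v\mapsto\widetilde y$ is a contraction on a short time interval in the lower-norm topology while staying bounded in the higher-norm topology. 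The usual commutator/Gronwall estimates give an a priori bound $\|\omega(t)\|_{\clC^{k-1,1}}\le \Psi(t)$ on the local solution; because the problem is two-dimensional, $\|\omega(t)\|_{L^\infty}$ is transported exactly (up to the forcing term, which is bounded), so the a priori bound does not blow up in finite time and the local solution extends to all of $[0,T]$. Finally, recovering the velocity's time regularity: $\dot y$ is obtained from the equation itself, $\dot y = -\Pi((y\cdot\nabla)y) + g_2$, and since $y, \p_{x_1}y,\p_{x_2}y\in\clC^0([0,T],\clC^{k-1}(\overline\Omega))$ one reads off $\dot y\in\clC^0([0,T],\clC^{k-1}(\overline\Omega))$, giving~\eqref{reg.EulerSol}; uniqueness follows from the same contraction estimate (or directly from a Gronwall argument on the difference of two solutions, using the log-Lipschitz/Lipschitz bounds available at this regularity level).

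The main obstacle I expect is the presence of the physical boundary $\p\Omega$ and the multi-connectedness of $\Omega$: one must verify carefully that the Biot--Savart reconstruction with prescribed circulations is well-defined and bounded $\clC^{k-1,1}(\overline\Omega)\to\clC^{k,1}(\overline\Omega)$ (this is elliptic regularity for a Neumann-type problem for the stream function on a multiply connected domain, with the harmonic-conjugate/circulation degrees of freedom treated correctly — exactly the content behind Lemma~\ref{L:vect-vort-est} and the cited~\cite{Temam01}), and that the characteristic flow genuinely preserves $\overline\Omega$ so that the transport scheme closes. On a boundaryless manifold (as in~\cite{AgraSary05,AgraSary06,AgraSary08}) these points are automatic; here they require the hypotheses of Assumption~\ref{A:domainMultiConn}. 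Once this functional-analytic bookkeeping is in place, the rest is the classical 2D Euler local-well-posedness argument with a forcing term, globalized by the 2D transport-of-vorticity bound.
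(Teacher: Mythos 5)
The paper itself does not prove Theorem~\ref{T:KatoShiri}; it cites Ebin--Marsden \cite[Sects.~15.1--15.3]{EbinMars70}, the theorem in \cite{Kato67} for $k=1$, and a bootstrap argument from Shirikyan's lecture notes for $k>1$. Your sketch is precisely the Kato-type vorticity-transport plus Biot--Savart iteration that the $k=1$ reference is built on, with the higher-norm/lower-norm two-topology contraction and the 2D vorticity bound for globalization; this matches the route the paper points to, so in outline you are on target.

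One step, however, is not correct as written. You claim that since $g_2(t)\in H$ it ``carries no circulation,'' so the boundary circulations $\oint_{\Gamma_j}y(t)\cdot\ed s$ are constant in $t$. That is true only when $\Omega$ is simply connected. Under Assumption~\ref{A:domainMultiConn} with $\varkappa\ge2$ boundary components, $H$ contains the $(\varkappa-1)$-dimensional space of harmonic vector fields (curl-free, divergence-free, tangent to $\p\Omega$), and these carry nonzero circulation; membership in $H$ does not annihilate the circulation of $g_2$. By Kelvin's theorem for the forced equation, the circulations instead satisfy the linear ODE $\tfrac{\rmd}{\rmd t}\oint_{\Gamma_j}y\cdot\ed s=\oint_{\Gamma_j}g_2(t)\cdot\ed s$, so they are determined by $\fky$ and $g_2$ but are in general time-dependent, and the Biot--Savart reconstruction in your iteration must be carried out with these time-varying circulation data. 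This is a repairable bookkeeping slip rather than a conceptual obstruction, but it is exactly the multiply-connected subtlety the paper acknowledges when it remarks that the Shirikyan bootstrap it cites is stated only for simply connected domains.
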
 
This result follows from~\cite[Sects.~15.1--15.3]{EbinMars70} for general two-dimensional manifolds with either empty or smooth boundary.

Further, in case~$k=1$ the statement is also a straightforward corollary of the Theorem we find in~\cite[Intro.]{Kato67} and in case~$k>1$ the result can be shown by a bootstrapping-like argument as in~\cite[Sect.~2.3, proof of Thm.~2.7, step~3]{Shirikyan08-lect} (in there, shown for the case of simply-connected~$\Omega$).

We refer also to~\cite[Thm.~1]{BourguignonBrezis74}, including the case of  higher dimensional spatial domains~$\Omega\subset\bbR^d$, $d>2$, but with solutions defined locally in the temporal variable, that is, not necessarily in the given entire time interval~$[0,T]$. For additional literature, on solutions for the Euler equations, we refer the reader to~\cite{Yudovich95,Yudovich05,Constantin07,Wolibner33,LFilhoNLopesTadmor00,Temam75,ClopeauMikRob98} and references therein.

\begin{corollary}   
Given~$\Omega$ as in Assumption~\ref{A:domainMultiConn} and~$(\fky,g_1,g_2)\in  \bfX_2$, there exists one, and only one, solution~$y=\fkY(\fky;g_1,g_2)$ for~\eqref{evol-sys-bfcg} such
that $\{y,\dot y,\p_{x_1}y,\p_{x_2}y\}\subset \clC^0([0,T]\times\clC^1(\overline\Omega)).$
\end{corollary}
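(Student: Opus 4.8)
The plan is to reduce the case of general $g_1 \in \clC^{1;2,1}_H([0,T]\times\overline\Omega)$ to the already-established case $g_1 = 0$ of Theorem~\ref{T:KatoShiri}, by absorbing the drift $g_1$ into a change of unknown. Writing $w \coloneqq y + g_1$, equation~\eqref{evol-sys-bfcg} becomes $\dot w + B(w,w) = g_2 + \dot g_1$ with initial datum $w(0,\cdot) = \fky + g_1(0,\cdot)$. First I would check that this transformed data lies in $\bfX_k$ with $k=2$: the initial state $\fky + g_1(0,\cdot)$ belongs to $\clC^{2,1}_H(\overline\Omega)$ because $\fky \in \clC^{2,1}_H(\overline\Omega)$ and $g_1(0,\cdot) \in \clC^{2,1}(\overline\Omega) \cap H$ (as $g_1 \in \clC^{1;2,1}_H$ and evaluation at $t=0$ preserves the spatial regularity and the membership in $H$); and the forcing $g_2 + \dot g_1$ belongs to $\clC^{0;2,1}_H([0,T]\times\overline\Omega)$ because $g_2$ does by hypothesis and $\dot g_1 = \frac{\p}{\p t}g_1 \in \clC^{0}([0,T],\clC^{2,1}(\overline\Omega)) \cap L^\infty((0,T),H)$, again since $g_1 \in \clC^{1;2,1}_H$. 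Hence $(\fky + g_1(0,\cdot),\,0,\,g_2 + \dot g_1) \in \bfX_2$.

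Next I would apply Theorem~\ref{T:KatoShiri} with $k=2$ to obtain a unique solution $w = \fkY(\fky + g_1(0,\cdot);\,0,\,g_2+\dot g_1)$ satisfying $\{w,\dot w,\p_{x_1}w,\p_{x_2}w\} \subset \clC^0([0,T],\clC^{1}(\overline\Omega))$, and then set $y \coloneqq w - g_1$. Since $g_1$ and its first spatial derivatives and its time derivative all lie in $\clC^0([0,T],\clC^{1}(\overline\Omega))$ (indeed in $\clC^0([0,T],\clC^{2,1}(\overline\Omega))$, which embeds continuously into $\clC^0([0,T],\clC^{1}(\overline\Omega))$), the function $y$ inherits the same regularity: $\{y,\dot y,\p_{x_1}y,\p_{x_2}y\} \subset \clC^0([0,T],\clC^{1}(\overline\Omega))$. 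A direct substitution shows $y$ solves~\eqref{evol-sys-bfcg} with initial datum $y(0,\cdot) = w(0,\cdot) - g_1(0,\cdot) = \fky$, so existence is settled. For uniqueness, if $\tilde y$ is any solution of~\eqref{evol-sys-bfcg} with the stated regularity, then $\tilde w \coloneqq \tilde y + g_1$ has regularity $\{\tilde w,\dot{\tilde w},\p_{x_1}\tilde w,\p_{x_2}\tilde w\} \subset \clC^0([0,T],\clC^{1}(\overline\Omega))$ and solves $\dot{\tilde w} + B(\tilde w,\tilde w) = g_2 + \dot g_1$ with $\tilde w(0,\cdot) = \fky + g_1(0,\cdot)$; by the uniqueness half of Theorem~\ref{T:KatoShiri} we get $\tilde w = w$, hence $\tilde y = y$.

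The only genuinely non-routine point is verifying that the change of unknown $w = y + g_1$ is legitimate at the stated regularity level — that is, that both $y \mapsto w$ and $w \mapsto y$ send the class of functions in which we seek solutions into itself, and that the PDE and the initial condition transform as claimed with no loss of derivatives. This works precisely because $g_1$ is assumed one degree more regular in space ($\clC^{2,1}$) than the target output regularity ($\clC^{1}$), so that the subtraction $w - g_1$ does not degrade the conclusion, and one degree regular in time, so that $\dot g_1$ is an admissible ($\clC^{0;2,1}_H$) forcing term; I would make sure to point out explicitly that it is exactly the gap between $\bfX_2$ and the output space $\clC^0([0,T],\clC^1(\overline\Omega))$ in Theorem~\ref{T:KatoShiri} that makes this reduction consistent. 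Everything else is a one-line substitution into the quadratic term $B(y+g_1,y+g_1)$, which is already written in the form needed in~\eqref{evol-sys-bfcg}.
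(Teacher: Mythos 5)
Your proposal is correct and takes essentially the same approach as the paper: both perform the change of unknown $z := y + g_1$ to reduce to the case $g_1 = 0$ of Theorem~\ref{T:KatoShiri} with $k = 2$, and then recover $y = z - g_1$, using that $g_1$ and its first time and spatial derivatives all lie in $\clC^0([0,T],\clC^1(\overline\Omega))$. Your write-up is slightly more careful than the paper's (you explicitly verify $(\fky + g_1(0),\,0,\,g_2+\dot g_1)\in\bfX_2$ and spell out the uniqueness half), but the underlying argument is identical.
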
  
\begin{proof} 
Let~$z\coloneqq y+g_1=\fkY(\fky+g_1(0); 0,g_2+\dot g_1)$. Note that~$\fky+g_1(0)\in\clC^{1,1}(\overline\Omega)^2$ and~$g_2+\dot g_1\in \clC^{0;2,1}([0,T],\clC^1(\overline\Omega))$. Then, by  Theorem~\ref{T:KatoShiri} we find
$\{z,\dot z,\p_{x_1}z,\p_{x_2}z\}\subset \clC^0([0,T],\clC^1(\overline\Omega)).$
Finally, observe that~$y=z-{g_1}$ and~$\{{g_1},\dot g_1,\p_{x_1}{g_1},\p_{x_2}{g_1}\}\subset \clC^0([0,T],\clC^1(\overline\Omega))$ . 
 \end{proof}

\subsubsection{The Stokes operator under Lions boundary conditions}\label{ssS:StokesLionsA}
We introduce the space
\begin{equation}\label{V}
 V\coloneqq H{\,\textstyle\bigcap\,}W^{1,2}
\end{equation}
and, with~$y(x)\eqqcolon(y_1(x),y_2(x))$ and~$x\eqqcolon (x_1,x_2)\in\Omega$, we introduce the~$\curl$ operator
\begin{equation}\label{curl}
\curl\colon V\to L^2, \qquad \curl y \coloneqq\frac{\p y_1}{\p x_2}-\frac{\p y_2}{\p x_1}.
\end{equation}
and the shifted Stokes operator~$A=\Id-\Pi\Delta$, under Lions boundary conditions, defined as
\begin{subequations}\label{StokesOperLions}
\begin{align}
&\langle A y,z\rangle_{V',V}\coloneqq(\curl  y,\curl \cdot z)_{L^2}+(y,z)_{H},\quad\mbox{for all}\quad  (y,z)\in V\times V,
\intertext{which we assume endowed with the scalar product}
& (y,z)_V\coloneqq\langle A y,z\rangle_{V',V},\qquad (y,z)\in V\times V.\label{scalarprodV}
\intertext{The domain of~$A$ is}
&\rmD(A)\coloneqq\{h\in H\mid Ah\in H\}=\{h\in H{\,\textstyle\bigcap\,}W^{2,2}\mid (\curl h)\rest{\p\Omega}=0\}.
\intertext{Since~$A^{-1}\colon H\to H$ is a compact operator, we can fix a complete system of orthonormal eigenfunctions~$e_k$ with associated eigenvalues~$\lambda_k$ (repeated accordingly to their multiplicity),}
&Ae_k=\lambda_ke_k,\qquad1\le\lambda_k\le\lambda_{k+1},\qquad \lim\limits_{k\to\infty}\lambda_k=\infty,\qquad k\in\bbN_+.\label{eigStokesLions}
\end{align}
\end{subequations}

Next, we can also define the fractional powers of~$A^{\gamma}$ as
\begin{align}
A^{\gamma}z\coloneqq{\textstyle \sum\limits_{k=1}^{\infty}}\lambda_k^{\gamma}z_k e_k,\notag
\end{align}
with domains~$\rmD(A^\gamma)$ endowed with the scalar product
\begin{align}
(z,w)_{\rmD(A^{\gamma})}={\textstyle \sum\limits_{k=1}^{\infty}}\lambda_k^{2\gamma}z_kw_k,\notag
\end{align}
for~$z=\sum_{k=1}^{\infty}z_ke_k$ and~$w=\sum_{k=1}^{\infty}w_ke_k$. In other words, $z\in\rmD(A^{\gamma})$ if, and only if, ~$A^{\gamma}z\in H$.

Note that~$H=\rmD(A^0)$, $V=\rmD(A^\frac12)$, $\rmD(A)=\rmD(A^1)$, and~$\rmD(A^\gamma)'=\rmD(A^{-\gamma})$.

\subsubsection{On the nonlinear term}\label{ssS:propB}
 For smooth functions, let us introduce the form
 \begin{align}\label{Trilin-poisson}
&\bfb(y,z,w)=\left\langle B(y,z),w\right\rangle\coloneqq(B(y,z),w)_H, \qquad \{y,z,w\}\subset C^1(\overline\Omega){\textstyle\bigcap H},
\end{align}
which can be extended continuously to larger spaces, as indicated by the following estimates.
\begin{lemma}\label{L:estTrilin-poisson}
The trilinear form~\eqref{Trilin-poisson} satisfies
\begin{align}
&\bfb(y,z,w)=-\bfb(y,w,z) \quad\mbox{and}\quad\bfb(y,z,z)=0,\quad\mbox{for all}\quad \{y,z,w\}\subset C^1(\overline\Omega)^2{\textstyle\bigcap H}.\label{asymTrilin}
\end{align}
Further, for a suitable constant~$C_\bfb>0$, we have the estimates
\begin{subequations}
\begin{align}
 \norm{\bfb(y,z,w)}{\bbR}=\norm{\bfb(y,w,z)}{\bbR}&\le C_\bfb\norm{y}{L^2}\norm{z}{\clC^{1}(\overline\Omega)}\norm{w}{L^2},\label{estTrilin-h.C1.h}\\
 \norm{\bfb(y,z,w)}{\bbR}=\norm{\bfb(y,w,z)}{\bbR}&\le C_\bfb\norm{y}{L^4}\norm{z}{V}\norm{w}{L^4},\label{estTrilin-l4.V.l4}\\
\norm{\bfb(y,z,w)+\bfb(z,y,w)}{\bbR}&\le C_\bfb \norm{(y,\nabla y)}{\clC^1}\norm{(z,\nabla y)}{\clC^1}\norm{w}{V'},\label{estTrilin-C1.C2.V'}
\end{align}
\end{subequations}
for all triples~$(y,z,w)\in H\times H\times H$ so that the corresponding right-hand side is bounded. In~\eqref{estTrilin-C1.C2.V'}, $V'$ is the continuous dual of~$V$, where the latter is endowed with the norm~$\norm{\Bigcdot}{V}$ associated with the scalar product in~\eqref{scalarprodV}.
\end{lemma}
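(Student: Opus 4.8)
\textbf{Plan of proof for Lemma~\ref{L:estTrilin-poisson}.}
The starting point is the classical integration-by-parts identity. For $y\in H$ (so $\nabla\cdot y=0$ and $\bfn\cdot y\rest{\p\Omega}=0$) and smooth $z,w$, we have $\bfb(y,z,w)=\int_\Omega (y\cdot\nabla z)\cdot w\,\ed x=\sum_{i,j}\int_\Omega y_j\,(\p_{x_j}z_i)\,w_i\,\ed x$; integrating by parts in $x_j$, the boundary term $\int_{\p\Omega}(\bfn\cdot y)\,z_i w_i$ vanishes by the boundary condition on $y$, and the term $\int_\Omega(\nabla\cdot y)z_iw_i$ vanishes by incompressibility, leaving $\bfb(y,z,w)=-\sum_{i,j}\int_\Omega y_j z_i\,\p_{x_j}w_i=-\bfb(y,w,z)$. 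Taking $w=z$ gives $\bfb(y,z,z)=0$. Note also that since $\Pi$ is an orthogonal projection and $w\in H$, one has $(B(y,z),w)_H=((y\cdot\nabla)z,w)_{L^2}$, so the Leray projection plays no role in the trilinear form; this is the observation that reduces everything to elementary $L^2$ computations on $\Omega$. This settles \eqref{asymTrilin}, and I would then extend the identity to the stated larger spaces by density once the continuity estimates below are in hand.

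For \eqref{estTrilin-h.C1.h}, I would bound directly: $|\bfb(y,z,w)|\le\int_\Omega |y|\,|\nabla z|\,|w|\,\ed x\le\norm{\nabla z}{\clC^0(\overline\Omega)}\norm{y}{L^2}\norm{w}{L^2}\le C_\bfb\norm{y}{L^2}\norm{z}{\clC^1(\overline\Omega)}\norm{w}{L^2}$, using Cauchy--Schwarz; the second equality in the line follows from the antisymmetry \eqref{asymTrilin} already proved (after which the roles of $z$ and $w$ are swapped and one uses the same bound). For \eqref{estTrilin-l4.V.l4}, use H\"older with exponents $(4,2,4)$: $|\bfb(y,z,w)|\le\norm{y}{L^4}\norm{\nabla z}{L^2}\norm{w}{L^4}\le C_\bfb\norm{y}{L^4}\norm{z}{V}\norm{w}{L^4}$, where $\norm{\nabla z}{L^2}\le C\norm{z}{V}$ because on $V=H\cap W^{1,2}$ the norm $\norm{\Bigcdot}{V}$ from \eqref{scalarprodV}, namely $(\norm{\nabla^\perp\cdot z}{L^2}^2+\norm{z}{H}^2)^{1/2}$, is equivalent to the $W^{1,2}$ norm by Lemma~\ref{L:vect-vort-est}; the equality with $\bfb(y,w,z)$ again comes from \eqref{asymTrilin}.

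The estimate \eqref{estTrilin-C1.C2.V'} is the one requiring a little thought, because a naive bound on $\bfb(y,z,w)+\bfb(z,y,w)$ would cost one derivative too many on $w$, whereas we want only $\norm{w}{V'}$. The trick is to integrate by parts so that all derivatives land on $y$ and $z$: write $\bfb(y,z,w)+\bfb(z,y,w)=\int_\Omega\big((y\cdot\nabla)z+(z\cdot\nabla)y\big)\cdot w\,\ed x$, and then transfer one derivative off $w$ onto the product $y\otimes z$-type terms using the divergence-free and boundary conditions on $w$ — that is, test the $H$-valued field $\Pi\big((y\cdot\nabla)z+(z\cdot\nabla)y\big)$ against $w\in V$ via the $V'$--$V$ pairing after rewriting $(y\cdot\nabla)z+(z\cdot\nabla)y=\nabla(y\cdot z)-y\times(\nabla^\perp\cdot z)-z\times(\nabla^\perp\cdot y)$ in two dimensions (the standard vector identity), or more simply observe that $\bfb(y,z,w)+\bfb(z,y,w)=-\bfb(y,w,z)-\bfb(z,w,y)=-\int_\Omega w\cdot\big((y\cdot\nabla)z+(z\cdot\nabla)y\big)$ is symmetric enough that one integration by parts in the $w$-variable produces $\int_\Omega \big(\text{[first-order data in }y,z]\big)\,(\nabla w)$ paired dually so that only $\norm{w}{V}$ on the test side, i.e.\ $\norm{\Bigcdot}{V'}$ on the data side, is needed. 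Concretely, after the integration by parts one is left with an integrand of the form (products and first derivatives of $y$ and $z$)$\cdot w$ with no derivative on $w$, which is dominated by $\norm{(y,\nabla y)}{\clC^1}\norm{(z,\nabla z)}{\clC^1}$ times $\norm{w}{L^1}\le C\norm{w}{L^2}\le C'\norm{w}{V}$ — but to get $\norm{w}{V'}$ rather than $\norm{w}{L^2}$ one keeps one derivative on $w$ and absorbs it as the $V$-pairing, using that $\norm{\nabla^\perp\cdot z}{L^2}$-type quantities control the $V$-norm by Lemma~\ref{L:vect-vort-est}; the coefficients then involve $(z,\nabla z)$ in $\clC^1$, matching \eqref{estTrilin-C1.C2.V'} up to relabelling the second $\nabla y$ (which I read as a typo for $\nabla z$). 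The main obstacle, then, is bookkeeping: arranging the integration by parts in \eqref{estTrilin-C1.C2.V'} so that the derivative count on $w$ is exactly one and the residual coefficients are precisely first-order in $y$ and $z$, and checking that all boundary terms vanish — they do, using $\bfn\cdot y=\bfn\cdot z=0$ on $\p\Omega$ and, where needed in the $V'$ pairing, the density of smooth solenoidal fields in $V$. Everything else is Cauchy--Schwarz, H\"older, and Lemma~\ref{L:vect-vort-est}.
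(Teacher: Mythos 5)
Your treatment of \eqref{asymTrilin}, \eqref{estTrilin-h.C1.h}, and \eqref{estTrilin-l4.V.l4} matches the paper: same integration by parts (boundary term killed by $\bfn\cdot y\rest{\p\Omega}=0$, interior term by $\nabla\cdot y=0$), same observation that the Leray projection drops out when pairing against $w\in H$, same H\"older/$\clC^1$ bounds, and the same use of Lemma~\ref{L:vect-vort-est} for the $V$–$W^{1,2}$ equivalence. You also correctly read $(z,\nabla y)$ in \eqref{estTrilin-C1.C2.V'} as a typo for $(z,\nabla z)$, as the paper's appendix confirms.

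The gap is in \eqref{estTrilin-C1.C2.V'}, and it is not just bookkeeping. Your plan is to integrate by parts so that ``the integrand has no derivative on $w$,'' and then bound via $\norm{w}{L^1}\le C\norm{w}{L^2}\le C'\norm{w}{V}$. But $\norm{\Bigcdot}{V}$ is a \emph{stronger} norm than $\norm{\Bigcdot}{H}$, and $\norm{\Bigcdot}{V'}$ is \emph{weaker}; an $L^2$ or $V$-norm bound on $w$ does not imply a $V'$-norm bound, so this chain points the wrong way. The later sentence, ``to get $\norm{w}{V'}$ rather than $\norm{w}{L^2}$ one keeps one derivative on $w$,'' is also backwards: spending a derivative on $w$ would cost $\norm{w}{V}$, which is even stronger. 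To land on $\norm{w}{V'}$ you must \emph{not} touch $w$ at all — you must show that the \emph{other} factor, $B(y,z)+B(z,y)$, lies in $V$ with a controlled $V$-norm, and then invoke the duality $\left|\langle g,w\rangle_{V,V'}\right|\le\norm{g}{V}\norm{w}{V'}$. That is exactly what the paper does: by Lemma~\ref{L:vect-vort-est}, $\norm{g}{V}\lesssim\norm{g}{H}+\norm{\nabla^\perp\cdot g}{L^2}$, and the $L^2$-term is easy. The key computation your proposal never performs is the 2D vorticity cancellation
\begin{align}
\nabla^\perp\cdot\bigl(B(y,z)+B(z,y)\bigr)&=y\cdot\nabla(\nabla^\perp\cdot z)+z\cdot\nabla(\nabla^\perp\cdot y),\notag
\end{align}
which holds because (i) the Leray complement is curl-free so $\nabla^\perp\cdot\Pi=\nabla^\perp\cdot$, and (ii) when you take $\nabla^\perp\cdot$ of $(y\cdot\nabla)z+(z\cdot\nabla)y$, the quadratic cross-derivative terms regroup into multiples of $\nabla\cdot y$ and $\nabla\cdot z$ and vanish. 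Without this identity the curl of $B(y,z)+B(z,y)$ appears to contain uncontrolled second-order combinations, and the estimate doesn't close with the stated right-hand side. You gesture at the vector identity $(y\cdot\nabla)z+(z\cdot\nabla)y=\nabla(y\cdot z)-y^\perp\omega_z-z^\perp\omega_y$, which is a route to the same cancellation, but you never carry it through to an $L^2$ bound on the curl; you instead pivot back to integrating by parts on $w$, which, as above, cannot produce a $V'$-norm. Fix: drop the integration-by-parts-on-$w$ strategy for \eqref{estTrilin-C1.C2.V'}, compute $\nabla^\perp\cdot(B(y,z)+B(z,y))$ explicitly, and apply Lemma~\ref{L:vect-vort-est} to the resulting quantity.
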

The proof is given in the Appendix, Section~\ref{Apx:proofL:estTrilin-poisson}.   
   
\subsubsection{Particular basis for the subspaces~$\clG^j$}
\label{ssS:findimGj}
We shall use the following.
\begin{lemma}\label{L:findimGj} 
The subspaces~$\clG^j\subseteq\clH\subseteq H$ defined as in~\eqref{seqclG} are finite-dimensional. Furthermore, there exists a subset~$\{\Psi_k\mid 1\le k\le \overline m_{j}+n_j\}\subset \clG^{j+1}$ satisfying
\begin{align}\notag
&\clG^{j+1}=\linspan\{\Psi_k\mid 1\le k\le \overline m_{j}+n_j\},\quad \clG^{j}=\linspan\{\Psi_k\mid 1\le k\le \overline m_{j}\},\quad\dim\clG^{j}=\overline m_j,\notag\\
\intertext{with~$\overline m_{j}\in\bbN$ and~$n_j\in\bbN$. Further, if~$n_j\ge1$,}
&\hspace{2em}\Psi_{\overline m_{j}+i}=\clB(\psi_i)\phi_i,\quad\mbox{for}\quad 1\le i\le n_j,\quad\mbox{for some}\quad(\psi_i,\phi_i)\in\clG^0\times \clG^{j}.\notag
\end{align}
\end{lemma}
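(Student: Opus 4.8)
The plan is to prove Lemma~\ref{L:findimGj} by induction on $j$, simultaneously establishing finite-dimensionality and the existence of the special spanning set. The base case $j=0$ is immediate: $\clG^0=\linspan G_0$ is finite-dimensional because $G_0$ is a finite set, so we may take $\{\Psi_k\mid 1\le k\le\overline m_0\}$ to be any basis of $\clG^0$ (here $\overline m_0=\dim\clG^0$). The content is in the inductive step, where we assume $\clG^j$ is finite-dimensional with a chosen basis and must analyze the passage $\clG^{j}\mapsto\clG^{j+1}=\clG^{j}+\clB_\clH(\clG^0)\clG^{j}$.

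The key observation is that $\clB_\clH(\clG^0)\clG^{j}=\linspan\{\clH\cap\clB(z)y\mid(z,y)\in\clG^0\times\clG^j\}$, and the map $(z,y)\mapsto\clB(z)y=B(z,y)+B(y,z)$ is bilinear. Therefore, if $\{\psi_i\mid 1\le i\le p\}$ is a (finite) basis of $\clG^0$ and $\{\phi_l\mid 1\le l\le q\}$ is the chosen basis of $\clG^j$, then every element $\clB(z)y$ with $(z,y)\in\clG^0\times\clG^j$ is a linear combination of the finitely many vectors $\clB(\psi_i)\phi_l$. There is a subtlety here: we need $\clH\cap\clB(z)y$ to lie in the span of the individual vectors $\clH\cap\clB(\psi_i)\phi_l$; but since each $\clB(\psi_i)\phi_l$ already lies in $\clH$ (as it is one of the generators appearing in the definition~\eqref{BZH-clH} applied to basis elements — more precisely, we only retain those pairs $(i,l)$ for which $\clB(\psi_i)\phi_l\in\clH$, and discard the rest), and since for a general pair $(z,y)$ the vector $\clB(z)y=\sum_{i,l}c_i d_l\,\clB(\psi_i)\phi_l$ is a linear combination, whenever this linear combination happens to lie in $\clH$ it automatically lies in $\linspan\{\clB(\psi_i)\phi_l\in\clH\}$ provided the complementary combination vanishes — one argues this cleanly by decomposing relative to a complement of $\clH$, or more simply by noting $\clB_\clH(\clG^0)\clG^j\subseteq\linspan\{\clB(\psi_i)\phi_l\mid \text{all }i,l\}$ as a finite-dimensional space, hence finite-dimensional, and it is a subspace of $\clH$ by construction. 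Thus $\clG^{j+1}=\clG^j+\clB_\clH(\clG^0)\clG^j$ is finite-dimensional, being a sum of two finite-dimensional spaces.

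To produce the asserted basis, set $\overline m_j\coloneqq\dim\clG^j$ and let $\{\Psi_1,\dots,\Psi_{\overline m_j}\}$ be the basis of $\clG^j$ carried over from the inductive hypothesis. Among the generators $\{\clB(\psi_i)\phi_i\mid\text{finitely many pairs }(\psi_i,\phi_i)\in\clG^0\times\clG^j\text{ with }\clB(\psi_i)\phi_i\in\clH\}$, choose a maximal subset, relabelled $\clB(\psi_1)\phi_1,\dots,\clB(\psi_{n_j})\phi_{n_j}$, that is linearly independent modulo $\clG^j$; this is possible since $\clG^{j+1}/\clG^j$ is finite-dimensional and the images of these generators span it. Then $n_j=\dim\clG^{j+1}-\overline m_j\in\bbN$, and setting $\Psi_{\overline m_j+i}\coloneqq\clB(\psi_i)\phi_i$ for $1\le i\le n_j$ yields a set $\{\Psi_k\mid 1\le k\le\overline m_j+n_j\}$ with $\clG^{j+1}=\linspan\{\Psi_k\mid 1\le k\le\overline m_j+n_j\}$, $\clG^j=\linspan\{\Psi_k\mid 1\le k\le\overline m_j\}$, and $\dim\clG^j=\overline m_j$, exactly as claimed (with $\psi_i\in\clG^0$ and $\phi_i\in\clG^j$ by construction).

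The main obstacle — though it is a minor one — is the bookkeeping around the intersection with $\clH$ in~\eqref{BZH-clH}: one must be careful that restricting to basis pairs $(\psi_i,\phi_l)$ and then intersecting with $\clH$ produces the same span as the original definition (which intersects $\clB(z)y$ with $\clH$ for all $(z,y)$, not just basis pairs). I expect this is handled by the remark that $\clB_\clH(\clG^0)\clG^j$ is, by definition, a span of vectors already lying in $\clH$, hence it is both contained in $\clH$ and contained in the finite-dimensional space $\linspan\{\clB(\psi_i)\phi_l\mid\text{all }i,l\}$; the asserted structured basis is then extracted purely by finite-dimensional linear algebra. Everything else is routine induction.
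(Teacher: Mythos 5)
Your proof is correct and follows essentially the same route as the paper's: induction on~$j$, finite-dimensionality via bilinearity of~$\clB$ (showing~$\clB_\clH(\clG^0)\clG^j$ lies in the span of the finitely many $\clB(\Psi_r^0)\Psi_s^j$ over basis pairs), and then finite-dimensional linear algebra to extract the structured spanning set. The one minor difference is that you insist on choosing generators linearly independent modulo~$\clG^j$, which yields an actual basis of~$\clG^{j+1}$ with~$n_j=\dim\clG^{j+1}-\overline m_j$; the paper is slightly looser, taking a basis of~$\clB_\clH(\clG^0)\clG^j$ of the form~$\{\clB(\psi_i)\phi_i\}$ and simply discarding those elements lying in~$\clG^j$, which produces a spanning set (possibly redundant modulo~$\clG^j$) — enough for the Lemma, which only asserts~$\clG^{j+1}=\linspan\{\Psi_k\}$, not independence. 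You also flag and correctly dispose of the~$\clH$-intersection subtlety (the generators of~$\clB_\clH(\clG^0)\clG^j$ already lie in~$\clH$, and the span sits inside the finite-dimensional space~$\linspan\{\clB(\Psi_r^0)\Psi_s^j\}$), which the paper handles the same way though more tersely.
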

The proof is given in the Appendix, Section~\ref{Apx:proofL:findimGj}. 

\subsubsection{On time-average of functions weighted with oscillating functions}
\label{ssS:imitation-oscill}
The key step within the Agrachev--Sarychev approach concerns the ``imitation'' of control forces taking values in~$\clG^{j+1}$ by fast oscillating control forces taking values in~$\clG^{j}$; recall~\eqref{seqclG}. A key role within this step is played by the following result (cf.~\cite[Lem.~2]{AgraSary06}).
   
  \begin{lemma}\label{L:intsincosKTheta} 
 Given~$K\in\bbN_+$ and a scalar function~$\Theta\in W^{1,1}(0,T)$, we have
 \begin{subequations}
 \begin{align}
&\norm{{\int_0^T}\sin(\tfrac{\pi K s}{T})\Theta(s)\,\rmd s}{\bbR}\le \frac{(\pi C+1)T}{\pi}\norm{\Theta}{W^{1,1}(0,T)}K^{-1},\label{intsinKTheta}\\
&\norm{{\int_0^T}\cos(\tfrac{\pi K s}{T})\Theta(s)\,\rmd s}{\bbR}\le \frac{T}{\pi}\norm{\Theta}{W^{1,1}(0,T)}K^{-1},\label{intcosKTheta}
\end{align}
\end{subequations}
with~$C\coloneqq\norm{\Id}{\clL(W^{1,1}(0,T),L^\infty(0,T))}$.
  \end{lemma}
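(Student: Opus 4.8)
\textbf{Proof plan for Lemma~\ref{L:intsincosKTheta}.} The statement is a quantitative Riemann--Lebesgue-type estimate: the $L^1$-pairing of a fast oscillating trigonometric factor against a function of bounded variation decays like $K^{-1}$, with an explicit constant expressed through the $W^{1,1}$-norm of $\Theta$. The natural route is integration by parts, using that $\Theta\in W^{1,1}(0,T)$ admits an absolutely continuous representative (equivalently, $\Theta(s)=\Theta(0)+\int_0^s\dot\Theta(\tau)\,\rmd\tau$ for a.e.\ $s$), so boundary terms make sense pointwise.

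\emph{Cosine estimate.} First I would prove \eqref{intcosKTheta}, which is the cleaner case. Let $S_K(s)\coloneqq\frac{T}{\pi K}\sin(\tfrac{\pi K s}{T})$ be the antiderivative of $\cos(\tfrac{\pi K s}{T})$ that vanishes at both endpoints $s=0$ and $s=T$ (this is where $\sin(\pi K)=0$ for integer $K$ is used). Integrating by parts,
\begin{align}\notag
\int_0^T\cos(\tfrac{\pi K s}{T})\Theta(s)\,\rmd s
=\bigl[S_K(s)\Theta(s)\bigr]_0^T-\int_0^T S_K(s)\dot\Theta(s)\,\rmd s
=-\int_0^T S_K(s)\dot\Theta(s)\,\rmd s,
\end{align}
the boundary term vanishing by the choice of $S_K$. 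Since $\norm{S_K}{L^\infty(0,T)}\le \frac{T}{\pi K}$, we get
$\bigl|\int_0^T\cos(\tfrac{\pi K s}{T})\Theta(s)\,\rmd s\bigr|\le \frac{T}{\pi K}\norm{\dot\Theta}{L^1(0,T)}\le \frac{T}{\pi}\norm{\Theta}{W^{1,1}(0,T)}K^{-1}$, which is exactly \eqref{intcosKTheta}.

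\emph{Sine estimate.} For \eqref{intsinKTheta} the same idea applies, but now an antiderivative of $\sin(\tfrac{\pi K s}{T})$ is $-\frac{T}{\pi K}\cos(\tfrac{\pi K s}{T})$, which does \emph{not} vanish at the endpoints, so a surviving boundary term $-\frac{T}{\pi K}\bigl[\cos(\tfrac{\pi K s}{T})\Theta(s)\bigr]_0^T$ remains. This term is bounded by $\frac{2T}{\pi K}\norm{\Theta}{L^\infty(0,T)}$, and here one invokes the embedding constant $C=\norm{\Id}{\clL(W^{1,1}(0,T),L^\infty(0,T))}$ to write $\norm{\Theta}{L^\infty(0,T)}\le C\norm{\Theta}{W^{1,1}(0,T)}$; the interior term is bounded as before by $\frac{T}{\pi K}\norm{\dot\Theta}{L^1(0,T)}$. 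Combining, the total is at most $\frac{T}{\pi K}\bigl(2C+1\bigr)\norm{\Theta}{W^{1,1}(0,T)}$; absorbing the factor $2$ into the stated constant (or simply noting $2C+1\le \pi C+1$ when $C\ge 1/(\pi-2)$, or re-deriving with the sharper pairing $\bigl[\cos\cdot\Theta\bigr]_0^T$ only picking up one endpoint's worth after exploiting a reflection trick) yields \eqref{intsinKTheta} with the constant $\frac{(\pi C+1)T}{\pi}$. In fact the cleanest way to land exactly on $\pi C+1$ is: choose the antiderivative $\widetilde C_K(s)\coloneqq\frac{T}{\pi K}\bigl(1-\cos(\tfrac{\pi K s}{T})\bigr)$, which vanishes at $s=0$, so only the $s=T$ boundary term $\widetilde C_K(T)\Theta(T)$ survives, with $|\widetilde C_K(T)|\le\frac{T}{\pi K}\cdot\pi=\frac{T}{K}$ estimated crudely or $\le\frac{2T}{\pi K}$ sharply; then $\bigl|\int_0^T\sin(\tfrac{\pi K s}{T})\Theta(s)\,\rmd s\bigr|\le |\widetilde C_K(T)|\,\norm{\Theta}{L^\infty}+\norm{\widetilde C_K}{L^\infty}\norm{\dot\Theta}{L^1}\le \frac{T}{\pi K}\bigl(\pi C+1\bigr)\norm{\Theta}{W^{1,1}(0,T)}$, using $|\widetilde C_K(T)|\le\frac{2T}{\pi K}\le\frac{\pi C\cdot T}{\pi K}$ provided $C\ge 2/\pi$ (and $C\ge 1$ always, being an operator norm of an embedding that is not a contraction on constants plus derivatives), which is the claimed bound.

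\textbf{Main obstacle.} There is no real analytic difficulty here; the one point requiring a little care is justifying integration by parts for merely $W^{1,1}$ data — one works with the absolutely continuous representative and uses the fundamental theorem of calculus for absolutely continuous functions — and then bookkeeping the constants so that the boundary term from the sine case is packaged into precisely the factor $\pi C+1$ rather than a generic $c(1+C)$. The cosine case is boundary-term-free and immediate; the sine case carries the single surviving endpoint contribution, and that is the only place the embedding constant $C$ enters.
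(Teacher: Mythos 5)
Your cosine argument is identical to the paper's. For the sine estimate you take a genuinely different route: you integrate by parts directly on $[0,T]$, keep the surviving endpoint contribution, and bound it by $\tfrac{2T}{\pi K}\norm{\Theta}{L^\infty}\le\tfrac{2T}{\pi K}C\norm{\Theta}{W^{1,1}}$, landing on the total $\tfrac{(2C+1)T}{\pi K}\norm{\Theta}{W^{1,1}}$. The paper instead splits $[0,T]$ at the quarter-period points $t_1=\tfrac{T}{2K}$ and $t_2=\tfrac{(2K-1)T}{2K}$, bounds the two outer strips (of total length $T/K$) crudely by $\tfrac{T}{K}\norm{\Theta}{L^\infty}$, and integrates by parts only on $[t_1,t_2]$, where the antiderivative $-\tfrac{T}{\pi K}\cos(\tfrac{\pi K s}{T})$ vanishes at both cutoff points, yielding exactly $\pi C+1$. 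Your version is simpler and actually gives a sharper constant, and it does land inside the stated bound: $2C+1\le\pi C+1$ is equivalent (dividing by $C>0$) to $2\le\pi$, which holds unconditionally — your hedging ``when $C\ge 1/(\pi-2)$'' is a slip, no such condition is needed. The subsequent $\widetilde C_K$ variant you sketch to ``land exactly on $\pi C+1$'' is not needed and in fact has a bookkeeping error: $\widetilde C_K(s)=\tfrac{T}{\pi K}(1-\cos(\tfrac{\pi K s}{T}))$ has $\norm{\widetilde C_K}{L^\infty}=\tfrac{2T}{\pi K}$, not $\tfrac{T}{\pi K}$, so the interior term picks up an extra factor $2$ and that route gives $2C+2$ rather than $\pi C+1$. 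Drop that detour; the first, plain integration by parts already closes the lemma cleanly.
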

 The proof is given in the Appendix, Section~\ref{Apx:proofL:intsincosKTheta}. 

\subsubsection{On oblique projections}\label{ssS:obliqProj}
The first step within the Agrachev--Sarychev approach concerns the derivation of approximate controllability by means of control forces taking values in~$\clG^{\overline\jmath}$ for~$\overline\jmath$ large enough. In this step we shall follow a variation of the arguments in the literature, where we will make use of suitable  projections in the pivot Hilbert space~$H$. 
Let~$\clX$ and~$\clY$ be  two closed subspaces of~$H$ such that we have the direct sum~$H=\clX\oplus \clY$, that is, $H=\clX+\clY$ and~$\clX\bigcap\clY=\{0\}$. The (\emph{oblique}) projection onto~$\clX$ along~$\clY$ is denoted~$P_{\clX}^{\clY}$ and is defined by the identities
\begin{equation}\notag
P_{\clX}^{\clY}z\in \clX\quad\mbox{and}\quad z-P_{\clX}^{\clY}z\in \clY,\quad\mbox{for all}\quad z\in H.
\end{equation}
In particular, the \emph{orthogonal} projection in~$H$ onto~$\clX$ is given by~$P_{\clX}^{\clX^\perp}$, where
\begin{equation}
\clX^\perp=\clX^{\perp,H}\coloneqq\{h\in H\mid (h,s)_H=0\mbox{ for all }s\in \clX\}\notag
\end{equation}
denotes the complementary $H$-orthogonal space to~$\clX$.

\begin{lemma}\label{L:obliproj}
Let~$M\in\bbN_+$ and let~$\clE_M$ be the space spanned by the first eigenfunctions of the Stokes operator as in~\eqref{eigStokesLions}. Given~$\varrho>0$, we can find $\overline\jmath\in\bbN_+$ and~$\clU_M\subset\clG^{\overline\jmath}$ so that~$\norm{P_{\clE_M^\top}^{\clU_M}P_{\clE_M}^{\clE_M^\top}}{\clL(H)}<\varrho$.
  \end{lemma}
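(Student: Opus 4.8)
The plan is to argue that the saturating property forces $\clG^{\overline\jmath}$ to contain a subspace that, together with $\clE_M$, spans a sufficiently large chunk of $H$, and then to show that a suitable choice of such a subspace makes the composition of the two projections small in operator norm. First I would observe that $\clE_M$ is finite-dimensional, say $\dim\clE_M=N$, so that $\clE_M^\top$ (the $H$-orthogonal complement) has finite codimension $N$. The quantity we want to control is $P_{\clE_M^\top}^{\clU_M}P_{\clE_M}^{\clE_M^\top}$; note that $P_{\clE_M}^{\clE_M^\top}$ is the orthogonal projection onto the $N$-dimensional space $\clE_M$, hence bounded (norm $1$) with finite rank, and the issue is the oblique projection $P_{\clE_M^\top}^{\clU_M}$, which is only defined once $H=\clE_M^\top\oplus\clU_M$, i.e. once $\clU_M$ is an algebraic complement of $\clE_M^\top$ in $H$; this requires $\clU_M$ to be $N$-dimensional with $\clU_M\cap\clE_M^\top=\{0\}$, equivalently $\clU_M\oplus\clE_M^\top=H$, equivalently the orthogonal projection $\clE_M\to$ image restricted to $\clU_M$ is an isomorphism $\clU_M\to\clE_M$.

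The key step is to choose $\clU_M$ close to $\clE_M$ itself. Since $G_0$ is $\clH$-saturating with $\clH=\clC^{2,1}(\overline\Omega)$, the union $\bigcup_j\clG^j$ is dense in $H$; in particular, for each basis vector $e_i$ of $\clE_M$ ($1\le i\le N$) and any $\delta>0$ there is some index $j_i$ and a vector $g_i\in\clG^{j_i}$ with $\norm{e_i-g_i}{H}<\delta$. Taking $\overline\jmath\coloneqq\max_i j_i$ and using the monotonicity $\clG^{j}\subset\clG^{j+1}$, all the $g_i$ lie in $\clG^{\overline\jmath}$; set $\clU_M\coloneqq\linspan\{g_1,\dots,g_N\}$. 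For $\delta$ small, the Gram matrix of $(g_i)$ is close to that of the orthonormal $(e_i)$, hence $(g_i)$ is linearly independent and $\dim\clU_M=N$; moreover the $N\times N$ matrix $[(g_i,e_k)_H]$ is close to the identity, hence invertible, which gives precisely $\clU_M\oplus\clE_M^\top=H$ and legitimizes the oblique projection. A standard perturbation estimate then shows $\norm{P_{\clE_M^\top}^{\clU_M}-P_{\clE_M^\top}^{\clE_M}}{\clL(H)}\to 0$ as $\delta\to0$, where $P_{\clE_M^\top}^{\clE_M}=\Id-P_{\clE_M}^{\clE_M^\top}$ is the orthogonal complement projection. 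Consequently
\[
P_{\clE_M^\top}^{\clU_M}P_{\clE_M}^{\clE_M^\top}
=\bigl(P_{\clE_M^\top}^{\clU_M}-P_{\clE_M^\top}^{\clE_M}\bigr)P_{\clE_M}^{\clE_M^\top}
+ P_{\clE_M^\top}^{\clE_M}P_{\clE_M}^{\clE_M^\top},
\]
and the second term vanishes identically because $P_{\clE_M^\top}^{\clE_M}=\Id-P_{\clE_M}^{\clE_M^\top}$ and $P_{\clE_M}^{\clE_M^\top}$ is an idempotent, so $(\Id-P_{\clE_M}^{\clE_M^\top})P_{\clE_M}^{\clE_M^\top}=0$. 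Hence the whole composition has norm bounded by $\norm{P_{\clE_M^\top}^{\clU_M}-P_{\clE_M^\top}^{\clE_M}}{\clL(H)}$, which is $<\varrho$ once $\delta$ is chosen small enough; this fixes $\overline\jmath$ and $\clU_M$.

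The main obstacle is the quantitative perturbation bound: one must show that if $\clU_M$ is a graph over $\clE_M$ of a small linear map into $\clE_M^\top$, then the oblique projection $P_{\clE_M^\top}^{\clU_M}$ is close in $\clL(H)$ to the orthogonal complement projection $P_{\clE_M^\top}^{\clE_M}$. The clean way is to write $P_{\clE_M^\top}^{\clU_M}z = z - \sum_{i}c_i(z)\,g_i$ where the coefficients $c_i(z)$ solve the linear system $\sum_i c_i(z)(g_i,e_k)_H=(z,e_k)_H$ for $1\le k\le N$; since the matrix $[(g_i,e_k)_H]$ is within $O(\delta)$ of the identity and $\norm{g_i-e_i}{H}=O(\delta)$, a direct estimate gives $\norm{P_{\clE_M^\top}^{\clU_M}z - P_{\clE_M^\top}^{\clE_M}z}{H}\le C_N\,\delta\,\norm{z}{H}$ with $C_N$ depending only on $N$. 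I expect the only delicate bookkeeping to be tracking this $N$-dependent constant and verifying invertibility thresholds, but there is no analytic subtlety beyond elementary finite-dimensional linear algebra plus the density supplied by the saturating hypothesis; one should also recall from Lemma~\ref{L:findimGj} that each $\clG^j$ is finite-dimensional, so all spaces in sight are finite-dimensional or finite-codimensional and no infinite-dimensional complications arise.
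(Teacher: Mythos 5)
Your argument is correct and follows essentially the same route as the paper: both use the density of $\bigcup_j\clG^j$ in $H$ to pick vectors in some $\clG^{\overline\jmath}$ close to the orthonormal $e_i$, and then quantify the resulting oblique projection by an elementary finite-dimensional perturbation of a Gram-type matrix near the identity. The paper packages this via the explicit formula $P_{\clU_M}^{\clE_M^\top}q=U_M^\diamond\Xi_M^{-1}(\bbE_M^\diamond)^{-1}q$ with $\Xi_M=[(e_i,\Phi_j)_H]$ and the identity $P_{\clE_M^\top}^{\clU_M}=\Id-P_{\clU_M}^{\clE_M^\top}$, which is algebraically the same as your comparison of $P_{\clE_M^\top}^{\clU_M}$ with $P_{\clE_M^\top}^{\clE_M}$ and your linear system for the coefficients $c_i(z)$.
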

   The proof is given in the Appendix, Section~\ref{Apx:proofL:obliproj}.

\subsection{$(T,\fkU)$-approximately controllability with~$\fkU=\clC^\infty([0,T],\clU+\clG^{\overline\jmath})$, for large~${\overline\jmath}$} \label{sS:proofACdensity}
In this section, we use the density of the inclusion~$\bigcup_{j\in \bbN}\clG^{j}\subset H$.

Let~$(T,f,w_0,w_\tta,\varepsilon)$ be the tuple fixed in~\eqref{fixedtuple}.
Essentially, we shall be looking for fictitious/auxiliary actuators in the union~$\bigcup_{j\in \bbN}\clG^{j}$  which are close to the elements in~$\bbE_M\coloneqq\{e_k\mid 1\le k\le M\}$, where the~$e_k$ are the eigenfunctions in~\eqref{eigStokesLions}; we denote~$\clE_M\coloneqq\linspan\bbE_M$.

Let us fix~$\overline s\in(0,\frac12)$ and recall that~$f\in \clC^{\infty;2,1}_H([0,T]\times\overline\Omega)\subset L^2((0,T),W^{1,2})$. 
Observe that~$(w_0,w_\tta)\in \clC^{2,1}_H(\overline\Omega)\times  \clC^{2,1}_H(\overline\Omega)\subset V\times V\subset \rmD(A^{\frac{\overline s}2})\times\rmD(A^{\frac{\overline s}2})$. Observe also that by Lemma~\ref{L:vect-vort-est} and the relations~$\norm{y}{V}^2=\norm{y}{H}^2+\norm{(\curl y}{L^2}^2\le \norm{y}{H}^2+2\norm{\nabla y}{L^2}^2\le2\norm{y}{W^{1,2}}^2$, we have that~$\norm{\Bigcdot}{V}$ and~$\norm{\Bigcdot}{W^{1,2}}$ are equivalent norms in~$V=W^{1,2}\bigcap H$. Hence, we find
\begin{align}
&\norm{P_{\clE_M^\perp}^{\clE_M}z}{H}^2\le\lambda_{M+1}^{-\overline s}\norm{P_{\clE_M^\perp}^{\clE_M}z}{\rmD(A^{\frac{\overline s}2})}^2\le\lambda_{M+1}^{-\overline s}\norm{\Id}{\clL(W^{1,2},\rmD(A^{\frac{\overline s}2}))}^2\norm{z}{W^{1,2}}^2,\quad\mbox{for } z\in\{w_0,w_\tta\};\notag\\
&\norm{P_{\clE_M^\perp}^{\clE_M}f}{L^2((0,T),H)}^2\le\lambda_{M+1}^{-\overline s}\norm{\Id}{\clL(W^{1,2},\rmD(A^{\frac{\overline s}2}))}^2\norm{f}{L^2((0,T),W^{1,2})}^2.\notag
\end{align}

We can now choose~$M\in\bbN$ large enough so that 
\begin{align}\label{choiceM}
&\norm{P_{\clE_M^\perp}^{\clE_M}y_0}{H}^2<\frac{\varepsilon^2}{20}\rme^{-3T},\qquad\norm{P_{\clE_M^\perp}^{\clE_M}y_\tta}{H}^2<\frac{\varepsilon^2}{20},\qquad
\norm{P_{\clE_M^\perp}^{\clE_M}f}{L^2((0,T),H)}^2<\frac{\varepsilon^2}{20}\rme^{-3T}.
\end{align}

Next, we consider an arbitrary space~$\widehat\clU_M$ such that we have the direct sum~$H=\widehat\clU_M\oplus \clE_M^\top$. We use the oblique projection~$P_{\widehat\clU_M}^{\clE_M^\top}$ to construct the control~$c(t)\in\widehat\clU_M$ as
\begin{align}\label{control-cM}
c\coloneqq\dot\eta-P_{\widehat\clU_M}^{\clE_M^\top}f,\quad\mbox{with}\quad\eta(t)\coloneqq \frac1T P_{\widehat\clU_M}^{\clE_M^\top}\left((T-t)\rme^{-\mu t}w_0+t\rme^{-\mu (T- t)}w_\tta\right),
\end{align}
with~$\mu>0$. Let~$y$ be the corresponding state, thus solving
\begin{align}\notag
&\dot y +B(y,y)=f+\dot \eta- P_{\widehat\clU_M}^{\clE_M^\top}f,\qquad y(0)=y_0.
\end{align}
Observe that~$z\coloneqq y-\eta$ satisfies
\begin{align}\notag
&\dot z+ B(z+\eta,z+\eta) =P_{\clE_M^\top}^{\widehat\clU_M}f,\qquad z(0)=P_{\clE_M^\top}^{\widehat\clU_M}y_0,
\end{align}
from which we also find
\begin{align}
\tfrac{\rmd}{\rmd t}\norm{z}{H}^2&=2\bfb(z+\eta,\eta,z)+2(P_{\clE_M^\top}^{\widehat\clU_M}f,z)_H\notag\\
&\le2C_\bfb\norm{z+\eta}{L^2}\norm{\eta}{\clC^{1}(\overline\Omega)}\norm{z}{L^2}+2\norm{P_{\clE_M^\top}^{\widehat\clU_M}f}{H}\norm{z}{H}\notag\\
&\le (2+2C_\bfb\norm{\eta}{\clC^{1}(\overline\Omega)})\norm{z}{L^2}^2+C_\bfb^2\norm{\eta}{L^2}^2\norm{\eta}{\clC^{1}(\overline\Omega)}^2+\norm{P_{\clE_M^\top}^{\widehat\clU_M}f}{H}^2,\notag
\end{align}
where we used~\eqref{asymTrilin} and~\eqref{estTrilin-h.C1.h}.
By the Gronwall inequality
\begin{align}
&\norm{z}{L^\infty((0,T),H)}^2\le \rme^{\xi_1}\left(\norm{z(0)}{H}^2 +\xi_0\right),\label{bddQ}\\
\mbox{with}\quad &\xi_1\coloneqq 2\int_0^T(1+C_\bfb\norm{\eta(t)}{\clC^{1}(\overline\Omega)})\,\rmd t\notag\\
\mbox{and}\quad  &\xi_0\coloneqq\int_0^T C_\bfb^2\norm{\eta(t)}{L^2}^2\norm{\eta(t)}{\clC^{1}(\overline\Omega)}^2+\norm{P_{\clE_M^\top}^{\widehat\clU_M}f(t)}{H}^2\,\rmd t.\notag
\end{align}
Next we estimate~$\xi_1$ and~$\xi_0$. Denoting, for simplicity
\begin{equation}
w_0^M\coloneqq P_{\clE_M}^{\clE_M^\perp}w_0,\quad w_\tta^M\coloneqq P_{\clE_M}^{\clE_M^\perp}w_\tta,\quad \dnorm{P_M}{}\coloneqq\norm{P_{\widehat\clU_M}^{\clE_M^\top}}{\clL(H)},\quad\dnorm{P_M^\perp}{}\coloneqq\norm{P_{\clE_M^\top}^{\widehat\clU_M}}{\clL(H)},\notag
\end{equation}
and observing that~$P_{\widehat\clU_M}^{\clE_M^\top}=P_{\widehat\clU_M}^{\clE_M^\top}P_{\clE_M}^{\clE_M^\top}$ and~$P_{\clE_M^\top}^{\widehat\clU_M}=P_{\clE_M^\top}^{\clE_M}+P_{\clE_M^\top}^{\widehat\clU_M}P_{\clE_M}^{\clE_M^\top}$ we find
\begin{align}
\norm{\eta(t)}{H}&\le\dnorm{P_M}{}\left(\rme^{-\mu t}\norm{y_0^M}{H}+\rme^{-\mu (T- t)}\norm{y_\tta^M}{H}\right),\notag\\
\norm{\eta(t)}{\clC^{1}(\overline\Omega)}&\le\dnorm{P_M}{}\left(\rme^{-\mu t}\norm{y_0^M}{\clC^{1}(\overline\Omega)}+\rme^{-\mu (T- t)}\norm{y_\tta^M}{\clC^{1}(\overline\Omega)}\right),\notag\\
\norm{P_{\clE_M^\top}^{\widehat\clU_M}f(t)}{H}&\le\norm{P_{\clE_M^\top}^{\clE_M}f(t)}{H}+\norm{P_{\clE_M^\top}^{\widehat\clU_M}P_{\clE_M}^{\clE_M^\perp}f(t)}{H},\notag
\end{align}
which leads to
\begin{align}
\int_0^T \norm{\eta(t)}{\clC^{1}(\overline\Omega)}\,\rmd t&\le \dnorm{P_M}{}\mu^{-1}\left(\norm{y_0^M}{\clC^{1}(\overline\Omega)}+\norm{y_\tta^M}{\clC^{1}(\overline\Omega)}\right),\notag\\
\int_0^T \norm{\eta(t)}{\clC^{1}(\overline\Omega)}^2\,\rmd t&\le  \dnorm{P_M}{}^2(2\mu)^{-1}\left(2\norm{y_0^M}{\clC^{1}(\overline\Omega)}^2+2\norm{y_\tta^M}{\clC^{1}(\overline\Omega)}^2\right),\notag\\
\int_0^T \norm{\eta(t)}{L^2}^2\norm{\eta(t)}{\clC^{1}(\overline\Omega)}^2\,\rmd t&\le \norm{\eta}{L^\infty((0,T),H)}^2\dnorm{P_M}{}^2\mu^{-1}\left(\norm{y_0^M}{\clC^{1}(\overline\Omega)}^2+\norm{y_\tta^M}{\clC^{1}(\overline\Omega)}^2\right)\notag\\
&\le 2\dnorm{P_M}{}^4\mu^{-1}\left(\norm{y_0^M}{H}^2+\norm{y_\tta^M}{H}^2\right)\left(\norm{y_0^M}{\clC^{1}(\overline\Omega)}^2+\norm{y_\tta^M}{\clC^{1}(\overline\Omega)}^2\right).\notag
\end{align}
Hence,
\begin{subequations}\label{estchi01}
\begin{align}
\xi_1&\le 2T+2C_\bfb\dnorm{P_M}{}\mu^{-1}(\norm{y_0^M}{\clC^{1}(\overline\Omega)}+\norm{y_\tta^M}{\clC^{1}(\overline\Omega)}),\\
\xi_0&\le 2C_\bfb^2\dnorm{P_M}{}^4\mu^{-1}\left(\norm{y_0^M}{H}^2+\norm{y_\tta^M}{H}^2\right)\left(\norm{y_0^M}{\clC^{1}(\overline\Omega)}^2+\norm{y_\tta^M}{\clC^{1}(\overline\Omega)}^2\right)\notag\\
&\quad+2\int_0^T \norm{P_{\clE_M^\top}^{\clE_M}f(t)}{H}^2+ \norm{P_{\clE_M^\top}^{\widehat\clU_M}P_{\clE_M}^{\clE_M^\top}f(t)}{H}^2\,\rmd t.
\end{align}
\end{subequations}

By~\eqref{choiceM} it follows that
\begin{subequations}\label{choiceUMlam}
\begin{equation}
2\int_0^T \norm{P_{\clE_M^\top}^{\clE_M}f(t)}{H}^2\,\rmd t\le \frac{2\varepsilon^2}{20}\rme^{-3T}.\end{equation}
The next step concerns the choice of an appropriate~$\widehat\clU_M\coloneqq\clU_M$.
For this purpose, we use Lemma~\ref{L:obliproj} with~$\varrho=(\frac{\varepsilon^2}{40}\rme^{-3T})^\frac12\left(1+\int_0^T\norm{f(t)}{H}^2\,\rmd t\right)^{-\frac12}$, to choose~${\overline\jmath}\in\bbN$ large enough  and a set of~$M$ actuators~$U_M\subset\clG^{\overline\jmath}$ so that, with $\clU_M\coloneqq\linspan U_M$,
\begin{equation}
2\int_0^T\norm{P_{\clE_M^\top}^{\clU_M}P_{\clE_M}^{\clE_M^\top}f(t)}{H}^2\,\rmd t\le2\varrho^2 \int_0^T\norm{f(t)}{H}^2\,\rmd t\le \frac{\varepsilon^2}{20}\rme^{-3T}.
\end{equation}
Next, we can choose~$\mu>0$ large enough  so that
\begin{align}
2C_\bfb\dnorm{P_M}{}\mu^{-1}(\norm{y_0^M}{\clC^{1}(\overline\Omega)}+\norm{y_\tta^M}{\clC^{1}(\overline\Omega)})&\le T,\\
2C_\bfb^2\dnorm{P_M}{}^4\mu^{-1}\left(\norm{y_0^M}{H}^2+\norm{y_\tta^M}{H}^2\right)\left(\norm{y_0^M}{\clC^{1}(\overline\Omega)}^2+\norm{y_\tta^M}{\clC^{1}(\overline\Omega)}^2\right)&\le \frac{\varepsilon^2}{20}\rme^{-3T}.
\end{align}
\end{subequations}

From~\eqref{estchi01} and~\eqref{choiceUMlam}, it follows that
$\xi_1\le 3T$ and~$\xi_0\le  \frac{4\varepsilon^2}{20}\rme^{-3T}$.
Then, by~\eqref{bddQ} and~\eqref{choiceM}, using~$z(0)=P_{\clE_M^\top}^{\widehat\clU_M}y_0$, we arrive at
\begin{align}
&\norm{z}{L^\infty((0,T),H)}^2\le\rme^{3T}\left(\frac{\varepsilon^2}{20}\rme^{-3T}+\frac{4\varepsilon^2}{20}\rme^{-3T}\right)=\frac{\varepsilon^2}{4}.\notag
\end{align}

Finally, recalling~\eqref{control-cM} and since~$f$ is smooth in the time variable, it follows that
the control input~$t\mapsto u(t)=(U_M^\diamond)^{-1}c(t)\in\bbR^M$ is smooth as well.
Therefore,  by choosing a basis~$\bbF_{m_{\overline\jmath}}$ for~$\clU+\clG^{\overline\jmath}$ including the basis~$U_M$ for~$\clU_M\subseteq\clG^{\overline\jmath}$, we can conclude that
\begin{equation}\label{AC-ok-density}
\begin{split}
&\mbox{there exists~$\overline\jmath\in\bbN$, $\;\bbF_{m_{\overline\jmath}}\subset\clU+\clG^{\overline\jmath}$, and~$u\in \clC^{\infty}([0,T],\bbR^{m_{\overline\jmath}})$ such that}\\
&\hspace{2em}\linspan\bbF_{m_{\overline\jmath}}=\clU+\clG^{\overline\jmath},\quad  m_{\overline\jmath}\coloneqq\dim(\clU+\clG^{\overline\jmath}),\quad\mbox{and}\\
&\hspace{2em}\norm{\fkY(y_0;0,f+\bbF_{m_{\overline\jmath}}^\diamond u)(T)-y_\tta}{H}\le \frac\varepsilon2.
\end{split}
\end{equation}
In particular, since the tuple in~\eqref{fixedtuple} has been arbitrarily fixed, we can conclude that the Euler system~\eqref{evol-sys-bfc} is $(T,\fkU)$-approximately controllable with~$\fkU=\clC^\infty([0,T],\clU+\clG^{\overline\jmath})$.

\subsection{$(T,\fkU)$-approximately controllability with~$\fkU=\clC^\infty([0,T],\clU+\clG^{j-1})$ and~$j\ge1$.} \label{sS:proofACimitation}
In this section, we use the recursion~\eqref{seqclG} to replace the actuations in~$\clG^{j}\setminus\clG^{j-1}$ by actuations in~$\clU+\clG^{j-1}$, still driving the system to a neighborhood of the aim state~$w_\tta$, at time~$T$.

Let~$\varepsilon>0$ and~$\overline\jmath$ be as in~\eqref{AC-ok-density}. Assume that~$\overline\jmath\ge j\ge1$ and that
\begin{equation}\label{AC-Hypo-imitation}
\stepcounter{equation}\tag{hypo:\theequation}
\begin{split}
&\linspan\bbF_{m_{j}}=\clU+\clG^{j},\quad u^j\in \clC^\infty([0,T],\bbR^{m_j}),\quad m_j\coloneqq\dim(\clU+\clG^{j}),\\
&\norm{\fkY(w_0;0,f+\bbF_{m_{j}}^\diamond u^j)(T)-y_\tta}{H}\le \frac{2\overline\jmath-j}{2\overline\jmath}\varepsilon.
\end{split}
\end{equation}
Thus~$\bbF_{m_{j}}=\{\Phi_i\mid 1\le i\le m_j\}$ is a basis for~$\clU+\clG^j$ and we can write~$\bbF_{m_{j}}^\diamond u^j\coloneqq {\textstyle\sum\limits_{i=1}^{m_j}} u^j_i\Phi_i$.
In this section we show an analogue of~\eqref{AC-Hypo-imitation} also holds, with~$\bbF_{m_{j}}^\diamond u^j$ replaced by a control force taking values in~$\clU+\clG^{j-1}\subset \clU+\clG^{j}\subset\clC^{2,1}(\overline\Omega)$.

Recalling~\eqref{seqclG} and Lemma~\ref{L:findimGj} we can assume that
\begin{align}
&\bbF_{m_{j}}=\bbF_*{\;\textstyle\bigcup\;}\{\Phi_{m_j-i+1}=\clB(\psi_i)\phi_i\mid 1\le i\le n\}\label{bbF-osc}
\intertext{where}
&\psi_i\in\clG^0\subset\clC^{2,1}(\overline\Omega),\qquad \phi_i\in\clG^{j-1}\subset \clC^{2,1}(\overline\Omega),\notag\\
&\bbF_*\coloneqq\{\Phi_i\mid 1\le i\le m_j-n\},\quad\linspan\bbF_*=\clU+\clG^{j-1},\quad\clB(\psi_i)\phi_i\in\clG^{j}\setminus\clG^{j-1}\subset\clC^{2,1}(\overline\Omega).\notag
\end{align}
That is, we can write
\begin{align}
&\bbF_{m_j}^\diamond u^j(t)=\varkappa_0(t)+\overline\varrho_n(t)\in \clC^\infty([0,T],\clU+\clG^{j-1})+\clC^\infty([0,T],\clG^{j}_{j-1})\label{Fcontrol-given}
\intertext{with~$\clG^{j}_{j-1}\coloneqq\linspan(\clG^{j}\setminus\clG^{j-1})$ and}
&\varkappa_0(t)\in\clU+\clG^{j-1},\qquad\overline\varrho_n(t)={\textstyle\sum\limits_{i=1}^n} v_i(t)\clB(\psi_i)\phi_i,\qquad n\coloneqq\dim\clG^{j}_{j-1}<{m_j},\notag
\intertext{where}
&(\psi_i,\phi_i)\in \clG^{0}\times\clG^{j-1}\quad\mbox{and}\quad\clB(\psi_i)\phi_i\in\clG^{j}_{j-1}.\notag
\intertext{We can also assume that}
&\norm{\psi_i}{H}=\norm{\phi_i}{H}=1\quad\mbox{for all}\quad 1\le i\le n.\notag
\end{align}

We are looking for a control force in~$\clC^\infty([0,T],\clU+\clG^{j-1})$. We start by imitating (replacing) the component~$v_n(t)\clB(\psi_n)\phi_n$ with an appropriate control force taking values in~$\clU+\clG^{j-1}$. Of course, if~$v_n(t)=0$ we can just take the zero control force, however to simplify the exposition we treat both cases~$v_n(t)\ne0$ and~$v_n(t)=0$ analogously as follows.

We start by writing the given control force~\eqref{Fcontrol-given} as
\begin{equation}\label{Fcontrol}
\begin{split}
&\bbF_{m_j}^\diamond u^j(t)=\varkappa_0(t)+\overline\varrho_{n-1}(t)+\varrho_{n}(t)
\\
&\mbox{with}\quad\varrho_{n}(t)\coloneqq v_n(t)\clB(\psi_n)\phi_n\quad\mbox{and}\quad\overline\varrho_{n-1}(t)\coloneqq\overline\varrho_n-\varrho_n.
\end{split}
\end{equation}

We shall show that fast oscillating controls as
\begin{equation}\label{osccontrol}
\begin{split}
&\fkF_{\beta,K}(u^j(t))\coloneqq\varkappa_0(t)+\overline\varrho_{n-1}(t)+\beta^{-2}\varkappa_1(t) -\dot\varkappa_2(t),\quad\mbox{with}\\
&\varkappa_1(t)\coloneqq v_n^2(t)B(\psi_n,\psi_n)\mbox{ and }
\varkappa_2(t)\coloneqq2^\frac12\sin(\tfrac{\pi K t}{T})(\beta^{-1} v_n(t)\psi_n-\beta \phi_n),
\end{split}
\end{equation}
also steer the state to a neighborhood of the aim state, at time~$T$, provided~$\beta>0$ is small enough, and~$K=K(\beta)\in\bbN$ is large enough. Note that, recalling~\eqref{U}, we have that~$B(\psi_n,\psi_n)\in\clU$, hence~$\beta^{-2}\varkappa_1(t) -\dot\varkappa_2(t)\in\clU+\clG^{j-1}$.

We consider also an auxiliary perturbation of~$\bbF_{m_{j}}^\diamond u^j(t)$ as follows, with small~$\beta>0$,
\begin{equation}\label{undFcontrol}
\fkH_\beta(u^j)(t)\coloneqq \bbF_{m_{j}}^\diamond u^j(t)-\beta^2 B(\phi_n,\phi_n).
\end{equation}

\subsubsection{Comparing~$\bbF_{m_{j}}^\diamond u^j(t)$ with~$\fkH_\beta(u^j)$, for small~$\beta>0$.}
We investigate  the difference~$z\coloneqq \underline y-y$, with~$\underline y\coloneqq\fkY(y_0;0,f+\fkH_\beta(u^j))$ and~$y\coloneqq\fkY(y_0;0,f+\bbF_{m_{j}}^\diamond u)$, which solves
\begin{align}\notag
\dot z +B(z,z) +B(z,y) +B(y,z) =-\beta^2 B(\phi_n,\phi_n),\qquad z(0)=0,
\end{align}
see~\eqref{Fcontrol} and~\eqref{undFcontrol}. Then, using~\eqref{asymTrilin}  and~\eqref{estTrilin-h.C1.h},
we find
\begin{align}
\tfrac{\rmd}{\rmd t}\norm{z}{H}^2 &=-2\bfb(z,y,z)-2\beta^2 \bfb(\phi_n,\phi_n,z)\le2C_\bfb\norm{y}{\clC^1(\overline\Omega)}\norm{z}{H}^2+2C_\bfb\beta^2 \norm{\phi_n}{H}\norm{\phi_n}{\clC^1(\overline\Omega)}\norm{z}{H}\notag\\
&\le C_\bfb(2\norm{y}{\clC^1(\overline\Omega)}+1)\norm{z}{H}^2+C_\bfb\beta^4 \norm{\phi_n}{H}^2 \norm{\phi_n}{\clC^1(\overline\Omega)}^2\notag.
\end{align}

By Theorem~\ref{T:KatoShiri}, with~$k=2$, we have that
$y\in\clC([0,T],\clC^{1}(\overline\Omega))$. Thus, the Gronwall inequality gives
$
 \norm{z}{L^\infty((0,T),H)}^2\le \beta^{4}T\overline D_0\rme^{\overline D_1 T},
$
 with~$\overline D_1=C_\bfb(2\norm{y}{\clC^{0}([0,T],\clC^{1}(\overline\Omega))}+1)$ and~$\overline D_0=C_\bfb\norm{\phi_n}{L^\infty((0,T),H)}^2 \norm{\phi_n}{L^\infty((0,T),\clC^1(\overline\Omega))}^2$ independent of~$\beta$. 
Hence, we can choose~$\beta$ small enough  so that~$\norm{z(T)}{\bfH}\le\frac{1}{4n\overline\jmath}\varepsilon$, which combined with~\eqref{AC-ok-density} leads to
\begin{align}\label{AC-imitation-1}
\norm{\fkY(y_0;0,\bff+\fkH_\beta(u^j))(T)-y_\tta}{H}&\le \norm{z(T)}{H}+\norm{y(T)-y_\tta}{H}
\le\left(\tfrac{2\overline\jmath-j}{2\overline\jmath}+\tfrac{1}{4n\overline\jmath}\right)\varepsilon.
\end{align}

\subsubsection{Comparing~$\fkH_\beta(u^j)$ with~$\fkF_{\beta,K}(u^j)$, for fixed~$\beta>0$ and large~$K\in\bbN$.}\label{sS:imit-comp2}
Next, we investigate  the difference~$\overline z\coloneqq \overline y-\underline y$, with~$\underline y=\fkY(y_0;0,f+\fkH_\beta(u))$ and~$\overline y\coloneqq\fkY(y_0;0,f+\fkF_{\beta,K}(u))$.

We start by observing that, due to~$(y_0,0,f+\fkH(u))\in \bfX_2$ and Theorem~\ref{T:KatoShiri},  we have
 \begin{align}\label{bdd_underw}
 \{\underline y,\dot {\underline y},\p_{x_1}\underline y,\p_{x_2}\underline y\}\subset \clC([0,T]\times \clC^1(\overline\Omega)).
   \end{align}

By~\eqref{undFcontrol} and~\eqref{osccontrol}, the difference~$\overline z\coloneqq\overline y- \underline y$ solves
\begin{align}
\dot {\overline z}& =-B(\overline  z,\overline z)-B(\overline z,\underline y)-B(\underline y,\overline  z)+\beta^{-2}\varkappa_1 -\varrho_n+\beta^2 B(\phi_n,\phi_n) -\dot\varkappa_2.\notag
\end{align}
Introducing~$Z\coloneqq\overline  z+\varkappa_2$, we find
\begin{subequations}
\begin{align}
\dot Z =\dot {\overline z}+\dot\varkappa_2&=-B(Z-\varkappa_2,Z-\varkappa_2)-B(Z-\varkappa_2),\underline y)-B(\underline y,Z-\varkappa_2)\notag\\
&\quad+\beta^{-2}\varkappa_1 -\varrho_n+\beta^2 B(\phi_n,\phi_n),\label{dyn.Zimity1}\\
Z(0)&=z(0)=0,\qquad Z(T)=\overline z(T).\label{dyn.Zimity1-ifcond}
\end{align}
\end{subequations}

Observe that, recalling~\eqref{osccontrol} and~\eqref{Fcontrol},
\begin{align}
B(\varkappa_2(t),\varkappa_2(t))
&=2\sin^2(\tfrac{\pi K t}{T})\left(\beta^{-2} v_n^2(t)B(\psi_n,\psi_n)-v_n(t)\clB(\psi_n)\phi_n+\beta^{2}B(\phi_n,\phi_n)\right)\notag\\
&=2\sin^2(\tfrac{\pi K t}{T})\left(\beta^{-2}\varkappa_1(t) -\varrho_n(t)+\beta^{2}B(\phi_n,\phi_n)\right).\notag
\end{align}
Recalling also the relation~$2\sin^2(s)=1-\cos(2s)$, we find
\begin{align}
&\beta^{-2}\varkappa_1(t) -\varrho_n(t)+\beta^2B(\phi_n,\phi_n)-B(\varkappa_2(t),\varkappa_2(t))\notag\\
&\hspace{3em}=\cos(\tfrac{2\pi K t}{T})\left(\beta^{-2}\varkappa_1(t) -\varrho_n(t)+\beta^{2}B(\phi_n,\phi_n)\right)\eqqcolon h_{\beta,K}(t).\label{hbK}
\end{align}
Thus, from~\eqref{dyn.Zimity1}, we obtain
\begin{align}
\dot Z&=-B(Z,Z)+\clB(\varkappa_2)Z-\clB(\underline y)(Z-\varkappa_2)+h_{\beta,K}\notag\\
&=-B(Z,Z)+\clB(\varkappa_2-\underline y)Z+\clB(\underline y)\varkappa_2+h_{\beta,K}.\label{dyn.Zimity2}
\end{align}
Testing with~$2Z$ and using~\eqref{asymTrilin} and~\eqref{estTrilin-h.C1.h}, we find
\begin{align}
\tfrac{\rmd}{\rmd t}\norm{Z}{H}^2
&=2\bfb(Z,\varkappa_2-\underline y,Z)_{H}+2(\clB(\underline y)\varkappa_2+h_{\beta,K},Z)_{H}\notag\\
&\le2C_\bfb\norm{\varkappa_2-\underline y}{\clC^{1}}\norm{Z}{H}^2+2(\clB(\underline y)\varkappa_2+h_{\beta,K},Z)_{H}.
\notag
\end{align}
By~\eqref{bdd_underw} and~$\varkappa_2\in\clC^0([0,T],\clC^{2,1}(\overline\Omega))$ we have that
\begin{align}
\tfrac{\rmd}{\rmd t}\norm{Z}{H}^2&\le C_1\norm{Z}{L^2}^2+2(h,Z)_{L^2},
\qquad
\mbox{with}\quad h\coloneqq\clB(\underline y)\varkappa_2+h_{\beta,K},\label{dtZimit-p1}
\end{align}
for some constant~$C_1=C_1(\beta)>0$ independent of~$K$. By the Young inequality, we find
\begin{equation}
\tfrac{\rmd}{\rmd t}\norm{Z}{H}^2
\le (C_1+1)\norm{Z}{H}^2 +\norm{h}{H}^{2}\notag
\end{equation}
and, by the Gronwall inequality (in differential form~\cite[Ch.~3, Sect.~1.1.3]{Temam97}), we obtain
\begin{equation}
\norm{Z(t)}{H}^2\le\rme^{(C_1+1)T}\int_0^T\norm{h(s)}{H}^{2}\,\rmd s\le C_2,\qquad t\in[0,T],\label{Zimit-bdd-H}
\end{equation}
with~$C_2$ independent of~$K$; due to~\eqref{Fcontrol}, ~\eqref{osccontrol}, \eqref{hbK}, \eqref{bdd_underw}, and~\eqref{dtZimit-p1}.

However, note that~\eqref{Zimit-bdd-H} does not give us  yet that at final time~$t=T$, 
$\norm{Z(T)}{H}^2$ is small for large~$K$, as wanted. To this purpose, we need some further estimates. We start with an estimate for the vorticity function~$w\coloneqq (\curl Z$; from~\eqref{dyn.Zimity2} and~\eqref{dtZimit-p1}, we find
\begin{align}
\dot w&=-Z\cdot\nabla w+(\varkappa_2-\underline y)\cdot\nabla w+Z\cdot\nabla(\curl(\varkappa_2-\underline y)+(\curl h,\notag
\end{align}
which we test with~$2w$ to obtain
\begin{align}
\tfrac{\rmd}{\rmd t}\norm{w}{L^2}^2&=2( Z\cdot\nabla\nabla^\perp(\varkappa_2-\underline y),w)_{L^2}+2((\curl h,w)_{L^2}.\notag
\end{align}
By~\eqref{bdd_underw} and~$\varkappa_2\in\clC^0([0,T],\clC^{2,1}(\overline\Omega))$, we find~$\nabla\nabla^\perp(\varkappa_2-\underline y)(t)\in\clC^0(\overline\Omega)$, and by~\eqref{Zimit-bdd-H},
\begin{align}
\tfrac{\rmd}{\rmd t}\norm{w}{L^2}^2&\le 2\norm{\nabla\nabla^\perp(\varkappa_2-\underline y)}{\clC^{0}}\norm{Z}{H}\norm{w}{L^2}+2\norm{(\curl h}{L^2}\norm{w}{L^2}\notag\\
&\le 2C_3\norm{w}{L^2}+2\norm{\diver h}{L^2}\norm{w}{L^2}\le \norm{w}{L^2}^2+2(C_3^2+\norm{\diver h}{L^2}^2)\notag
\end{align}
with~$C_3$ independent of~$K$.  Then, the Gronwall inequality leads to
\begin{align}
\norm{w(t)}{L^2}^2&\le 2\rme^{T}\int_0^TC_3^2+\norm{\diver h(s)}{L^2}^2\,\rmd s\le C_4\label{w.Zimity-bdd-L2}
\end{align}
with~$C_4$ independent of~$K$.
Now, from~\eqref{Zimit-bdd-H}, \eqref{w.Zimity-bdd-L2}, and Lemma~\ref{L:vect-vort-est}, it follows
\begin{align}
\norm{Z(t)}{W^{1,2}(\Omega)}^2&\le  C_5\label{Zimity-bdd-H1}
\end{align}
with~$C_5$ independent of~$K$.

Next, by~\eqref{osccontrol} and the Gronwall inequality (\cite[Ch.~III, Sect.~1.1.3, Ineq.~(1.27)]{Temam97}) applied to~\eqref{dtZimit-p1}, we obtain
\begin{subequations}\label{ZoscAuxest-R2}
\begin{align}
\norm{Z(t)}{H}^2 &\le \int_0^t\sin(\tfrac{\pi K s}{T})\rme^{ C_1(t-s)} G_1(s)\,\rmd s+\int_0^t \cos(\tfrac{2\pi K s}{T})\rme^{ C_1(t-s)}G_2(s)\,\rmd s,\label{ZoscAuxest-R2int}\\
\quad\mbox{with}\hspace{2em}
G_1&\coloneqq 2^\frac32(\clB(\underline y)(\beta^{-1} v_n(t)\psi_n-\beta \phi_n),Z)_H,\\
G_2&\coloneqq2(\beta^{-2}\varkappa_1 -\varrho_n+\beta^{2}B(\phi_n,\phi_n),Z)_H.
\end{align}
\end{subequations}

Our next step is to we show that~$\{G_1,G_2\}\subset W^{1,1}(0,T)$. Denoting, for simplicity,
\begin{equation}\notag
\Psi_{n,\beta}\coloneqq\beta^{-1} v_n(t)\psi_n-\beta \phi_n,
\end{equation}
we observe that, using~\eqref{estTrilin-h.C1.h} and~\eqref{bdd_underw},
\begin{subequations}\label{Gs-in-W12-1}
\begin{align}
\norm{G_1}{\bbR}&\le2^\frac32C_\bfb\left(\norm{\Psi_{n,\beta}}{\clC^1(\overline\Omega)}\norm{\underline y}{H}+\norm{\Psi_{n,\beta}}{H}\norm{\underline y}{\clC^1(\overline\Omega)}\right)\norm{Z}{H}\le C_6\norm{Z}{H},\\
\norm{G_2}{\bbR}&\coloneqq2\left(\norm{\varrho_n}{H}+\beta^{-2}\norm{v_n}{L^\infty(0,T)}^2 C_\bfb\norm{\psi_n}{\clC^1(\overline\Omega)}\norm{\psi_n}{H}+\beta^{2}C_\bfb\norm{\phi_n}{\clC^1(\overline\Omega)}\norm{\phi_n}{H}\right)\norm{Z}{H}\notag\\
&\le C_7\norm{Z}{H},
\end{align}
\end{subequations}
with~$C_6$ and~$C_7$ independent of~$K\in\bbN_+$. Furthermore,
by~\eqref{ZoscAuxest-R2} and~\eqref{Gs-in-W12-1}, we find
\begin{align}
\norm{Z(t)}{H}^2 &\le C_8\int_0^t\norm{G_1(s)}{\bbR}+\norm{G_2(s)}{\bbR}\,\rmd s\le C_{9}\int_0^t 1+\norm{Z(s)}{H}^2\,\rmd s\notag\\
&\le C_{9}t+C_{9}\int_0^t \norm{Z(s)}{H}^2\,\rmd s\notag
\end{align}
and, by the  Gronwall inequality (in integral form~\cite[Eqs.~(5)--(6)]{Gronwall19}) it follows
\begin{align}\label{ZbddV'}
\norm{Z(t)}{H}^2 &\le  C_{9}t\rme^{C_{9}t}\le  C_{9}T\rme^{C_{9}T},\quad\mbox{for all }t\in(0,T).
\end{align}

By~\eqref{Gs-in-W12-1}, \eqref{ZbddV'}, and~\eqref{bdd_underw}, it follows that
\begin{align}\label{Gs-in-W12-1a}
\norm{G_1(t)}{\bbR}+\norm{G_2(t)}{\bbR}&\le C_{10},\quad\mbox{for all } t\in(0,T).
\end{align}

Next, for the time derivatives, we write
\begin{subequations}\label{dtG12}
\begin{align}
2^{-\frac32}\dot G_1&= \bfb(\dot\Psi_{n,\beta},\underline y,Z)+\bfb(\underline y,\dot\Psi_{n,\beta},Z)+\bfb(\Psi_{n,\beta},\dot{ \underline y},Z)\notag\\
&\quad+\bfb(\dot{\underline y},\Psi_{n,\beta},Z)+\bfb(\underline y,\Psi_{n,\beta}, \dot Z)+\bfb(\Psi_{n,\beta},\underline y, \dot Z),\\
2^{-1}\dot G_2&=(\beta^{-2}\dot\varkappa_1 -\dot\varrho_n,Z)_H+(\beta^{-2}\varkappa_1 -\varrho_n,\dot Z)_H+\beta^{2}\bfb(\phi_n,\phi_n,\dot Z).
\end{align}
\end{subequations}

Using~\eqref{estTrilin-h.C1.h}, \eqref{bdd_underw} and~\eqref{ZbddV'}, 
\begin{subequations}\label{dtG1-aux}
\begin{align}
&\norm{\bfb(\dot\Psi_{n,\beta},\underline y,Z)+\bfb(\underline y,\dot\Psi_{n,\beta},Z)}{\bbR}\notag\\
&\hspace{1em}\le C_\bfb\norm{\dot\Psi_{n,\beta}}{H}\norm{\underline y}{\clC^1(\overline\Omega)}\norm{Z}{H}+C_\bfb\norm{\underline y}{H}\norm{\dot\Psi_{n,\beta}}{\clC^1(\overline\Omega)}\norm{Z}{H}\le C_{11},\\
&\norm{\bfb(\Psi_{n,\beta},\dot{ \underline y},Z)+\bfb(\dot{ \underline y},\Psi_{n,\beta},Z)}{\bbR}\notag\\
&\hspace{1em}\le C_\bfb\norm{\Psi_{n,\beta}}{H}\norm{\dot{\underline y}}{\clC^1(\overline\Omega)}\norm{Z}{H} +C_\bfb\norm{\dot{\underline y}}{H}\norm{\Psi_{n,\beta}}{\clC^1(\overline\Omega)}\norm{Z}{H}
\le C_{11}
 \intertext{and, using~\eqref{estTrilin-C1.C2.V'},}
 &\norm{\bfb(\underline y,\Psi_{n,\beta},\dot Z)+\bfb(\Psi_{n,\beta},\underline y,\dot Z)}{\bbR}\notag\\
&\hspace{1em}\le C_\bfb\norm{(\underline y, \nabla\underline y)}{\clC^1(\overline\Omega)}\norm{(\Psi_{n,\beta},\nabla\Psi_{n,\beta})}{\clC^1(\overline\Omega)}\norm{\dot Z}{V'}\le C_{11}\norm{\dot Z}{V'},
\end{align}
with~$C_{11}$ independent of~$K\in\bbN_+$.
\end{subequations}
Next, by~\eqref{dyn.Zimity2} and~\eqref{estTrilin-l4.V.l4}, with~$h_{\beta,K}$ as in~\eqref{hbK},
\begin{align}
\norm{\dot Z}{V'}&\le \norm{B(Z,Z)}{V'}+\norm{\clB(\varkappa_2-\underline y)Z}{V'} +\norm{\clB(\underline y)\varkappa_2+h_{\beta,K}}{V'}\notag\\
&\le C_\bfb(\norm{Z}{L^4}+2\norm{\varkappa_2-\underline y}{L^4})\norm{Z}{L^4}+2C_\bfb\norm{\underline y}{L^4}\norm{\varkappa_2}{L^4}+\norm{h_{\beta,K}}{V'}\notag\\
&\le C_{12}(\norm{Z}{W^{1,2}}+2\norm{\varkappa_2-\underline y}{\clC^0})\norm{Z}{W^{1,2}}+ C_{12}\norm{\underline y}{\clC^0}\norm{\varkappa_2}{\clC^0}+\norm{h_{\beta,K}}{V'}\le C_{13},\label{dyn.Zimity-dt}
\end{align}
with~$C_{13}$ independent of~$K\in\bbN_+$, where we have used~\eqref{Zimity-bdd-H1} together with the Sobolev embedding~$W^{1,2}(\Omega)\xhookrightarrow[]{} L^4(\Omega)$ holding for bounded domains~$\Omega\subset\bbR^2$; see~\cite[Cor.~4.53]{DemengelDem12}.

By~\eqref{dtG12} and~\eqref{dtG1-aux} it follows that
\begin{subequations}\label{bdd-dtG}
\begin{equation}
\norm{\dot G_1(t)}{\bbR}\le C_{14},\quad\mbox{for all } t\in(0,T),
\end{equation}
with~$C_{14}$ independent of~$K\in\bbN_+$.
Finally, by~\eqref{dtG12}, \eqref{estTrilin-C1.C2.V'}, \eqref{ZbddV'}, and~\eqref{dyn.Zimity-dt}, we also have, with~$\zeta_{n,\beta}\coloneqq\beta^{-2}\varkappa_1 -\varrho_n$,
\begin{align}
\norm{\dot G_2}{\bbR}&\le2 \left( \norm{\dot\zeta_{n,\beta}}{H}\norm{Z}{H}+\norm{\zeta_{n,\beta}}{V}\norm{\dot Z}{V'}\right)+\beta^{2}\tfrac12C_\bfb\norm{(\phi_n,\nabla\phi_n)}{\clC^1}^2\norm{\dot Z}{V'}\le C_{15},
\end{align}
for all~$t\in(0,T)$, with~$C_{15}$ independent of~$K\in\bbN_+$.
\end{subequations}
By combining~\eqref{Gs-in-W12-1a} with~\eqref{bdd-dtG} we find
\begin{align}
\norm{\rme^{ C_3(t-\Bigcdot)}G_1}{W^{1,\infty}(0,T)}+\norm{\rme^{ C_3(t-\Bigcdot)}G_2}{W^{1,\infty}(0,T)}\le C_{16},\notag
\end{align}
 with a constant~$C_{16}$  independent of~$K\in\bbN_+$,
Hence, by using~\eqref{ZoscAuxest-R2} and Lemma~\ref{L:intsincosKTheta}, 
\begin{align}
\norm{Z(T)}{H}^2 &\le 2\tfrac{(\pi C+1)T}{\pi} C_{16}T K^{-1}\notag
\end{align}
with~$C=\norm{\Id}{\clL(W^{1,1}(0,T),L^\infty(0,T))}$. Therefore, now we can choose ~$K$ large enough so that $\norm{Z(T)}{H}\le\frac{1}{4n\overline\jmath}\varepsilon$, which combined with~\eqref{AC-imitation-1} and~\eqref{dyn.Zimity1-ifcond}, leads to the estimates
\begin{equation}\notag
\norm{\fkY(y_0;0,f+\fkF_{\beta,K}(u^j))(T)-y_\tta}{H}\le \norm{\overline z(T)}{H}+\norm{\underline y(T)-y_\tta}{H}\le\left(\tfrac{2\overline\jmath-j}{2\overline\jmath}+\tfrac{1}{2n\overline\jmath}\right)\varepsilon.
\end{equation}

\subsubsection{Iteration towards the control taking values in~$\clU+\clG^{j-1}$}
Observe that we can write the control force in~\eqref{osccontrol} as
\begin{equation}\notag
\fkF_{\beta,K}(u)(t)\eqqcolon \bbF_{{m_j}-1}^\diamond \overline u(t)={\textstyle\sum\limits_{j=1}^{{m_j}-1}} \overline u_j\Phi_j,
\end{equation}
where~$\overline u(t)\in\clC^\infty([0,T],\bbR^{{m_j}-1})$ and with the~$\Phi_j$ as in~\eqref{bbF-osc}.
 In other words we have an analogue of~\eqref{AC-Hypo-imitation} as
\begin{equation}\notag
\overline u\in\clC^\infty([0,T],\bbR^{{m_j}-1})\quad\mbox{and}\quad
\norm{\fkY(y_0;0,f+ \bbF_{{m_j}-1}^\diamond \overline u)(T)-y_\tta}{H}\le \left(\tfrac{2\overline\jmath-j}{2\overline\jmath}+\tfrac{1}{2n\overline\jmath}\right)\varepsilon.
\end{equation}

Repeating the arguments,~$n-1$ times more, where~$n=\dim\clG^{j}_{j-1}$, we arrive at a control input giving us the analogue of~\eqref{AC-Hypo-imitation} as follows, 
\begin{equation}\label{AC-Thes-imitation}
\stepcounter{equation}\tag{thes:\theequation}
\begin{split}
&\hspace{-.5em}\linspan\bbF_{m_{j-1}}=\clU+\clG^{j-1},\quad u^{j-1}\in\clC^\infty([0,T],\bbR^{m_{j-1}}),\quad m_{j-1}=\dim(\clU+\clG^{j-1}),\\
&\hspace{-.5em}\norm{\fkY(y_0;0,f+\bbF_{m_{j-1}}^\diamond u^{j-1})(T)-y_\tta}{H}\le \left(\tfrac{2\overline\jmath-j}{2\overline\jmath}+\tfrac{n}{2n\overline\jmath}\right)\varepsilon=\tfrac{2\overline\jmath-(j-1)}{2\overline\jmath}\varepsilon.
\end{split}
\end{equation}
In particular, since~$(\overline\jmath,j)$ is fixed and the tuple in~\eqref{fixedtuple} has been arbitrarily fixed, we can conclude that the Euler system~\eqref{evol-sys-bfc} is $(T,\fkU)$-approximately controllability with~$\fkU=\clC^\infty([0,T],\clU+\clG^{j-1})$.
In other words, we have shown that we have the implication
\begin{equation}\label{AC-ok-imitation}
\mbox{\eqref{AC-Hypo-imitation}\quad$\Longrightarrow$\quad \eqref{AC-Thes-imitation}},\qquad\mbox{where}\quad\overline\jmath\ge j\ge1.
\end{equation}

\subsection{$(T,\fkU)$-approximately controllability with~$\fkU=\clC^\infty([0,T],\clU)$}\label{sS:proofACiteration}
Here, we complete the proof of Theorem~\ref{T:approxcontrol} by using~\eqref{AC-ok-density} and by iterating the implication in~\eqref{AC-ok-imitation}.

Let~$\varepsilon>0$.
From~\eqref{AC-ok-density} we can find~${\overline\jmath}\in\bbN$ large enough and a control input~$u^{\overline\jmath}\in\clC^{\infty}([0,T],\bbR^{m_{\overline\jmath}})$ so that
\begin{equation}\notag
\begin{split}
&\norm{\fkY(y_0;0,f+\bbF_{m_{\overline\jmath}}^\diamond u^{\overline\jmath})(T)-y_\tta}{H}\le\tfrac12\varepsilon,\quad\mbox{with}\quad\bbF_{m_{\overline\jmath}}^\diamond u^{\overline\jmath}(t)\in
\clU+\clG^{\overline\jmath},
\end{split}
\end{equation}
where~$m_{\overline\jmath}=\dim(\clU+\clG^{\overline\jmath})$; see~\eqref{AC-ok-imitation}. If~${\overline\jmath}=0$ the proof of Theorem~\ref{T:approxcontrol} is finished. If~${\overline\jmath}\ge1$ we observe that~\eqref{AC-Hypo-imitation} holds with~$j=\overline\jmath$, and we use the implication~\eqref{AC-ok-imitation} to conclude the existence of a control input~$ u^{\overline\jmath-1}\in\clC^{\infty}([0,T],\bbR^{m_{\overline\jmath-1}})$ so that
\begin{equation}\notag
\begin{split}
&\norm{\fkY(y_0;0,f+\bbF_{m_{\overline\jmath-1}}^\diamond u^{\overline\jmath-1})(T)-y_\tta}{H}\le\tfrac{2\overline\jmath-(\overline\jmath-1)}{2\overline\jmath}\varepsilon,\quad\mbox{with}\quad\bbF_{m_{\overline\jmath-1}}^\diamond u^{\overline\jmath-1}(t)\in
\clU+\clG^{\overline\jmath-1}.
\end{split}
\end{equation}
Using the same implication $\overline\jmath-1$ times more, we arrive at an input~$u^0\in\clC^{\infty}([0,T],\bbR^{m_{0}})$, where~$m_0=\dim(\clU+\clG^{0})$,  so that
\begin{equation}\notag
\begin{split}
&\norm{\fkY(y_0;0,f+\bbF_{m_0}^\diamond u^{0})(T)-y_\tta}{H}\le\tfrac{2\overline\jmath-0}{2\overline\jmath}\varepsilon=\varepsilon,\quad\mbox{with}\quad \bbF_{m_0}^\diamond u^{0}(t)\in
\clU+\clG^{0}=\clU.
\end{split}
\end{equation}
Recall that~$\clG^{0}=\linspan G_0$ and~$U=G_0\bigcup \{\bfB(h,h)\mid h\in G_0\}$, thus~$\clU+\clG^{0}=\clU=\linspan U$. Then, with~$U= \bbF_{m_0}$ and~$u= u^{0}\in\clC^{\infty}([0,T],\bbR^{m_{0}})$, we can write
\begin{equation}\notag
\begin{split}
&\norm{\fkY(y_0;0,f+U^\diamond u)(T)-y_\tta}{H}\le\varepsilon,\quad\mbox{with}\quad U^\diamond u(t)\in
\clU.
\end{split}
\end{equation}

Therefore, since the tuple in~\eqref{fixedtuple} has been arbitrarily fixed, we can conclude that
the Euler system~\eqref{evol-sys-bfc} is $(T,\fkU)$-approximately controllability with~$\fkU=\clC^\infty([0,T],\clU)$. This finishes the proof of Theorem~\ref{T:approxcontrol}.
\qed

\section{Proof of Theorem~\ref{T:saturating}: reduction to vorticity formulation}\label{S:proofT:saturating-vort-form}

We start by noticing that the unit disk~$\Omega=\clD$ in Theorem~\ref{T:saturating} is simply connected. In this case the $\curl$ mapping in Lemma~\ref{L:vect-vort-est} is a bijection from~$H\bigcap W^{1,2}(\Omega)$ onto~$L^2(\Omega)$ and, by this reason, it is equivalent and convenient to work with the vorticity formulation. We start by looking at the dynamics of the vorticity function~$w\coloneqq \curl y$ of the vector field~$y$ satisfying~\eqref{evol-sys-U}, and by writing Theorem~\ref{T:saturating} in vorticity formulation.
 
%
\subsection{Solenoidal velocity fields and their vorticity}
Recalling the space~$H$ of solenoidal vector fields~\eqref{H}, and its subspace~$V=H\bigcap W^{1,2}(\Omega)^2$ in~\eqref{V}, with scalar product as in~\eqref{scalarprodV},
we define the Leray projection~$\Pi$ in~$(W^{1,2})^2=W^{1,2}(\Omega)^2$ as
\begin{subequations}\label{LerayPi}
\begin{align}
&\Pi\in\clL((W^{1,2})^2,V),\qquad\Pi z\coloneqq z+\nabla\phi_z,\\
\intertext{with~$\phi_z$ defined by}
&\Delta\phi_z=-\diver z,\qquad\bfn\cdot\nabla\phi_z\rest{\p\clD}=-\bfn\cdot z\rest{\p\clD}.
\end{align}
We can extend this projection to~$(W^{1,2}(\Omega)^2)'$ as
\begin{align}
&\Pi^\rme\in\clL(((W^{1,2})^2)',V'),&&\langle\Pi^\rme w,v\rangle_{V',V}\coloneqq\langle w, v\rangle_{((W^{1,2})^2)',(W^{1,2})^2}.
\end{align}
\end{subequations}

The latter is an extension of the former because, for~$w\in (W^{1,2})^2$, 
\begin{align}
\langle\Pi^\rme w,v\rangle_{V',V}=(w, v)_{(L^2)^2}=(\Pi w-\nabla\phi_w, v)_{(L^2)^2}=(\Pi w, v)_{H}=\langle\Pi w,v\rangle_{V',V}.\notag
\end{align}

Hereafter we shall denote the extension~$\Pi^\rme$ by~$\Pi$. Further, from~\cite[Ch.~1, Thm.~1.4]{Temam01} we have a ``characterization'' of the orthogonal complement~$H^\perp$ to~$H$ in~$(L^2)^2$ as~$H^\perp=\nabla W^{1,2}$. This means that the relation~$z=P_H z-\nabla\psi_z$ also holds, for~$z\in(L^2)^2$ with~$P_H z\in H$ and~$\psi_z\in W^{1,2}$. That is,  $P_H$ is the orthogonal projection in~$(L^2)^2$ onto~$H$ and extends of~$\Pi$. Note also that~$\Pi^\rme$ is an extension of~$P_H$ because, for~$w\in H\times H$
\begin{align}
\langle\Pi^\rme w,v\rangle_{V',V}=(w, v)_{(L^2)^2}=(P_H  w-\nabla\psi_w, v)_{(L^2)^2}=(P_H w, v)_{H}=\langle P_H  w,v\rangle_{V',V}.\notag
\end{align}

Next, we consider the adjoint of the $\curl$ operator in~\eqref{curl},
\begin{equation}\label{curl*}
 \curl^*\colon L^2\mapsto V',\qquad \langle\curl^* g, v\rangle_{V',V}\coloneqq (g,\curl v)_{L^2},
\end{equation}
and note that
 \begin{equation}\label{curl*form}
\curl^* g=(-\tfrac{\p g}{\p x_2}, \tfrac{\p g}{\p x_1}),\quad \mbox{for}\quad g\in W^{1,2}_0,
\end{equation}
where
\begin{equation}\label{W10}
W^{1,2}_0\coloneqq\{h\in W^{1,2}\mid h\rest{\p\clD}=0\}\subset W^{1,2}.
\end{equation}
We further introduce the (negative) Dirichlet Laplacian
\begin{equation}\label{dirLap}
-\Delta_\rmd\colon W^{1,2}_0\to W^{-1,2},\qquad \langle -\Delta_\rmd h, g\rangle_{W^{-1,2},W^{1,2}_0}\coloneqq(\curl^* h,\curl^*g )_{H}.
\end{equation}
with~$W^{-1,2}\coloneqq(W^{1,2}_0)'$. 
Note that~$(\curl^* h,\curl^*g )_{H}=(\nabla h,\nabla g )_{(L^2)^2}$. The domain is
\begin{equation}\label{DA}
\rmD(-\Delta_\rmd)\coloneqq\{h\in L^2\mid -\Delta_\rmd y\in L^2\}= W^{1,2}_0{\,\textstyle\bigcap\,} W^{2,2}.
\end{equation}

For each vector field~$y\in V$, we define the stream function~$\phi^y\in \rmD(-\Delta_\rmd)$ as the solution of
$-\Delta_\rmd\phi^y=\curl y$, that is,
\begin{equation}\label{stream_fun}
\phi^y=(-\Delta_\rmd)^{-1}\curl y.
\end{equation}

 Further, we observe that
\begin{equation}\notag
\curl\curl^*\phi^y=-\Delta_\rmd \phi^y=\curl y,\qquad \diver\curl^*\phi^y=0,\qquad \bfn\cdot\curl^*\phi^y\rest{\p\clD}=0.
\end{equation}
Hence, for the difference~$z\coloneqq y-\curl^*\phi^y$ we find the relations
\begin{equation}\notag
\curl z=0,\qquad \diver z=0,\qquad \bfn\cdot z\rest{\p\clD}=0.
\end{equation}
Since~$\clD$ is simply connected we find~$z=0$ (cf.~\cite[App.~I, Sect.~1, Prop.1.4 and Lem.~1.6]{Temam01}, \cite[Thm.~1]{AuchAlex01}).
Hence, we obtain~$y=\curl^*\phi^y=\curl^*(-\Delta_\rmd)^{-1}\curl y$ and 
\begin{align}\notag
 \curl B(y,y)=\curl(\langle y\cdot\nabla\rangle y) =y\cdot\nabla \curl y=(\curl^*(-\Delta_\rmd)^{-1}\curl y)\cdot\nabla \curl y
\end{align}
(cf.~\cite[Eq.~(2.23)]{ClopeauMikRob98},~\cite[Ch.~I, Sect.~11.A]{ArnKhesin98}), which we can write as
\begin{align}\label{curl-nonl}
 \curl B(y,y)=\curl(\langle y\cdot\nabla\rangle y) =\{(-\Delta_\rmd)^{-1}\curl y, \curl y\}_\fkp,
\end{align}
with the bilinear Poisson bracket on~$W^{1,2}$ defined as
\begin{equation}\label{poisson_b}
\begin{split}
&\{\Bigcdot , \Bigcdot\}_\fkp\colon W^{1,2}\times W^{1,2}\to L^1,\\
&\{h, g\}_\fkp\coloneqq(\curl^*h,\nabla g)_{\bbR^2}=-\frac{\p h}{\p x_2}\frac{\p g}{\p x_1}+\frac{\p h}{\p x_1}\frac{\p g}{\p x_2}.
\end{split}
\end{equation}

From~\eqref{evol-sys-U} and~\eqref{curl-nonl}, we find the evolutionary equation
\begin{subequations}\label{vort-sys}
\begin{align}
&\dot{w}+ \{(-\Delta_\rmd)^{-1}w,w\}_\fkp =f^\rmc+
B^\rmc u,\qquad w(0)=w_0,
\end{align}
 for the vorticity~$w=\curl y$, with~$w_0\coloneqq\curl u_0$ and 
\begin{align}
f^\rmc\coloneqq\curl f,\qquad B^\rmc\coloneqq\curl\circ U^\diamond,\quad B^\rmc u\coloneqq \sum_{k=1}^M z_k\phi_k,\quad\phi_k\coloneqq\curl F_k,\quad F_k\in U.
\end{align}
\end{subequations}

We want to write the analogue of Definition~\ref{D:saturVlin} in terms of vorticity functions. For this purpose we start by introducing the operator
  \begin{align}\label{Buz-vort}
\fkB(z)y&\coloneqq\{(-\Delta_\rmd)^{-1}h,g\}_\fkp-\{h,(-\Delta_\rmd)^{-1}g\}_\fkp
\end{align}
and, for subspaces~$Z\subseteq W^{-1,2}$,  $Y\subseteq W^{-1,2}$, and~$\clX\subseteq W^{-1,2}$, we also introduce the subspace
  \begin{align}\label{BZH-clH}
\fkB_\clX(Z)Y\coloneqq\linspan \{\clX{\,\textstyle\bigcap\,}\fkB(z)y\mid (z,y)\in Z\times Y \}.
\end{align}
\begin{definition}\label{D:saturVlin-vort}
Given a subspace~$\clX\subset W^{-1,2}$, a finite subset~$S_0\subset \clX$ is said $\clX$-saturating if the sequence~$(\clS^j)_{j\in\bbN}$ of subspaces~$\clS^j\subset \clX$ defined  by
\begin{align}\label{seqclS}
\clS^0&\coloneqq\linspan S_0,\qquad
\clS^{j+1}\coloneqq\clS^{j}+\fkB_\clX(\clS^0)\clS^{j},
\end{align}
is such that the union~${\textstyle\bigcup_{j\in\bbN}}\clS^j$ is dense in~$W^{-1,2}$. 
\end{definition}

 \begin{lemma}\label{L:saturVlin-vort}
 Let~$\Omega\subset\bbR^2$ be a bounded smooth simply-connected spatial domain. A set of vector fields~$G_0$ is $\clH$-saturating if, and only if,  the set of functions~$S_0\coloneqq \curl G_0$ is $(\curl \clH)$-saturating.
  \end{lemma}
The proof is given in the Appendix, Section~\ref{Apx:proofL:saturVlin-vort}.

\begin{theorem}\label{T:saturating-vort}
The set~$E$ in~\eqref{satset-vort} is~$\clC^{1,1}(\overline\clD)$-saturating.
\end{theorem}
The proof is given in Section~\ref{S:proofT:saturating-vort}.

\begin{lemma}\label{L:saturating-vort-clXs}
Let~$\clX_1$, $\clX_2$ be subspaces with continuous inclusions~$\clX_1\subseteq\clX_2\subset  W^{-1,2}$, a finite subset~$S_0\subset \clX_1$  is~$\clX_1$-saturating only if it is ~$\clX_2$-saturating.
\end{lemma}
\begin{proof}
It is sufficient to observe that the corresponding saturating sequences as in~\eqref{seqclS},
\[\clS_k^0=\clS^0\coloneqq\linspan S_0, \quad
\clS_k^{j+1}\coloneqq\clS_k^{j}+\fkB_{\clX_k}(\clS^0)\clS_k^{j}, \qquad k\in\{1,2\},
\] satisfy~$\clS_1^{j}\subseteq \clS_2^{j}$, which we can show by induction on~$j\in\bbN$.
\end{proof}

\subsection{Proof of Theorem~\ref{T:saturating}}
Straightforward by Theorem~\ref{T:saturating-vort} and Lemmas~\ref{L:saturVlin-vort} and~\ref{L:saturating-vort-clXs}.

\section{Proof of Theorem~\ref{T:saturating-vort}}\label{S:proofT:saturating-vort}

\subsection{Polar coordinates}\label{sS:auxRes}
It is convenient to consider polar coordinates $(r,\theta) \in [0,1) \times [0,2\pi)$
in the disc~$\clD$. 
From
\begin{equation}
x_1=r\cos(\theta),\qquad x_2=r\sin(\theta),\notag
\end{equation}
 relating Cartesian~$(x_1,x_2)$ and polar~$(r,\theta)$ coordinates, we can find that
 \begin{align} 
\tfrac{\p}{\p r}&=\tfrac{\p x_1}{\p r}\tfrac{\p}{\p x_1}+\tfrac{\p x_2}{\p r}\tfrac{\p}{\p x_2}
=\cos(\theta) \tfrac{\p}{\p x_1}+\sin(\theta)\tfrac{\p}{\p x_2},\notag\\
\tfrac{\p}{\p \theta}&=\tfrac{\p x_1}{\p \theta}\tfrac{\p}{\p x_1}+\tfrac{\p x_2}{\p \theta}\tfrac{\p}{\p x_2}
=-r\sin(\theta)\tfrac{\p}{\p x_1}+r\cos(\theta)\tfrac{\p}{\p x_2},\notag
\end{align}
 which we can rewrite as
\begin{align} 
\begin{bmatrix}\tfrac{\p}{\p r}\\\tfrac{\p}{\p \theta}\end{bmatrix}
&=\begin{bmatrix}\cos(\theta) & \sin(\theta)\\
-r\sin(\theta)& r\cos(\theta)\end{bmatrix}\begin{bmatrix}\tfrac{\p}{\p x_1}\\\tfrac{\p}{\p x_2} 
\end{bmatrix}\notag
\intertext{and, after inversion,}
\begin{bmatrix}\tfrac{\p}{\p x_1}\\\tfrac{\p}{\p x_2} 
\end{bmatrix}\notag
&=\frac1{r}
\begin{bmatrix}r\cos(\theta) & -\sin(\theta)\\
r\sin(\theta)& \cos(\theta)\end{bmatrix}
\begin{bmatrix}\tfrac{\p}{\p r}\\\tfrac{\p}{\p \theta}
\end{bmatrix}\notag
\end{align}
that is,
 \begin{equation}\label{grad-polar}
\tfrac{\p}{\p x_1}=\cos(\theta)\tfrac{\p }{\p r}-\tfrac1r\sin(\theta)\tfrac{\p}{\p  \theta},\qquad
\tfrac{\p}{\p x_2}=\sin(\theta)\tfrac{\p }{\p r}+\tfrac1r\cos(\theta)\tfrac{\p}{\p  \theta}.
\end{equation}

Now, for the Poisson bracket~\eqref{poisson_b}, by direct computations, we find
 \begin{align}
\{f, g\}_\fkp&=-\left(\sin(\theta)\tfrac{\p f}{\p r}+\tfrac1r\cos(\theta)\tfrac{\p f}{\p  \theta}\right)\left(\cos(\theta)\tfrac{\p g}{\p r}-\tfrac1r\sin(\theta)\tfrac{\p g}{\p  \theta}\right)\notag\\
&\quad+\left(\cos(\theta)\tfrac{\p f}{\p r}-\tfrac1r\sin(\theta)\tfrac{\p f}{\p  \theta}\right)\left(\sin(\theta)\tfrac{\p g}{\p r}+\tfrac1r\cos(\theta)\tfrac{\p g}{\p  \theta}\right)\notag,
  \end{align}
 which leads us to the expression
  \begin{align}\label{poisson_b-pol}
\{f, g\}_\fkp&=\frac1r\left(\frac{\p f}{\p  r}\frac{\p g}{\p  \theta}-\frac{\p f}{\p  \theta}\frac{\p g}{\p  r}\right).
  \end{align}
By direct computations, we also find  the known formal expression for the  Laplacian
\begin{equation}\label{A-pol}
\Delta=\frac{\partial^2} {\partial r^2}+\frac{1}{r}\frac{\partial }{\partial r}+\frac{1}{r^2}\frac{\partial^2 }{\partial \theta^2}.
  \end{equation}
  
 Since saturating sets are defined through the iteration~\eqref{seqclS}, we shall need suitable properties of the operator~$\fkB$ in~\eqref{Buz-vort}, involving Poisson brackets~$\{\Bigcdot,\Bigcdot\}_\fkp$ and~$(-\Delta_\rmd)^{-1}$. 
  
Recalling Definition~\ref{D:saturVlin-vort}, Lemma~\ref{L:saturVlin-vort}, and Theorem~\ref{T:saturating-vort}, we are looking for a finite set~$E\subset L^2$ such that its span~$\clS^0=\linspan E$, as used in the iterative process~\eqref{seqclS}, leads to the increasing sequence~$(\clS^j)_{j\in\bbN}$ as in Lemma~\ref{L:saturVlin-vort}. This process generates new functions through \begin{equation}\label{generator}
h\mapsto \fkB(g)h,\quad\mbox{with}\quad g\in\clS^0,
\end{equation}
thus we shall call the function~$g$ a generator function.

  \subsection{Dirichlet Laplacian for a class of functions}
 Here, we derive explicit expressions for~$(-\Delta_\rmd)^{-1}h$ and~$\{h,g\}_\fkp$,  for suitable functions~$h,g$.
We denote the functions
\begin{subequations}\label{cs_mn} 
\begin{align}
 \fkc_{(m,n)}=\fkc_{(m,n)}(r,\theta)&\coloneqq r^{2(m+n)}\cos(n\theta),&&\quad\mbox{for}\quad m\in\bbN,\quad n\in\bbN;\\
\fks_{(m,n)}=\fks_{(m,n)}(r,\theta)&\coloneqq r^{2(m+n)} \sin(n\theta),&&\quad\mbox{for}\quad m\in\bbN,\quad n\in\bbN_+;
  \end{align}
   \end{subequations} 
and the constant
\begin{equation}\label{xi_mn} 
  \xi_{(m,n)}\coloneqq4(m+n)^2-n^2,\quad\mbox{for}\quad m\in\bbN,\quad n\in\bbN.
  \end{equation}
 
Note that, from~\eqref{grad-polar} and~\eqref{cs_mn}, we can see that
 \begin{equation}\label{cs-in-W12} 
  \fkc_{(m,n)} \in\clC^{1,1}(\overline\clD)\quad\mbox{and}\quad  \fks_{(m,n)} \in\clC^{1,1}((\overline\clD),\quad\mbox{if~$m+n>0$}.
  \end{equation}

Recall that we distinguish between the formal Laplacian~$\Delta$ and the Dirichlet Laplacian~$\Delta_\rmd$. These operators coincide in~$\rmD(-\Delta_\rmd)$.
    \begin{lemma}\label{L:lap_cs}
    We have the  identities $\Delta \fkc_{(0,0)}=0$ and, for~$(m,n)\in\bbN\times\bbN$,
  \begin{align}
\Delta \fkc_{(m,n)}&=\xi_{(m,n)}\fkc_{(m-1,n)},&&\quad\mbox{if}\quad n+m\ge1,\notag\\
 \Delta \fks_{(m,n)}&=\xi_{(m,n)}\fks_{(m-1,n)},&&\quad\mbox{if}\quad n\ge1.\notag
  \end{align}   
 \end{lemma}   
  \begin{proof}
It is straightforward to see that~$\fkc_{(0,0)}=1$ is a constant function and that~$\Delta \fkc_{(0,0)}=0$, using~\eqref{A-pol}.
Next,  for ~$n+m\ge1$ we find, using again~\eqref{A-pol},
    \begin{align}
   \Delta \fkc_{(m,n)}&= \Bigl(2(n+m)(2(n+m)-1) +2(n+m)-n^2\Bigr) r^{2(n+m)-2}\cos(n\theta)\notag\\
  &=\xi_{(m,n)}\fkc_{(m-1,n)}.\notag
    \end{align}   
Similarly, we can find~$\Delta \fks_{(m,n)}=\xi_{(m,n)}\fks_{(m-1,n)}$.
   \end{proof}

     \begin{corollary}\label{C:invlap_cs}
    We have the  identities
  \begin{align}
  &(-\Delta_\rmd)^{-1}(\xi_{(m+1,0)}r^{2m})=(1-r^{2(m+1)})\in W^{1,2}_0,\qquad\mbox{for}\quad m\in\bbN.\notag
  \intertext{Further, in the case~$n\ge1$ we have that}
    &(-\Delta_\rmd)^{-1}( \xi_{(m+1,n)}\fkc_{(m,n)}-\xi_{(k+1,n)}\fkc_{(k,n)})= r^{2n}(r^{2(k+1)}-r^{2(m+1)})\cos(n\theta)\in W^{1,2}_0,\notag\\
 &(-\Delta_\rmd)^{-1}( \xi_{(m+1,n)}\fks_{(m,n)}-\xi_{(k+1,n)}\fks_{(k,n)})= r^{2n}(r^{2(k+1)}-r^{2(m+1)})\sin(n\theta)\in W^{1,2}_0,\notag    
  \end{align}   
  for all~$k\in\bbN$ and~$m\in\bbN$.
\end{corollary}   
\begin{proof}
Straightforward, from Lemma~\ref{L:lap_cs}. 
\end{proof}  
  
  \subsection{Poisson bracket. Involving the generator with vanishing periodic frequency.} 
We consider~\eqref{generator} with the generator~$1-r^2\in\clS^0=\linspan E$ independent of~$\theta$; see~\eqref{satset-vort}.
 \begin{lemma}\label{L:poiss_cs0km}
    We have the  identities
  \begin{subequations}
  \begin{align}
  &\fkB(1-r^2)\fkc_{(m,0)}=0,\quad\mbox{for all~$m\in\bbN$}.\notag
  \intertext{Further, for all~$n\in\bbN_+$ and all~$(m,k)\in\bbN\times\bbN$, }
 &\fkB(1-r^2) (\xi_{(m+1,n)}\fkc_{(m,n)}-\xi_{(k+1,n)}\fkc_{(k,n)})\notag\\
 &\hspace{3em}=+\left(\frac{n}4\left(8-\xi_{(m+1,n)}\right) r^{2(m+1+n)}+\frac{n}2\xi_{(m+1,n)}r^{2(m+n)}\right)\sin(n\theta)\notag\\
&\hspace{3em}\quad-\left(\frac{n}4\left(8-\xi_{(k+1,n)}\right)r^{2(k+1+n)}+\frac{n}2\xi_{(k+2,n)}r^{2(k+n)}\right)\sin(n\theta);\label{poiss_c0km}\\
  & \fkB(1-r^2) (\xi_{(m+1,n)}\fks_{(m,n)}-\xi_{(k+1,n)}\fks_{(k,n)})\notag\\
  &\hspace{3em}=-\left(\frac{n}4\left(8-\xi_{(m+1,n)}\right) r^{2(m+1+n)}+\frac{n}2\xi_{(m+1,n)}r^{2(m+n)}\right)\cos(n\theta)\notag\\
&\hspace{3em}\quad+\left(\frac{n}4\left(8-\xi_{(k+1,n)}\right)r^{2(k+1+n)}+\frac{n}2\xi_{(k+1,n)}r^{2(k+n)}\right)\cos(n\theta)\label{poiss_s0km}
  \end{align}   
  \end{subequations}
Furthermore, we have that~$8-\xi_{(m+1,n)}<0$, for all~$(n,m)\in\bbN_+\times\bbN$.
    \end{lemma}   
 The proof is given in the Appendix, Section~\ref{Apx:proofL:poiss_cs0km}.

  \subsection{Poisson bracket. Involving generators with nonzero periodic frequency.} 
  We consider~\eqref{generator} with the generators in~$\{\fkc_{(0,k)},\fkc_{(2,k)}\}\subset\clS^0$, with~$k\in\{1,2\}$. 
  
The  following result tells us how to generate  factors~$\sin((n+k)\theta)$, from available lower frequency factors~$\cos(k\theta)$ and~$\cos(n\theta)$.
 \begin{lemma}\label{L:poiss(n+k)_cc_gen}
 Let us denote, for~$(n,k)\in\bbN_+\times\bbN_+$, 
\begin{align}
\psi_k&\coloneqq\xi_{(2,k)}\fkc_{(1,k)}-\xi_{(1,k)}\fkc_{(0,k)}\quad \mbox{and}\quad
\phi_{(m,n)}\coloneqq\xi_{(m+1,n)}\fkc_{(m,n)}-\xi_{(1,n)}\fkc_{(0,n)}.\notag
\end{align}
Then, we have that
 \begin{align}\notag
 & r^{-2(n+1)}\clB(\psi_k) \phi_{(m,n)}=Q^{(n,k)}_{+,m}(r^2)\sin((n+k)+Q^{(n,k)}_{-,m}(r^2)\sin((n-k)\theta)
  \end{align}   
 with polynomial functions~$Q^{(n,k)}_{+,m}$ and~$Q^{(n,k)}_{-,m}$ of degree~$m+1$ as follows,
 \begin{align}\notag
Q^{(n,k)}_{+,m}(s)&={C}_{+,1,m}^{(n,k)}s^{m+1}+C_{+,0,m}^{(n,k)}s^{m}+C_{+,1}^{(n,k)}s+C_{+,0}^{(n,k)},\notag\\
Q^{(n,k)}_{-,m}(s)&={C}_{-,1,m}^{(n,k)}s^{m+1}+C_{-,0,m}^{(n,k)}s^{m}+C_{-,1}^{(n,k)}s+C_{-,0}^{(n,k)},\notag
\intertext{with coefficients}
{C}_{+,1,m}^{(n,k)}&\coloneqq  (2n-km)\xi_{(m+1,n)}+(k(m+1)-n)\xi_{(2,k)},\notag\\
C_{+,0,m}^{(n,k)}&\coloneqq  (km-n)\xi_{(m+1,n)}-k(m+1)\xi_{(1,k)},\notag\\
C_{+,1}^{(n,k)}&\coloneqq-2n\xi_{(1,n)}+(n-k)\xi_{(2,k)},\notag\\
C_{+,0}^{(n,k)}&\coloneqq n\xi_{(1,n)}+k\xi_{(1,k)},\notag
\intertext{and}
{C}_{-,1,m}^{(n,k)}&\coloneqq  (k(m+2n)+2n)\xi_{(m+1,n)}-(k(m+1+2n)+n)\xi_{(2,k)},\notag\\
C_{-,0,m}^{(n,k)}&\coloneqq  -(k(m+2n)+n)\xi_{(m+1,n)}+k(m+1+2n)\xi_{(1,k)},\notag\\
C_{-,1}^{(n,k)}&\coloneqq-2n(k+1)\xi_{(1,n)}+(k(2n+1)+n)\xi_{(2,k)},\notag\\
C_{-,0}^{(n,k)}&\coloneqq n(2k+1)\xi_{(1,n)}-k(2n+1)\xi_{(1,k)},\notag
   \end{align}     
where the function~$\xi$ is as in~\eqref{xi_mn}.
\end{lemma} 
The proof  is given in the Appendix, Section~\ref{Apx:proofL:poiss(n+k)_cc_gen}. 

Analogously, we  can generate  factors~$\cos((n+k)\theta)$, from available lower frequency factors~$\cos(k\theta)$ and~$\sin(n\theta)$.
 \begin{lemma}\label{L:poiss(n+k)_cs_gen}
Let us denote, for~$(n,k)\in\bbN_+\times\bbN_+$,
\begin{align}
\psi_k&\coloneqq\xi_{(2,k)}\fkc_{(1,k)}-\xi_{(1,k)}\fkc_{(0,k)}\quad \mbox{and}\quad
\varphi_{(m,n)}\coloneqq\xi_{(m+1,n)}\fks_{(m,n)}-\xi_{(1,n)}\fks_{(0,n)}.\notag
\end{align}
Then, we have that
 \begin{align}\notag
 & r^{-2(n+1)}\fkB(\psi_k) \varphi_{(m,n)}=-Q^{(n,k)}_{+,m}(r^2)\cos((n+k)-Q^{(n,k)}_{-,m}(r^2)\cos((n-k)\theta)
  \end{align}   
with the polynomial functions~$Q^{(n,k)}_{+,m}$ and~$Q^{(n,k)}_{-,m}$ as in Lemma~\ref{L:poiss(n+k)_cc_gen}.
\end{lemma} 
The proof  is given in the Appendix, Section~\ref{Apx:proofL:poiss(n+k)_cs_gen}.

 \begin{lemma}\label{L:3vec-(n+1)}
For~$n\ge3$, the polynomials~$Q^{(n,k)}_{+,m}$ 
as in Lemma~\ref{L:poiss(n+k)_cc_gen} satisfy
 \begin{align}
 &\linspan\{Q^{(n+1-k,k)}_{+,m}\mid (k,m)\in\{1,2\}\times\{1,2\}\}=\linspan\{r^{j}\mid j\in\{0,1,2,3\}\}.\notag
  \end{align}   
   \end{lemma} 
  The proof is given in the Appendix, Section~\ref{Apx:proofL:3vec-(n+1)}.

 \begin{lemma}\label{L:n=k}
 For every~$n\ge1$, the leading coefficient of the polynomials~$Q^{(n,k)}_{-,m}$ 
as in Lemma~\ref{L:poiss(n+k)_cc_gen}  satisfies, for~$k=n$,
 \begin{align}
  &{C}_{-,1,m}^{(n,n)}>0,\quad\mbox{for all}\quad m>1.
  \end{align}   
 \end{lemma} 
  The proof is give  in the Appendix, Section~\ref{Apx:proofL:n=k}.

\subsection{Proof of Theorem~\ref{T:saturating-vort}: conclusion}\label{sS:proofT:saturating-vort}
We start by showing that the functions in~\eqref{cs_mn} 
are elements of some subspace in the sequence in~\eqref{seqclG}, that is, we shall show that 
\begin{equation}\label{fkcs-in-seqclG}
\stepcounter{equation}\tag{\theequation:goal}
 \clS^\infty\coloneqq\{\fkc_{(m,n)}\mid (m,n)\in\bbN\times\bbN\}{\,\textstyle\bigcup\,} \{\fks_{(m,n)}\mid (m,n)\in\bbN\times\bbN_+\}\subset{\textstyle\bigcup_{j\in\bbN}}\clS^j.
\end{equation}

Following~\eqref{seqclS}, we set~$\clS^0=\linspan E$ and observe that
\begin{equation}\label{fkcs-in-seqclG-0}
 E= \{\fkc_{(k,0)}\mid 0\le k\le4\}{\,\textstyle\bigcup\,} \{\fkc_{(m,n)},\fks_{(m,n)}\mid (m,n)\in\{0,1\}\times\{1,2,3\}\}\subset\clS^0.
\end{equation}

Note that~$\{1-r^2,\xi_{(2,1)}\fkc_{(1,1)}-\xi_{(1,1)}\fkc_{(1,1)}\}\in\clS^0$

Using~\eqref{poiss_c0km} and~\eqref{poiss_s0km}, with $(k,m)=(0,1)$ and~$n\in\{1,2,3\}$  we can see that
\begin{align}
 4\fkB(1-r^2) (\xi_{(2,n)}\fkc_{(1,n)}-\xi_{(1,n)}\fkc_{(0,n)})&=\left(8-\xi_{(2,n)}\right) \fks_{(2,n)}+\Phi^0\notag\\
 -4\fkB(1-r^2) (\xi_{(2,n)}\fks_{(1,n)}-\xi_{(1,n)}\fks_{(0,n)})&=\left(8-\xi_{(2,n)}\right) \fkc_{(2,n)}+\Psi^0\notag
  \end{align}
with~$\left(8-\xi_{(2,n)}\right)\ne0$ and~$\{\Phi^0,\Psi^0\}\subset\clS^0$. 
Note also that, due to~\eqref{cs-in-W12}, the selected functions~$\xi_{(2,n)}\fkc_{(1,n)}-\xi_{(1,n)}\fkc_{(0,n)}$ and~$\xi_{(2,n)}\fks_{(1,n)}-\xi_{(1,n)}\fks_{(0,n)}$ are in~$\clC^{1,1}(\overline\clD)$.
Thus, we can conclude that
~$\{\fks_{(2,n)},\fkc_{(2,n)}\}\subset\clS^1$. 

Next, we proceed by induction. If~$\{\fks_{(\underline m,n)},\fkc_{(\underline m,n)}\}\subset \clS^{\underline m-1}$, for all~$2\le\underline m\le\overline m$ and~$n\in\{1,2,3\}$, then using again~\eqref{poiss_c0km} and~\eqref{poiss_s0km}, now with $(k,m)=(0,\overline m)$, we find that 
\begin{align}
  4\fkB(1-r^2) (\xi_{(\overline m+1,n)}\fkc_{(\overline m,n)}-\xi_{(1,n)}\fkc_{(0,n)})&=\left(8-\xi_{(\overline m+1,n)}\right) \fks_{(\overline m+1,n)}+\Phi^{\overline m}\notag\\
    -4\fkB(1-r^2) (\xi_{(\overline m+1,n)}\fks_{(\overline m,n)}-\xi_{(1,n)}\fks_{(0,n)})&=\left(8-\xi_{(\overline m+1,n)}\right) \fkc_{(\overline m+1,n)}+\Phi^{\overline m}\notag
  \end{align}
with~$\left(8-\xi_{(\overline m+1,n)}\right)\ne0$ and~$\{\Phi^{\overline m},\Psi^{\overline m}\}\subset\clS^{\overline m}$. Again, since~$n\ge1$ and due to~\eqref{cs-in-W12}, 
we have that~$\{\fks_{(\overline m+1,n)},\fkc_{(\overline m+1,n)}\}\subset\clS^{\overline m+1}$ and, by induction
we can conclude that
\begin{equation}\label{fkcs-in-seqclG-n123}
 \{\fks_{(m,n)},\fkc_{(m,n)}\mid m\in\bbN,\quad n\in\{1,2,3\}\}\subset{\textstyle\bigcup_{j\in\bbN}}\clS^j.
\end{equation}

Let us now assume that for some~$\overline n\ge 3$ we have that
\begin{equation}\label{fkcs-in-seqclG-nHyp}
\stepcounter{equation}\tag{\theequation:hyp}
 \{\fks_{(m,n)},\fkc_{(m,n)}\mid m\in\bbN,\quad 1\le n\le \overline n\}\subset{\textstyle\bigcup_{j\in\bbN}}\clS^j.
\end{equation}

By using Lemma~\ref{L:poiss(n+k)_cc_gen} with~$(k,m)\in\{1,2\}\times\{1,2\}$, together with Lemma~\ref{L:3vec-(n+1)} and~\eqref{fkcs-in-seqclG-nHyp}, we can conclude that 
\begin{subequations}\label{fkcs-in-seqclG-n+1}
\begin{equation}
 \{\fks_{(m,\overline n+1)}\mid m\in\{0,1,2,3\}\}\subset{\textstyle\bigcup_{j\in\bbN}}\clS^j,
 \end{equation}
where we have used again~\eqref{cs-in-W12} since~$m=2$.
 
 Similarly, combining Lemma~\ref{L:poiss(n+k)_cs_gen} with~$(k,m)\in\{1,2\}\times\{1,2\}$, together with Lemma~\ref{L:3vec-(n+1)} and~\eqref{fkcs-in-seqclG-nHyp}, we obtain
 \begin{equation}
 \{\fkc_{(m,\overline n+1)}\mid m\in\{0,1,2,3\}\}\subset{\textstyle\bigcup_{j\in\bbN}}\clS^j.
 \end{equation}
\end{subequations}
Now, we can use
~\eqref{poiss_c0km} and~\eqref{poiss_s0km} as above, to conclude that
\begin{equation}\label{fkcs-in-seqclG-nThes}
 \{\fks_{(m,\overline n+1)},\fkc_{(m,\overline n+1)}\mid m\in\bbN\}\subset{\textstyle\bigcup_{j\in\bbN}}\clS^j.
\end{equation}
That is, we have that the hypothesis~\eqref{fkcs-in-seqclG-nHyp} implies~\eqref{fkcs-in-seqclG-nThes}. Combining this implication with~\eqref{fkcs-in-seqclG-n123}, by induction, we obtain
\begin{equation}\label{fkcs-in-seqclG-n>1}
 \{\fks_{(m,n)},\fkc_{(m,n)}\mid m\in\bbN, n\in\bbN_+ \}\subset{\textstyle\bigcup_{j\in\bbN}}\clS^j.
\end{equation}

Let us now assume that we have that
 \begin{equation}\label{fkcs-in-seqclG-n=0hyp}
 \stepcounter{equation}\tag{\theequation:hyp}
 \fkc_{(m,0)}\in{\textstyle\bigcup_{j\in\bbN}}\clS^j,\quad\mbox{for all}\quad m\le\overline m, \quad\mbox{for some given}\quad\overline m\ge4. 
  \end{equation} 
Note that, by~\eqref{fkcs-in-seqclG-0} we have that~\eqref{fkcs-in-seqclG-n=0hyp} holds with~$\overline m=4$.
Next, we use Lemma~\ref{L:poiss(n+k)_cs_gen} with~$n=k=1$. 
Then, using~\eqref{fkcs-in-seqclG-n>1}, we find that
 \begin{align}\notag
 & \fkB(\psi_1) \varphi_{(\overline m-2,1)}=-Q^{(1,1)}_{-,\overline m-2}(r^2)r^{4}\in{\textstyle\bigcup_{j\in\bbN}}\clS^j.
  \end{align}  
Since~$Q^{(1,1)}_{-,\overline m-2}$ is a polynomial of degree~$\overline m-1$, with leading coefficient~${C}_{-,1,\overline m-2}^{(1,1)}$, by
\eqref{fkcs-in-seqclG-0} we obtain that
 $ -{C}_{-,1,\overline m-2}^{(1,1)}r^{2(\overline m-1)+4}\in{\textstyle\bigcup_{j\in\bbN}}\clS^j$ and, by Lemma~\ref{L:n=k}, it follows that
  \begin{align}\label{fkcs-in-seqclG-n=0thes}
 & \fkc_{(\overline m+1,0)}=r^{2(\overline m+1)}\in{\textstyle\bigcup_{j\in\bbN}}\clS^j,
  \end{align} 
  where we have used again~\eqref{cs-in-W12} since~$n=1$.
 That is, the hypothesis~\eqref{fkcs-in-seqclG-nHyp} implies~\eqref{fkcs-in-seqclG-nThes}. By induction, using~\eqref{fkcs-in-seqclG-0}, it follows that
  \begin{equation}
   \fkc_{(m,0)}\in{\textstyle\bigcup_{j\in\bbN}}\clS^j,\quad\mbox{for all}\quad m\in\bbN,
  \end{equation} 
which conbined with~\eqref{fkcs-in-seqclG-n>1}, give us that the inclusion in~\eqref{fkcs-in-seqclG} holds true.

To finish the proof, it remains to show that
${\textstyle\bigcup_{j\in\bbN}}\clS^j$ is dense in~$W^{-1,2}$. Note that, since~$H$ is dense in~$W^{-1,2}$, it is sufficient to show that
the linear space~$\clS^\infty$ in~\eqref{fkcs-in-seqclG} is dense in~$H$. For this purpose, let us fix an arbitrary function~$h\in (\clS^\infty)^{\perp,H}$.
Recall that an orthogonal basis in~$H$ of eigenfunctions of the Laplacian can be found in the form
\begin{subequations}\label{Eigf-Bessel}
\begin{align}
&\overline e_{j,n}^c(r,\theta)=J_{n}(\rho_{j,n}r)\cos(n\theta),\quad (j,n)\in\bbN_+\times\bbN\\
&\overline e_{j,n}^s(r,\theta)=J_{n}(\rho_{j,n}r)\sin(n\theta),\quad(j,n)\in\bbN_+\times\bbN_+
\end{align}
\end{subequations}
where the $J_{n}\colon[0,+\infty)\to\bbR$ are Bessel functions solving
  \begin{equation}\label{BesselEq}
   s^2(\tfrac{\rmd}{\rmd s})^2 z +s\tfrac{\rmd}{\rmd s} z +(s^2-n^2)z=0,\qquad s\ge0.
  \end{equation} 
and the~$\rho_{j,n}>0$ are the corresponding positive zeros,~$J_{n}(\rho_{j,n})=0$ (cf.\cite[Ch.~II, Sects.~2.1.3, Eq.~(1)]{Watson66}). For the completeness, in~$L^2((0,1))$, of each of the families~$\{J_{n}(\rho_{j,n}r)\mid j\in\bbN_+\}$ we refer to~\cite[Ch.~XVIII, Sect.~18.2.4-5]{Watson66}. Finally, note that from~\eqref{A-pol}, we find
\begin{align}
\Delta\overline e_{j,n}^c(r,\theta)&=\left(\rho_{j,n}^2(\tfrac{\rmd}{\rmd s})^2 J_{n}(\rho_{j,n}r)+r^{-1}\rho_{j,n}\tfrac{\rmd}{\rmd s} J_{n}(\rho_{j,n}r) -r^{-2}n^2 J_{n}(\rho_{j,n}r)\right)\cos(n\theta),
\end{align}
and by~\eqref{BesselEq}, with~$\xi=r\rho_{j,n}$,
\begin{align}
r^2\Delta\overline e_{j,n}^c(r,\theta)&=\left(\xi^2(\tfrac{\rmd}{\rmd s})^2 J_{n}(\xi)+\xi\tfrac{\rmd}{\rmd s} J_{n}(\xi) -n^2 J_{n}(\xi)\right)\cos(n\theta)\notag\\
&=(-(\xi^2-n^2) -n^2) J_{n}(\xi)\cos(n\theta)=-\xi^2\overline e_{j,n}^c(r,\theta)=-r^2\rho_{j,n}^2\overline e_{j,n}^c(r,\theta).\notag
\end{align}
Similarly, we can find that~$r^2\Delta\overline e_{j,n}^s(r,\theta)=-r^2\rho_{j,n}^2\overline e_{j,n}^c(r,\theta)$, giving us that the corresponding eigenvalue of~$A=-\Delta$ is~$\rho_{j,n}^2$,
\begin{align}
&A\overline e_{j,n}^c(r,\theta)=\rho_{j,n}^2\overline e_{j,n}^c(r,\theta),\quad (j,n)\in\bbN_+\times\bbN\notag\\&A\overline e_{j,n}^s(r,\theta)=\rho_{j,n}^2\overline e_{j,n}^s(r,\theta),\quad(j,n)\in\bbN_+\times\bbN_+.\notag
\end{align}

Since the eigenfunctions in~\eqref{Eigf-Bessel} form a complete system, we  can write
\begin{equation}\notag
h=h(r,\theta)={\textstyle\sum\limits_{n\in\bbN}}h_{\rmc,n}(r)\cos(n\theta)+{\textstyle\sum\limits_{n\in\bbN_+}}h_{\rms,n}(r)\sin(n\theta).
\end{equation}
Then, from~$h\in (\clG^\infty)^{\perp,H}$ we find that
\begin{align}
0&=(h,\fkc_{(m,n)})_H=\int_0^1r\,\rmd r\int_0^{2\pi} h(r,\theta)\fkc_{(m,n)}(r,\theta)\,\rmd\theta\notag\\
&=\int_0^1r\,\rmd r\int_0^{2\pi}h_{\rmc,n}(r)r^{2(n+m)}\cos^2(n\theta)\,\rmd\theta\notag
\intertext{and, analogously,}
0&=(h,\fks_{(m,n)})_H=\int_0^1r\,\rmd r\int_0^{2\pi}h_{\rms,n}(r)r^{2(n+m)}\sin^2(n\theta)\,\rmd\theta\notag
\end{align}
which implies that
\begin{subequations}\label{orth-evenmonom}
\begin{align}
0&=\int_0^1 r^{2m} h_{\rmc,n}(r)r^{2(n+1)}\,\rmd r,\quad\mbox{for all}\quad n\in\bbN\mbox{ and all } m\in\bbN;\\
0&=\int_0^1 r^{2m} h_{\rms,n}(r)r^{2(n+1)}\,\rmd r,\quad\mbox{for all}\quad n\in\bbN_+\mbox{ and all }m\in\bbN.
\end{align}
\end{subequations}
Recalling that the monomials in~$\{r^m\mid m\in\bbN\}$ form a basis in~$L^2(-1,1)$ we find that
the monomials in~$\{r^{2m}\mid m\in\bbN\}$ form a basis in~$L^2(0,1)$ (e.g., note that a function~$h\in L^2(0,1)$ can be extended to an even function in~$(-1,1)$ by setting~$h(-r)\coloneqq h(r)$, and this extension is clearly orthogonal to the odd degree monomials).

Since~$\{r^{2m}\mid m\in\bbN\}$ is a basis, by~\eqref{orth-evenmonom}, the functions $h_{\rmc,n}(r)r^{2(n+1)}$ and~$h_{\rms,n}(r)r^{2(n+1)}$ must vanish, which leads to~$h=0$ and~$(\clS^\infty)^{\perp,H}=\{0\}$, that is,~$\clS^\infty$ is dense in~$H$.
\qed

\section{Final remarks}\label{S:final-rmks}

\subsection{On the smoothness of the spatial domain}\label{sS:rmk-smooth}
This manuscript shows the approximate controllability of~2D Euler equations in the case where the unit disk is the spatial domain.
Previous analogue results in the literature are restricted to boundaryless spatial domains, as a Torus~$\bbT^2$~\cite{AgraSary05,AgraSary06} or  a Sphere~$\bbS^2$~\cite{AgraSary08}.

We follow the Agrachev--Sarychev approach, and show that approximate controllability follows from the existence of saturating set~$G_0$ included in the set of actuators~$U$; see~\eqref{U}.
We relax the notions of saturating set in the literature, which allows us to find an example of a saturating set in the case the spatial domain is the unit disk.

By considering $\clC^\infty$-smooth domains~$\Omega$, we can directly apply available results in the literature, stated for such domains. The lowest possible regularity of the boundary~$\p\Omega$ of~$\Omega$, under which Theorem~\ref{T:KatoShiri} holds true is not the focus of this work. In any case, we would like to mention that Theorem~\ref{T:KatoShiri}  will likely hold for  ``regular enough'' domains, namely, with $\clC^{m}$-smooth boundary with~$m\in\bbN$ ``large enough'', for example, as stated in~\cite[Intro., discussion preceding Eq.~(E)]{Kato67}. In that case, our approximate controllability result in Theorem~\ref{T:approxcontrol} will also hold for such~$\clC^{m}$-domains.

It would be interesting to know whether Theorems~\ref{T:KatoShiri} and~\ref{T:approxcontrol} also hold true for the particular case of a rectangular domain~$\Omega\subset\bbR^2$. Because in this case we have a saturating set as given in~\cite[Sect.~6.3]{Rod-Thesis08}.

We focus on ${\rm 2D}$ bounded domains~$\Omega\subset\bbR^2$.  It is likely that the result can be extended to ${\rm 2D}$ (compact) Riemannian manifolds with smooth boundary. In this case we will have a saturating set as in~\cite[Thm.~6.4.10]{Rod-Thesis08} for the case of the hemisphere~$\bbS_+^2\subset\bbR^3$.

\subsection{On approximate controllability in other norms.}\label{sS:rmk-appcontr-norm}
We have shown approximate controllability with respect to the norm of the pivot space~$H$ in~\eqref{H}; see Definition~\ref{D:acon-U}. A natural question is whether an analogous result holds for stronger norms, namely, when we replace the condition~$\norm{y(T)-y_\tta}{H}\le \varepsilon$ by~$\norm{y(T)-y_\tta}{\clX}\le \varepsilon$ in Definition~\ref{D:acon-U}. For example, from Theorem~\ref{T:KatoShiri}, candidates for~$\clX$ could be spaces as~$\clX=H\bigcap\clC^{m}(\overline\Omega)$, $0\le m\le 2$. The proof of such a result will likely require considerably different arguments. Other candidates for~$\clX$ could be Sobolev subspaces (cf.~\cite{Nersisyan10} for a 3D boundaryless torus~$\Omega=\bbT^3$ as spatial domain; see also~\cite[Sect.~15.2]{EbinMars70} for well-posedness results concerning initial data in Sobolev subspaces for general smooth bounded domains/manifolds and~\cite{LinSunLiuZhang24} for an unbounded cylindrical domain).

\subsection{On approximate controllability by spatially localized actuators}
The set~$G_0$ in~\eqref{satset-EG0} consists of actuators~$\Phi_i$ whose supports~$\supp(\Phi_i)=\overline\Omega$ cover the entire domain. We would like to recall a question introduced by Agrachev in~\cite[Sect.~VII]{Agrachev13-arx}: are the Euler equations approximately controllable by a finite number of actuators all supported in an apriori given  small subdomain,~$\supp(\Phi_i)\subset\overline\omega\subset\Omega$? A different question without fixing~$\omega$ a priori could be:  if we fix a priori the total volume covered by the supports of the actuators as~$\rho\vol(\Omega)$ with~$0<\rho<1$, can we still find actuators so that  the Euler equations are approximately controllable?

\subsection{On Euler  equations versus Navier--Stokes equations}\label{sS:rmk-NS}
We derived Theorem~\ref{T:approxcontrol} stating approximate controllability for Euler  equations~\eqref{eul-sys}, by means of smooth control inputs~$u\in\clC^\infty([0,T];\bbR^M)$ based on a set~$U$ of~$M$ actuators as in~\eqref{U}, constructed from a saturating set as in Definition~\ref{D:saturVlin}. A question arises on whether an analogous approximate controllability result holds for Navier--Stokes equations~\eqref{ns-sys} with analogue control inputs and saturating sets. Such a result seems plausible, but there are details to be checked and new difficulties to be overcome. The investigation of these details is postponed for a future work. The reason is that, though the main steps can likely  be followed, the mathematical setting and analysis will be different. Solutions evolving in H\"older spaces, as in Theorem~\ref{T:KatoShiri}, are convenient to handle the Euler equations. Instead, for Navier--Stokes equations it is more natural to work with variational weak/strong solutions evolving in Sobolev spaces, as in~\cite{Temam01}; see also the remark in~\cite[Sect.~15.6(i)]{EbinMars70} suggesting that looking for Navier--Stokes solutions evolving  in H\"older spaces, will require considerable work.

Due to the parabolic-like nature of Navier--Stokes equations we will likely be able to take less regular initial conditions and external forces. On the other hand, we will need to handle the additional boundary conditions and the additional term given by the Stokes operator~$ A\coloneqq -\Pi\Delta$, depending on the given boundary conditions. Just to give an illustrative example, following the arguments in Section~\ref{sS:imit-comp2} we will arrive to the analogue of~\eqref{dyn.Zimity2}, 
\begin{align}
\dot Z&=-\nu A(Z-\varkappa_2)-B(Z,Z)+\clB(\varkappa_2-\underline y)Z+\clB(\underline y)\varkappa_2+h_{\beta,K}\notag\\
&=-\nu AZ-B(Z,Z)+\clB(\varkappa_2-\underline y)Z+\clB(\underline y)\varkappa_2+h_{\beta,K}+\nu A\varkappa_2,\label{dyn.Zimity2-NS}
\end{align}
with~$\overline z(t)=Z(t)-\varkappa_2(t)\in\rmD(A)$, and since~$\varkappa_2(t)\in H\subset L^2(\Omega)^2$ we meet a technical issue, namely, that we cannot guarantee more regularity for~$Z$ than~$Z(t)\in H$. In particular, the classical weak/strong energy estimates are not valid for~$Z$ in~\eqref{dyn.Zimity2-NS}. Note also that the extra forcing~$\nu A\varkappa_2\in\rmD(A^{-1})$ does not have the regularity~$\nu A\varkappa_2\in\rmD(A^{-\frac12})$ required by weak solutions, when~$\varkappa_2(t)\in H\setminus\rmD(A^{\frac12})$. This means that we will need some extra nontrivial argument at this point. We can avoid this regularity issue if we can find a saturating set with more regular actuators and satisfying the boundary conditions; see~\cite[Thm.~2.2 and~2.4]{Shirikyan06} \cite[Thm.~2.2 and Props.~3.1--3.3]{Shirikyan07} where the actuators are assumed to be in the domain of~$A$, $\rmD(A) \subset\{h\in W^{2,2}\mid\; h\rest{\partial\Omega}=0\}\subset H$ (under no-slip boundary conditions); see~\cite[Sect.~1.1]{Shirikyan06}~\cite[Sect.~1.2]{Shirikyan07}. Here we must say that the results in~\cite{Shirikyan06,Shirikyan07} are focused on the more demanding~3D case, where the long-time existence and uniqueness of the controlled solutions has to be guaranteed if the time-horizon~$T$ is large. Anyway, the results in~\cite{Shirikyan06,Shirikyan07} can be applied to the 2D Navier--Stokes equations as well, where well-posedness of weak variational solutions is known.

Can we find a saturating set as required in~\cite{Shirikyan06} with all its elements in~$\rmD(A)$?
For no-slip boundary conditions, this could be an interesting work for a future work, regardless of the bounded spatial domain considered, either 2D or 3D.

Note that the subset~$G^0$ in~\eqref{satset-vecfields} is not contained in~$\rmD(A)$, in the unit disk under no-slip boundary conditions, because~$h_1=\curl^*(-\Delta_\rmd)^{-1}1\in G^0\setminus\rmD(A)$. Indeed,~$h_1=\frac12(-x_2,x_1)$ does not satisfy the no-slip boundary conditions. 
The subset~$G^0$ in~\eqref{satset-vecfields} is also not contained in~$\rmD(A)$, in the unit disk under Lions boundary conditions, because~$\curl h_1=\curl \curl^*(-\Delta_\rmd)^{-1}1=1$ does not vanish on~$\partial\clD$.

Are the Navier--Stokes equations in the unit disk~$\clD$ approximately controllable by means of the actuators in~$U\supset G^0$ as in~\eqref{satset-vecfields}? The discussion above tells us that the answer is not trivial. This could also be an interesting subject for future work.

In the case of a bounded smooth domain~$\Omega\subset\bbR^d$, finding examples of saturating sets leading to approximate controllability of Navier--Stokes equations is an interesting problem, regardless of the type of boundary conditions (either of no-slip or of slip type) and regardless of the dimension~$d\in\{2,3\}$ of the spatial domain. In this manuscript, for Euler equations (where the boundary conditions are less restrictive)  we provided an example in the case~$\Omega=\clD$ is the unit disk. It would also be interesting to know examples for other spatial domains as well, possibly leading to further insights on the approximate controllability of the equations of fluid mechanics. With this respect, we may ask the following question inspired by the discussion in~\cite[Sect.~9]{AgraSary08}: using the result for the unit disk, can an appropriate transformation/argument be found to show approximate controllability for ``generic'' simply connected smooth domains?

\appendix\normalsize
\section*{Appendix}
\setcounter{section}{1}

\subsection{Proof of Lemma~\ref{L:estTrilin-poisson}}\label{Apx:proofL:estTrilin-poisson}
Recalling~\eqref{Trilin-poisson} and~\eqref{B-formal}, we find
   \begin{align}
   \bfb(y,z,w)&=(B(y,z),w)_H=((y\cdot\nabla z_1, y\cdot\nabla z_2),w)_{L^2}=\sum_{i=1}^2(y,w_i\nabla z_i)_{L^2} \notag\\
   &=\sum_{i=1}^2\int_{\Omega}\left(\diver(w_i z_iy)-z_iy\cdot\nabla w_i-z_i w_i(\diver y)\right)\rmd\Omega=-\sum_{i=1}^2(y,z_i\nabla w_i)_{L^2}\notag\\
   &=-\bfb(y,w,z),\notag   
     \end{align} 
where we used~$(\diver v,1)_{L^2}=(\diver v,1)_{L^2(\Omega)}=(\bfn\cdot h,1)_{L^2(\p\Omega)}$ and~$y\in H$; see~\eqref{H}. In particular, $\bfb(y,z,z)=0$. Thus, \eqref{asymTrilin} holds true.

Next, we can estimate~$\norm{\bfb(y,z,w)}{\bbR}=\norm{\sum_{i=1}^2(y,w_i\nabla z_i)_{L^2}}{\bbR}$ as
\begin{align}
\norm{\bfb(y,z,w)}{\bbR}&\le C_1\norm{y}{L^2}\norm{z}{\clC^{1}(\overline\Omega)}\norm{w}{L^2},\notag\\
\norm{\bfb(y,z,w)}{\bbR}&\le C_2\norm{y}{L^4}\norm{\nabla z}{L^2}\norm{w}{L^4}\le C_3\norm{y}{L^4}\norm{z}{V}\norm{w}{L^4},\notag
\end{align}
where we used Lemma~\ref{L:vect-vort-est}, with the norm in~$V= W^{1,2} \bigcap H$ associated to the scalar product in~\eqref{scalarprodV}. Finally, we  find
\begin{align}
&\norm{\bfb(y,z,w)+\bfb(z,y,w)}{\bbR}=\norm{\langle B(y,z)+B(z,y),w\rangle_{V,V'}}{\bbR}\le C_4\norm{B(y,z)+B(z,y)}{V}\norm{w}{V'}\notag\\
&\hspace{1em}\le C_5\left(\norm{B(y,z)+B(z,y)}{H}+\norm{(\curl (B(y,z)+B(z,y))}{L^2}\right)\norm{w}{V'}\notag\\
&\hspace{1em}= C_5\left(\norm{B(y,z)+B(z,y)}{H}+\norm{y\cdot\nabla(\curl z)+z\cdot\nabla(\curl y)}{L^2}\right)\norm{w}{V'}\notag\\
&\hspace{1em}\le  C_6\left(\norm{y}{L^2}\norm{z}{\clC^1}+\norm{z}{L^2}\norm{y}{\clC^1}+\norm{y}{\clC^0}\norm{(\curl z}{\clC^1}+\norm{z}{\clC^0}\norm{(\curl y}{\clC^1}\right)\norm{w}{V'}\notag\\
&\hspace{1em}\le  C_7\norm{(y,\nabla y)}{\clC^1}\norm{(z,\nabla z)}{\clC^1}\norm{w}{V'}.\notag
\end{align}
We can finish the proof by taking~$C_\bfb=\max\{C_1,C_3,C_7\}$.
\qed

\subsection{Proof of Lemma~\ref{L:findimGj}}\label{Apx:proofL:findimGj}
First of all~$\clG^0$ is finite dimensional, since~$G_0$ if finite; then we can fix a basis~$\{\Psi_k^0\mid 1\le k\le \overline m_{0}\}$ for~$\clG^0$. We proceed by Induction. Let~$j\ge0$ and assume that~$\{\Psi_k^j\mid 1\le k\le \overline m_{j}\}$ is a basis for~$\clG^j$. Given an arbitrary~$h\in\clG^{j+1}$ we can write
\begin{equation}\notag
h=p+q\quad\mbox{with}\quad p\in\clG^j\mbox{ and }q\in\clB_\clH(\clG^0)\clG^j.
\end{equation}
Observe that, recalling~\eqref{BZH-clH} and~\eqref{spanS},  we can write for an arbitrary~$\overline q\in\clB_\clH(\clG^0)\clG^j$,
\begin{equation}\notag
\overline q={\textstyle\sum\limits_{i=1}^N} c_i\clB(\zeta_i)(\eta_i)\quad\mbox{with}\quad (c_i,\zeta_i,\eta_i)\in\bbR\times\clG^0\times\clG^j\mbox{ and }\clB(\zeta_i)(\eta_i)\in\clH.
\end{equation}
Next, we see that for some scalars~$a_r$ and~$b_s$, with~$1\le r\le \overline m_{0}$ and~$1\le s\le \overline m_{j}$, we have
\begin{equation}\notag
\clB(\zeta_i)(\eta_i)={\textstyle\sum\limits_{k=1}^{\overline m_{0}}\sum\limits_{s=1}^{\overline m_{j}}}a_rb_s\clB(\Psi_r^0)\Psi_s^j\in \clZ_j\coloneqq\linspan\{\clB(\Psi_r^0)\Psi_s^j\mid 1\le r\le \overline m_{0},\;1\le s\le \overline m_{j}\}.
\end{equation}
Therefore, $\overline q\in\clZ_j$ and we can conclude that~$\clB_\clH(\clG^0)\clG^j\subseteq\clZ_j$, which implies that~$\clB_\clH(\clG^0)\clG^j$ is finite dimensional. Let~$\widehat n_j=\dim\clB_\clH(\clG^0)\clG^j$. Necessarily, we can find a basis for~$\clB_\clH(\clG^0)\clG^j$ in the form~$\{\clB(\psi_i)(\phi_i)\mid 1\le i\le \widehat n_j\}$. Then, after eliminating the elements of this basis which are in~$\clG^j$, and assuming that those are the last ones, we conclude that the remaining elements~$\{\clB(\psi_i)(\phi_i)\mid 1\le i\le  n_j\}$ (an empty set if~$n_j=0$) satisfy
\begin{equation}\notag
\clG^{j}+\clB_\clH(\clG^0)\clG^j=\linspan\left(
\{\Psi_k^j\mid 1\le k\le \overline m_{j}\}{\,\textstyle\bigcup\,}\{\clB(\psi_i)(\phi_i)\mid 1\le i\le n_j\}
\right).
\end{equation}
We finish the proof by setting~$\Phi_k\coloneqq\Psi_k^j$ for $1\le k\le \overline m_{j}$, and~$\Phi_{\overline m_{j}+i}=\clB(\psi_i)(\phi_i)$ for $1\le i\le n_{j}$. 
\qed

\subsection{Proof of Lemma~\ref{L:intsincosKTheta}}\label{Apx:proofL:intsincosKTheta}  
We  write
\begin{align}
&{\textstyle\int\limits_0^T}\sin(\tfrac{\pi K s}{T})\Theta(s)\,\rmd s={\textstyle\int\limits_0^{t_1}}\sin(\tfrac{\pi K s}{T})\Theta(s)\,\rmd s
+{\textstyle\int\limits_{t_1}^{t_2}}\sin(\tfrac{\pi K s}{T})\Theta(s)\,\rmd s+{\textstyle\int\limits_{t_2}^T}\sin(\tfrac{\pi K s}{T})\Theta(s)\,\rmd s\notag
\end{align}
with~$0<t_1\coloneqq\tfrac{T}{2K}\le\tfrac{(2K-1)T}{2K}\eqqcolon t_2<T$.
Recalling that~$W^{1,1}(0,T)\xhookrightarrow{}L^\infty(0,T)$ (cf.~\cite[Sect.8.2, Thm.~8.8]{Brezis11}),  with~$C\coloneqq\norm{\Id}{\clL(W^{1,1}(0,T),L^\infty(0,T))}$, we obtain
\begin{align}
&\pm{\textstyle\int\limits_0^T}\sin(\tfrac{\pi K s}{T})\Theta(s)\,\rmd s\le\tfrac{T}{2K}\norm{\Theta}{L^\infty(0,T)}
\pm{\textstyle\int\limits_{t_1}^{t_2}}\sin(\tfrac{\pi K s}{T})\Theta(s)\,\rmd s+\tfrac{T}{2K}\norm{\Theta}{L^\infty(0,T)}\notag\\
&\hspace{3em}\le\tfrac{T}{K}C\norm{\Theta}{W^{1,1}(0,T)}
\pm{\textstyle\int\limits_{t_1}^{t_2}}\tfrac{T}{\pi K}\cos(\tfrac{\pi K s}{T})\dot\Theta(s)\,\rmd s
\le\tfrac{T}{K}C\norm{\Theta}{W^{1,1}(0,T)}
+\tfrac{T}{\pi K}\norm{\dot\Theta}{L^1(0,T)}\notag\\
&\hspace{3em}\le \tfrac{(\pi C+1)T}{\pi}\norm{\Theta}{W^{1,1}(0,T)}K^{-1}.\notag
\end{align}
which gives us~\eqref{intsinKTheta}. Finally, we find  
\begin{align}
\pm{\textstyle\int\limits_0^T}\cos(\tfrac{\pi K s}{T})\Theta(s)\,\rmd s=\mp{\textstyle\int\limits_0^T}\tfrac{T}{\pi K}\sin(\tfrac{\pi K s}{T})\dot\Theta(s)\,\rmd s\le\tfrac{T}{\pi K}\norm{\dot\Theta}{L^1(0,T),}\le\tfrac{T}{\pi}\norm{\Theta}{W^{1,1}(0,T)}K^{-1},\notag
\end{align}
which gives us~\eqref{intcosKTheta}.
\qed

\subsection{Proof of Lemma~\ref{L:obliproj}}\label{Apx:proofL:obliproj}
Given~$M\in\bbN_+$, we find that~$P_{\clE_M^\top}^{\clU_M}P_{\clE_M}^{\clE_M^\top}=(\Id-P_{\clU_M}^{\clE_M^\top}) P_{\clE_M}^{\clE_M^\top}=(P_{\clE_M}^{\clE_M^\top}-P_{\clU_M}^{\clE_M^\top}) P_{\clE_M}^{\clE_M^\top}$, hence denoting by~$\bbB_1^{\clE_M}$ the unit ball in~$\clE_M$,
\begin{align}
\norm{P_{\clE_M^\top}^{\clU_M}P_{\clE_M}^{\clE_M^\top}}{\clL(H)}=
\sup_{\substack{q\in\bbB_1^{\clE_M}\setminus\{0\}}}
\norm{q-P_{\clU_M}^{\clE_M^\top}q}{H}. \notag
\end{align}
We know that~$\bbE_M=\{ e_i\mid 1\le i\le M\}$ is an orthonormal basis for~$\clE_M$.  Let~$U_M=\{ \Phi_i\mid 1\le i\le M\}$ be a basis for~$\clU_M$ and denote by~$\bbB_1^M$ the unit ball in~$\bbR^M$, and finally denote
\begin{align}\notag
\overline q\mapsto \bbE_M^\diamond \overline q=\sum_{i=1}^M\overline q_ie_i\quad\mbox{and}\quad \overline q\mapsto U_M^\diamond \overline q=\sum_{i=1}^M\overline q_i\Phi_i
\end{align}
mapping~$\overline q\in\bbR^M$, respectively, onto~$\clE_M$ and~$\clU_M$. Then, we find
\begin{align}
\norm{P_{\clE_M^\top}^{\clU_M}P_{\clE_M}^{\clE_M^\top}}{\clL(H)}&=\sup_{\overline q\in\bbB_1^M\setminus\{0\}}\norm{\bbE_M^\diamond\overline q-U_M^\diamond[(\bbE_M, U_M)_H]^{-1}\overline q}{H} \notag\\
&\le\sup_{\overline q\in\bbB_1^M\setminus\{0\}}\norm{U_M^\diamond(\Id-[(\bbE_M, U_M)_H]^{-1})\overline q}{H}+\sup_{\overline q\in\bbB_1^M\setminus\{0\}}\norm{(\bbE_M^\diamond-U_M^\diamond)\overline q}{H}, \notag
\end{align}
where we used $P_{\clU_M}^{\clE_M^\top}q= U_M^\diamond\Xi_M^{-1}(\bbE_M^\diamond)^{-1} q$, for~$q\in\clE_M$, where~$\Xi_M\coloneqq[(\bbE_M, U_M)_H]$ stands for the matrix with entry~$( e_i,\Phi_j)_H$ in the~$i$th row and~$j$th  column; see~\cite[Lem.~2.8]{KunRod19-cocv}. By a continuity argument~$\Xi_M$ converges to the identity matrix in~$\bbR^{M\times M}$ as~$\Phi_i\to e_i$ for all~$1\le i\le M$. Therefore, given~$\varrho>0$, by choosing~$ \Phi_i$ close enough to~$e_i$ for each~$1\le i\le M$ we will have that~$\norm{P_{\clE_M^\top}^{\clU_M}P_{\clE_M}^{\clE_M^\top}}{\clL(H)}<\varrho$. Finally, note that we can choose such~$\Phi_i$ so that~$\{\Phi_i\mid 1\le i\le M\}\subset\clG^{\overline\jmath}$, for some large enough~$\overline\jmath\in\bbN$, due to the density of the inclusion~$\bigcup_{j\in\bbN}\clG^{j}\subset H$.
\qed

\subsection{Proof of Lemma~\ref{L:saturVlin-vort}}\label{Apx:proofL:saturVlin-vort}
Given an arbitrary~$h\in V$ we have that~$\curl h\in L^2$ and,
using~\eqref{curl*form}, that
\begin{equation}\notag
\langle\curl h,g\rangle_{W^{-1,2},W^{1,2}_0}=(\curl h, g)_{L^2}\coloneqq(h, \curl^* g)_{H}
\end{equation}
holds true  for every~$(h,g)\in V\times W^{1,2}_0$. Thus, using~$V\xhookrightarrow{\rm d} H$, we can write
\begin{equation}
\langle\curl h,g\rangle_{W^{-1,2},W^{1,2}_0}\coloneqq(h, \curl^* g)_{H},\quad\mbox{for all }(h,g)\in H\times W^{1,2}_0.\label{curlL2}
\end{equation}
Hence, we can write that~$\curl h\in W^{-1,2}$ if~$h\in H$. Furthermore, we see that
\begin{subequations}\label{nH-ncV'}
\begin{equation}
\norm{\curl h}{W^{-1,2}}\le C_1\norm{h}{H}
\end{equation}
for some constant~$C_1>0$, and that for arbitrary~$h\in H$ and~$z\in H$, taking~$g=(-\Delta_\rmd)^{-1}\curl z$ we find~$\curl^*g=z$ and
\begin{align}
(h, z)_{H}&=(h, z)_{L^2\times L^2}=\langle \curl h, (-\Delta_\rmd)^{-1}\curl z\rangle_{W^{-1,2},W^{1,2}_0}\le C_2\norm{\curl h}{W^{-1,2}}\norm{\curl z}{W^{-1,2}}\notag\\
&\le C_2C_1\norm{\curl h}{W^{-1,2}}\norm{z}{H},\notag
\end{align}
hence we also have that, with~$C_3\coloneqq C_1C_2$,
\begin{equation}
\norm{h}{H}\le C_3\norm{\curl h}{W^{-1,2}}.
\end{equation}
\end{subequations}

From the inequalities in~\eqref{nH-ncV'} we can see that to finish the proof it is enough to show that the sequences in~\eqref{seqclG} and~\eqref{seqclS} are related by the identities
\begin{equation}\label{seq-fkGclG}
\clS^j=\curl\clG^j,\qquad j\in\bbN,
\end{equation}
which we can show by induction. Indeed, for~$j=0$, with~$G^0=\curl^* (-\Delta_\rmd)^{-1}S_0$ as in Definitions~\ref{D:saturVlin} and~\ref{D:saturVlin-vort}, we find that~$\clG^0=\linspan S_0=\linspan (\curl G^0)=\curl\linspan G^0=\curl\clG^0$. Hence~\eqref{seq-fkGclG} holds true for~$j=0$. 

Next, let us be given an arbitrary nonnegative integer~$n\in\bbN$ and assume that~\eqref{seq-fkGclG} holds true for all integers~$0\le j\le n$. We find
\begin{align} \label{curlfkG1}
\curl\clG^{n+1}&=\curl\clG^{n}+\curl\clB_\clH(\clG^0)(\clG^{n})
\end{align}
and, for the last term we can see that, with~$\clX\coloneqq\curl\clH$,
\begin{align} \label{curlfkG2}
\curl\clB_\clH(\clG^0)(\clG^{n})=\fkB_\clX(\clS^0)(\clS^{n}).
\end{align}
Indeed, with~$(h,g)=(\curl y,\curl z)$ we find
\begin{align}
\curl\clB(y)z&=\curl(\clB(y+z,y+z)-\clB(y,y)-\clB(z,z))\notag\\
&=\{A^{-1}(h+g),h+g\}_\fkp-\{ (-\Delta_\rmd)^{-1}h,h\}_\fkp-\{ (-\Delta_\rmd)^{-1}g,g\}_\fkp\notag\\
&=\{ (-\Delta_\rmd)^{-1}h,g\}_\fkp+\{ (-\Delta_\rmd)^{-1}g,h\}_\fkp=\fkB(h)g.\notag
\end{align}

By the induction hypothesis we obtain~\eqref{curlfkG2}.

By combining~\eqref{curlfkG1} with~\eqref{curlfkG2} and with the induction assumption, we obtain
\begin{align}\notag
\clS^{n+1}=\clS^{n}+\fkB_\clH(\clS^0)(\clS^{n})=\curl\clG^{n+1},
\end{align}
By induction we conclude that~\eqref{seq-fkGclG} holds for all~$j\in\bbN$, finishing the proof.
\qed

\subsection{Proof of Lemma~\ref{L:poiss_cs0km}}\label{Apx:proofL:poiss_cs0km}  
Let~$m\in\bbN$. Since~$\fkc_{(m,0)}=r^{2m}$ is independent of~$\theta$, by Corollary~\ref{C:invlap_cs} we have that~$A^{-1}\fkc_{(m,0)}$ is independent of~$\theta$ as well. Then, by~\eqref{poisson_b-pol}
it follows that~$\clB(1-r^2)\fkc_{(m,0)}=0$. Next, for all~$(n,m,k)\in\bbN_+\times\bbN\times\bbN$, we find that
\begin{equation}\label{iLap1-r2}
\mbox{for}\quad\varphi\coloneqq 1-r^2\quad\mbox{we have}\quad A^{-1}\varphi=\zeta\coloneqq\tfrac1{16}(r^4-1)-\tfrac14(r^2-1)
\end{equation}
and, by denoting
\begin{align}
\psi_\fkc\coloneqq\xi_{(m+1,n)}\fkc_{(m,n)}-\xi_{(k+1,n)}\fkc_{(k,n)},\notag\\
\psi_\fks\coloneqq\xi_{(m+1,n)}\fks_{(m,n)}-\xi_{(k+1,n)}\fks_{(k,n)},\notag
\end{align}
we also find, again by Corollary~\ref{C:invlap_cs},
\begin{align}
A^{-1}\psi_\fkc&= \eta\cos(n\theta)\quad\mbox{and}\quad A^{-1}\psi_\fks= \eta\sin(n\theta),\qquad\mbox{with}\quad\eta\coloneqq r^{2n}(r^{2(k+1)}-r^{2(m+1)}).\notag
\end{align}
The above identities, together with~\eqref{poisson_b-pol}, lead us to
 \begin{align}
 \clB(\varphi) \psi_\fkc&=\{A^{-1}\varphi,\psi_\fkc\}_\fkp-\{\varphi,A^{-1}\psi_\fkc\}_\fkp=\{\zeta,\psi_\fkc\}_\fkp-
 \{\varphi,\eta\cos(n\theta)\}_\fkp\notag
 \\
&= \tfrac{1}{r}(-n\tfrac{\p\zeta}{\p  r}\psi_\fks+n\eta\tfrac{\p \varphi}{\p  r}\sin(n\theta))=n((\tfrac12-\tfrac14r^2)\psi_\fks-2\eta\sin(n\theta))\notag\\
&=\tfrac{n}4\left((2-r^2)(\xi_{(m+1,n)}r^{2(m+n)}-\xi_{(k+1,n)}r^{2(k+n)})-8\eta\right)\sin(n\theta)\notag,
  \end{align}   
from which we obtain~\eqref{poiss_c0km}.
  Analogously,
 \begin{align}
 \clB(\varphi) \psi_\fks&=\{A^{-1}\varphi,\psi_\fks\}_\fkp-\{\varphi,A^{-1}\psi_\fks\}_\fkp=
\{\zeta,\psi_\fks\}_\fkp-\{\varphi,\eta\sin(n\theta)\}_\fkp\notag
 \\
&= \tfrac{1}{r}(n\tfrac{\p\zeta}{\p  r}\psi_\fkc-n\eta\tfrac{\p \varphi}{\p  r}\cos(n\theta))=n(-(\tfrac12-\tfrac14r^2)\psi_\fkc+2\eta\cos(n\theta))\notag\\
&=-\tfrac{n}4\left((2-r^2)(\xi_{(m+1,n)}r^{2(m+n)}-\xi_{(k+1,n)}r^{2(k+n)})-8\eta\right)\cos(n\theta),\notag
  \end{align}  
from which we obtain~\eqref{poiss_s0km}.  
Finally,  for~$(n,m)\in\bbN_+\times\bbN$, we find that 
     \begin{align}
 8-\xi_{(m+1,n)}=8-(4(m+1+n)^2-n^2)&=8-3n^2-4(m+1)^2 -8n(m+1)\notag\\
&< 8-4(m+1)^2-8(m+1)<0,\notag
 \end{align}
 which finishes the proof.
\qed

\subsection{Proof of Lemma~\ref{L:poiss(n+k)_cc_gen}}\label{Apx:proofL:poiss(n+k)_cc_gen}
Let us denote, for positive integers~$n\ge1$ and~$k\ge1$,
\begin{align}
\psi&\coloneqq\xi_{(2,k)}\fkc_{(1,k)}-\xi_{(1,k)}\fkc_{(0,k)},\qquad &\eta&\coloneqq r^{2k}(r^{2}-r^{4})\notag\\
\phi&\coloneqq\xi_{(m+1,n)}\fkc_{(m,n)}-\xi_{(1,n)}\fkc_{(0,n)},\qquad &\zeta&\coloneqq r^{2n}(r^{2}-r^{2(m+1)}).\notag
\end{align}

By Corollary~\ref{C:invlap_cs} we have that
\begin{align}
A^{-1}\psi&=\eta\cos(k\theta),\qquad
A^{-1}\phi=\zeta\cos(n\theta),\notag
\end{align}
hence
 \begin{align}\label{Poiss.cc-mn}
 & \clB(\psi) \phi=\{\eta\cos(k\theta),\phi\}_\fkp - \{\psi,\zeta\cos(n\theta)\}_\fkp.
  \end{align}   
Using~\eqref{poisson_b-pol}, with~$\varpi_{(i,j)}\coloneqq\sin(i\theta)\cos(j\theta)$, we find
\begin{align}
\{\eta\cos(k\theta),\phi\}_\fkp&= \tfrac1{r}\left(\tfrac{\p\eta}{\p r}\tfrac{\p\phi}{\p\theta}\cos(k\theta)+k\eta\tfrac{\p\phi}{\p r}\sin(k\theta)\right)\notag\\
&= -\tfrac{n}{r}\left((2k+2)r^{2k+1}-(2k+4)r^{2k+3}\right)r^{2n}(\xi_{(m+1,n)}r^{2m}-\xi_{(1,n)})\varpi_{(n,k)}\notag\\
&\quad+\tfrac{k}{r}\left(r^{2k+2}-r^{2k+4}\right)r^{2n-1}(2(n+m)\xi_{(m+1,n)}r^{2m}-2n\xi_{(1,n)})\varpi_{(k,n)}.\notag
  \end{align}  
 Hence,  using~$2\varpi_{(n,k)}=\sin((n+k)\theta) +\sin((n-k)\theta)$, we arrive at
 \begin{subequations}\label{Poiss.cc-mn-1}
\begin{align}
\{\eta\cos(k\theta),\phi\}_\fkp&=r^{2(n+k)} \underline{\rho_s}\sin((n-k)\theta)+r^{2(n+k)} \overline{\rho_s}\sin((n+k)\theta),
  \end{align} 
 with the factors
 \begin{align}
 \underline{\rho_s}&\coloneqq -n\left((k+1)-(k+2)r^{2}\right)(\xi_{(m+1,n)}r^{2m}-\xi_{(1,n)})\notag\\
&\;\quad-k\left(1-r^{2}\right)((n+m)\xi_{(m+1,n)}r^{2m}-n\xi_{(1,n)});\notag\\
 \overline{\rho_s}&\coloneqq -n\left((k+1)-(k+2)r^{2}\right)(\xi_{(m+1,n)}r^{2m}-\xi_{(1,n)})\notag\\
&\;\quad+k\left(1-r^{2}\right)((n+m)\xi_{(m+1,n)}r^{2m}-n\xi_{(1,n)}).
  \end{align}  
   \end{subequations}

For the second term in the right-hand side of~\eqref{Poiss.cc-mn} we find
\begin{align}
&\{\psi,\zeta\cos(n\theta)\}_\fkp= \tfrac1{r}\left(-n\tfrac{\p\psi}{\p r}\zeta\sin(n\theta)-\tfrac{\p\psi}{\p\theta}\tfrac{\p\zeta}{\p r}\cos(n\theta)\right)\notag\\
&=-\tfrac{n}{r}\left(2(k+1)r^{2k+1}\xi_{(2,k)}-2kr^{2k-1}\xi_{(1,k)}\right)r^{2n}(r^{2}-r^{2(m+1)})\varpi_{(n,k)} \notag\\
&\quad+\tfrac{k}{r}\left(\xi_{(2,k)}r^{2k+2}-\xi_{(1,k)}r^{2k}\right)r^{2n+1}\bigl(2(n+1)-2(m+1+n)r^{2m}\bigr)\varpi_{(k,n)}.\notag
  \end{align}  
Hence, 
it follows that
 \begin{subequations}\label{Poiss.cc-mn-2}
\begin{align}
\{\psi,\zeta\cos(n\theta)\}_\fkp&=r^{2(n+k)} \underline{\sigma_s}\sin((n-k)\theta)+r^{2(n+k)} \overline{\sigma_s}\sin((n+k)\theta),
  \end{align} 
 with
 \begin{align}
 \underline{\sigma_s}&\coloneqq-n\left((k+1)r^{2}\xi_{(2,k)}-k\xi_{(1,k)}\right)(1-r^{2m}) \notag\\
&\quad-k\left(\xi_{(2,k)}r^{2}-\xi_{(1,k)}\right)\bigl((n+1)-(m+1+n)r^{2m}\bigr)\notag;\\
\overline{\sigma_s}&\coloneqq -n\left((k+1)r^{2}\xi_{(2,k)}-k\xi_{(1,k)}\right)(1-r^{2m}) \notag\\
&\quad+k\left(\xi_{(2,k)}r^{2}-\xi_{(1,k)}\right)\bigl((n+1)-(m+1+n)r^{2m}\bigr).
  \end{align} 
\end{subequations}

By combining~\eqref{Poiss.cc-mn}, \eqref{Poiss.cc-mn-1}, and~\eqref{Poiss.cc-mn-2}, we obtain
 \begin{subequations}\label{Poiss.cc-Poly}
 \begin{align}
 & \clB(\psi) \phi=r^{2(n+1)}(\underline{\rho_s}-\underline{\sigma_s})\sin((n-k)\theta)+r^{2(n+1)} (\overline{\rho_s}-\overline{\sigma_s})\sin((n+k).
  \end{align}   
Finally, note that
 \begin{align}
  \overline{\rho_s}-\overline{\sigma_s} &={C}_{+,1,m}^{(n,k)}r^{2(m+1)}+C_{+,0,m}^{(n,k)}r^{2m}+C_{+,1}^{(n,k)}r^2+C_{+,0}^{(n,k)},\\
  \underline{\rho_s}-\underline{\sigma_s} &={C}_{-,1,m}^{(n,k)}r^{2(m+1)}+C_{-,0,m}^{(n,k)}r^{2m}+C_{-,1}^{(n,k)}r^2+C_{-,0}^{(n,k)},  
     \end{align}  
  \end{subequations}   
with
\begin{align}\notag
{C}_{+,1,m}^{(n,k)}&\coloneqq n(k+2)\xi_{(m+1,n)}-k(n+m)\xi_{(m+1,n)}-n(k+1)\xi_{(2,k)}+k(m+1+n)\xi_{(2,k)}\notag\\
&= (2n-km)\xi_{(m+1,n)}+(k(m+1)-n)\xi_{(2,k)},\notag\\
C_{+,0,m}^{(n,k)}&\coloneqq -n(k+1)\xi_{(m+1,n)}+k(n+m)\xi_{(m+1,n)}+nk\xi_{(1,k)}-k(m+1+n)\xi_{(1,k)}\notag\\
&= (km-n)\xi_{(m+1,n)}-k(m+1)\xi_{(1,k)},\notag\\
C_{+,1}^{(n,k)}&\coloneqq-n(k+2)\xi_{(1,n)}+kn\xi_{(1,n)}+n(k+1)\xi_{(2,k)}-k(n+1)\xi_{(2,k)}\notag\\
&=-2n\xi_{(1,n)}+(n-k)\xi_{(2,k)},\notag\\
C_{+,0}^{(n,k)}&\coloneqq n(k+1)\xi_{(1,n)}- kn\xi_{(1,n)}-nk\xi_{(1,k)}+k(n+1)k\xi_{(1,k)}\notag\\
&=n\xi_{(1,n)}+k\xi_{(1,k)},\notag
\intertext{and}
{C}_{-,1,m}^{(n,k)}&\coloneqq n(k+2)\xi_{(m+1,n)}+k(n+m)\xi_{(m+1,n)}-n(k+1)\xi_{(2,k)}-k(m+1+n)\xi_{(2,k)}\notag\\
&= (k(m+2n)+2n)\xi_{(m+1,n)}-(k(m+1+2n)+n)\xi_{(2,k)},\notag\\
C_{-,0,m}^{(n,k)}&\coloneqq -n(k+1)\xi_{(m+1,n)}-k(n+m)\xi_{(m+1,n)}+nk\xi_{(1,k)}+k(m+1+n)\xi_{(1,k)}\notag\\
&= -(k(m+2n)+n)\xi_{(m+1,n)}+k(m+1+2n)\xi_{(1,k)},\notag\\
C_{-,1}^{(n,k)}&\coloneqq-n(k+2)\xi_{(1,n)}-kn\xi_{(1,n)}-n(k+1)\xi_{(2,k)}-k(n+1)\xi_{(2,k)}\notag\\
&=-2n(k+1)\xi_{(1,n)}+(k(2n+1)+n)\xi_{(2,k)},\notag\\
C_{-,0}^{(n,k)}&\coloneqq n(k+1)\xi_{(1,n)}+kn\xi_{(1,n)}-nk\xi_{(1,k)}-k(n+1)k\xi_{(1,k)}\notag\\
&=n(2k+1)\xi_{(1,n)}-k(2n+1)\xi_{(1,k)},\notag
  \end{align}  
which finishes the proof.
\qed

\subsection{Proof of Lemma~\ref{L:poiss(n+k)_cs_gen}}\label{Apx:proofL:poiss(n+k)_cs_gen}

We follow a slight variation of the arguments in Section~\ref{Apx:proofL:poiss(n+k)_cc_gen}.

Let us denote, for positive integers~$n\ge1$ and~$k\ge1$,
\begin{align}
\psi&\coloneqq\xi_{(2,k)}\fkc_{(1,k)}-\xi_{(1,k)}\fkc_{(0,k)},\qquad &\eta&\coloneqq r^{2k}(r^{2}-r^{4})\notag\\
\varphi&\coloneqq\xi_{(m+1,n)}\fks_{(m,n)}-\xi_{(1,n)}\fks_{(0,n)},\qquad &\zeta&\coloneqq r^{2n}(r^{2}-r^{2(m+1)}).\notag
\end{align}

By Corollary~\ref{C:invlap_cs} we have that
\begin{align}
A^{-1}\psi&=\eta\cos(k\theta),\qquad
A^{-1}\varphi=\zeta\sin(n\theta),\notag
\end{align}
hence
 \begin{align}\label{Poiss.cs-mn}
 & \clB(\psi) \varphi=\{\eta\cos(k\theta),\varphi\}_\fkp - \{\psi,\zeta\sin(n\theta)\}_\fkp.
  \end{align}   
Using~\eqref{poisson_b-pol}, with~$\gamma^s_{(i,j)}\coloneqq\sin(i\theta)\sin(j\theta)$ and~$\gamma^c_{(i,j)}\coloneqq\cos(i\theta)\cos(j\theta)$, we find
\begin{align}
\{\eta\cos(k\theta),\varphi\}_\fkp&= \tfrac1{r}\left(\tfrac{\p\eta}{\p r}\tfrac{\p\varphi}{\p\theta}\cos(k\theta)+k\eta\tfrac{\p\varphi}{\p r}\sin(k\theta)\right)\notag\\
&= \tfrac{n}{r}\left((2k+2)r^{2k+1}-(2k+4)r^{2k+3}\right)r^{2n}(\xi_{(m+1,n)}r^{2m}-\xi_{(1,n)})\gamma^c_{(n,k)}\notag\\
&\quad+\tfrac{k}{r}\left(r^{2k+2}-r^{2k+4}\right)r^{2n-1}(2(n+m)\xi_{(m+1,n)}r^{2m}-2n\xi_{(1,n)})\gamma^s_{(n,k)}.\notag
  \end{align}  
Now, by~$2\gamma^c_{(n,k)}=\cos((n+k)\theta) +\cos((n-k)\theta)$ and~$2\gamma^s_{(n,k)}=\cos((n-k)\theta) -\cos((n+k)\theta)$,
 \begin{subequations}\label{Poiss.cs-mn-1}
\begin{align}
\{\eta\cos(k\theta),\varphi\}_\fkp&=r^{2(n+k)} \underline{\rho_c}\cos((n-k)\theta)+r^{2(n+k)} \overline{\rho_c}\cos((n+k)\theta),
  \end{align} 
 with the factors
 \begin{align}
 \underline{\rho_c}&\coloneqq +n\left((k+1)-(k+2)r^{2}\right)(\xi_{(m+1,n)}r^{2m}-\xi_{(1,n)})\notag\\
&\;\quad +k\left(1-r^{2}\right)((n+m)\xi_{(m+1,n)}r^{2m}-n\xi_{(1,n)});\notag\\
 \overline{\rho_c}&\coloneqq +n\left((k+1)-(k+2)r^{2}\right)(\xi_{(m+1,n)}r^{2m}-\xi_{(1,n)})\notag\\
&\;\quad-k\left(1-r^{2}\right)((n+m)\xi_{(m+1,n)}r^{2m}-n\xi_{(1,n)}).
  \end{align}  
   \end{subequations}

For the second term in the right-hand side of~\eqref{Poiss.cs-mn} we find
\begin{align}
&\{\psi,\zeta\sin(n\theta)\}_\fkp= \tfrac1{r}\left(n\tfrac{\p\psi}{\p r}\zeta\cos(n\theta)-\tfrac{\p\psi}{\p\theta}\tfrac{\p\zeta}{\p r}\sin(n\theta)\right)\notag\\
&=+\tfrac{n}{r}\left(2(k+1)r^{2k+1}\xi_{(2,k)}-2kr^{2k-1}\xi_{(1,k)}\right)r^{2n}(r^{2}-r^{2(m+1)})\gamma^c_{(n,k)} \notag\\
&\quad+\tfrac{k}{r}\left(\xi_{(2,k)}r^{2k+2}-\xi_{(1,k)}r^{2k}\right)r^{2n+1}\bigl(2(n+1)-2(m+1+n)r^{2m}\bigr)\gamma^s_{(n,k)}.\notag
  \end{align}  
Hence, 
it follows that
 \begin{subequations}\label{Poiss.cs-mn-2}
\begin{align}
\{\psi,\zeta\sin(n\theta)\}_\fkp&=r^{2(n+k)} \underline{\sigma_c}\cos((n-k)\theta)+r^{2(n+k)} \overline{\sigma_c}\cos((n+k)\theta),
  \end{align} 
 with
 \begin{align}
 \underline{\sigma_c}&\coloneqq+n\left((k+1)r^{2}\xi_{(2,k)}-k\xi_{(1,k)}\right)(1-r^{2m}) \notag\\
&\quad+k\left(\xi_{(2,k)}r^{2}-\xi_{(1,k)}\right)\bigl((n+1)-(m+1+n)r^{2m}\bigr)\notag;\\
\overline{\sigma_c}&\coloneqq +n\left((k+1)r^{2}\xi_{(2,k)}-k\xi_{(1,k)}\right)(1-r^{2m}) \notag\\
&\quad-k\left(\xi_{(2,k)}r^{2}-\xi_{(1,k)}\right)\bigl((n+1)-(m+1+n)r^{2m}\bigr)
  \end{align} 
\end{subequations}

By combining~\eqref{Poiss.cs-mn}, \eqref{Poiss.cs-mn-1}, and~\eqref{Poiss.cs-mn-2}, we obtain
 \begin{align}\notag
 & \clB(\psi) \phi=r^{2(n+1)}(\underline{\rho_c}-\underline{\sigma_c})\sin((n-k)\theta)+r^{2(n+1)} (\overline{\rho_c}-\overline{\sigma_c})\sin((n+k).
  \end{align} 
  
 Note that  the expressions in~\eqref{Poiss.cs-mn-1} and~\eqref{Poiss.cs-mn-2} related to those in~\eqref{Poiss.cc-mn-1} and~\eqref{Poiss.cc-mn-2} by
   \begin{align}\notag
   \underline{\rho_c}=- \underline{\rho_s},\qquad \underline{\sigma_c}=-\underline{\sigma_s},\qquad
    \overline{\rho_c}=-\overline{\rho_s},\qquad \overline{\sigma_c} =-\overline{\sigma_s}.
    \end{align} 
  
Therefore, we arrive at the analogue of~\eqref{Poiss.cc-Poly},
 \begin{align}\notag
 & \clB(\psi) \phi=-r^{2(n+1)}Q_{-,m}^{(n,k)}(r^2)\cos((n-k)\theta)-r^{2(n+1)}Q_{+,m}^{(n,k)}(r^2) \cos((n+k),
  \end{align}   
 with the polynomials~$Q_{+,m}^{(n,k)}$ and~$Q_{-,m}^{(n,k)}$ as in Lemma~\ref{L:poiss(n+k)_cc_gen}.
\qed

\subsection{Proof of Lemma~\ref{L:3vec-(n+1)}}\label{Apx:proofL:3vec-(n+1)}
Recall that, by Lemma~\ref{L:poiss(n+k)_cc_gen}, we have
\begin{align}\notag
Q^{(n,k)}_{+,m}(s)&={C}_{+,1,m}^{(n,k)}s^{m+1}+C_{+,0,m}^{(n,k)}s^{m}+C_{+,1}^{(n,k)}s+C_{+,0}^{(n,k)},\\
\intertext{In particular,}
Q^{(n,1)}_{+,1}(s)&={C}_{+,1,1}^{(n,1)}s^{2}+C_{+,0,1}^{(n,1)}s+C_{+,1}^{(n,1)}s+C_{+,0}^{(n,1)},\notag \\
Q^{(n-1,2)}_{+,1}(s)&={C}_{+,1,1}^{(n-1,2)}s^{2}+C_{+,0,1}^{(n-1,2)}s+C_{+,1}^{(n-1,2)}s+C_{+,0}^{(n-1,2)},\notag \\
Q^{(n,1)}_{+,2}(s)&={C}_{+,1,2}^{(n,1)}s^{3}+C_{+,0,2}^{(n,1)}s^{2}+C_{+,1}^{(n,1)}s+C_{+,0}^{(n,1)},\notag \\
Q^{(n-1,2)}_{+,2}(s)&={C}_{+,1,2}^{(n-1,2)}s^{3}+C_{+,0,2}^{(n-1,2)}s^{2}+C_{+,1}^{(n-1,2)}s+C_{+,0}^{(n-1,2)}.\notag 
   \end{align} 
 
 Now we introduce the matrix
  \begin{align}\notag  
X^{[n]}\coloneqq
 \begin{bmatrix}
 C_{+,0}^{(n,1)}&\quad C_{+,0}^{(n-1,2)} &\quad C_{+,0}^{(n,1)} &\quad C_{+,0}^{(n-1,2)} \\
C_{+,0,1}^{(n,1)}+C_{+,1}^{(n,1)}&\quad C_{+,0,1}^{(n-1,2)}+C_{+,1}^{(n-1,2)}&\quad C_{+,1}^{(n,1)} &\quad C_{+,1}^{(n-1,2)} \\
{C}_{+,1,1}^{(n,1)} &\quad {C}_{+,1,1}^{(n-1,2)}&\quad C_{+,0,2}^{(n,1)}&\quad C_{+,0,2}^{(n-1,2)}\\
0&\quad0 &\quad {C}_{+,1,2}^{(n,1)}&\quad {C}_{+,1,2}^{(n-1,2)}
 \end{bmatrix} 
   \end{align}    
  and observe that 
  \begin{align}\notag  
\linspan\{Q^{(n,1)}_{+,1}(s),Q^{(n-1,2)}_{+,1}(s),Q^{(n,1)}_{+,2}(s),Q^{(n-1,2)}_{+,2}(s)\}=\begin{bmatrix}
 1&s&s^2&s^3
 \end{bmatrix} X^{[n]} \bbR^{4\times 1}.
   \end{align}   
 Therefore, the statement of Lemma~\ref{L:3vec-(n+1)} will follow, once we show that~$X^{[n]}\bbR^{4\times 1}=\bbR^{4\times 1}$,
that is once we show that~$X^{[n]}$ is nonsingular, for integers~$n\ge3$. For this purpose, we can find  an expression for~$\det(X^{[a]})$ by symbolic computations using~{\sc Mathematica}
~(V.~13.2, Wolfram Research Inc., Champaign, IL, USA, 2023, \url{https://www.wolfram.com/mathematica}),
replacing~$n$ by a real variable~$a\in\bbR$. In this way we find the polynomial expression
\begin{subequations}\label{detXs}
\begin{align}
\det(X^{[a]})&\coloneqq
-8 (a-2)(a+1)^2 \clP(a) ,\qquad a\in\bbR,\\
\mbox{with}\quad\clP(a)&\coloneqq-405405 - 257382a + 37892a^2 + 64702a^3\notag\\
&\quad\; + 19149a^4 + 2376a^5 + 108a^6.
 \end{align}
 \end{subequations}

Next we show that~$X^{[a]}$ is nonsingular for all~$a>2$. 
It is sufficient to show that the polynomial~$\clP(a)$ in~\eqref{detXs} has no zeros for~$a>2$.
Observe that
 \begin{equation}\label{ddetXn6}
(\tfrac{\rmd}{\rmd a})^6\clP(a)=108\cdot 6!>0,\quad\mbox{for all}\quad a>0,
 \end{equation}
 and, again by symbolic computations using~{\sc Mathematica}, we can find that
  \begin{align} 
   (\tfrac{\rmd}{\rmd a})^5\clP(2)&=440640,\qquad&
   (\tfrac{\rmd}{\rmd a})^4\clP(2)&=1185336,\notag\\
   (\tfrac{\rmd}{\rmd a})^3\clP(2)&=1981284,\qquad&
   (\tfrac{\rmd}{\rmd a})^2\clP(2)&=2203360,\notag\\
   \tfrac{\rmd}{\rmd a}\clP(2)&=1494194,\qquad&
   \clP(2)&=138343,\notag
   \end{align}  
 which combined with~\eqref{ddetXn6} imply that~$\clP(a)>0$ for all~$a>2$. Note that~$(\tfrac{\rmd}{\rmd a})^{i+1}>0$ for all~$a>2$ and~$(\tfrac{\rmd}{\rmd a})^{i}\clP(2)>0$ imply that~$(\tfrac{\rmd}{\rmd a})^{i}\clP(a)>0$ for all~$a>2$.
Further, we can also find that $\clP(1)=-538560$,
which leads us to
~$\det(X^{[n]})\ne0$ for all~$n\in\bbN_+\setminus\{2\}$.
\qed

\subsection{Proof of Lemma~\ref{L:n=k}}\label{Apx:proofL:n=k}
Recalling ${C}_{-,1,m}^{(n,k)}$ in Lemma~\ref{L:poiss(n+k)_cc_gen}, for $k=n\ge1$, we find
\begin{align}
{C}_{-,1,m}^{(n,k)}&=  (k(m+2n)+2n)\xi_{(m+1,n)}-(k(m+1+2n)+n)\xi_{(2,k)}\notag\\
{C}_{-,1,m}^{(n,n)}&= (n(m+2n)+2n)\xi_{(m+1,n)}-(n(m+1+2n)+n)\xi_{(2,n)}\notag\\
&= n(m+2n +2)(\xi_{(m+1,n)}-\xi_{(2,n)})\notag
\end{align}
and, recalling~\eqref{xi_mn},
\begin{align}
\tfrac{1}{n(m+2n +2)}{C}_{-,1,m}^{(n,k)}&=4(m+1+n)^2-n^2-(4(n+2)^2-n^2)\notag\\
&=4((m-1)^2+2(m-1)(n+2))=4(m-1)(m+3+2n),\notag
\end{align}
which gives us~${C}_{-,1,m}^{(n,k)}>0$ for all~$m>1$.
\qed

\bigskip\noindent
{\bf Aknowlegments.}
S. Rodrigues acknowledges partial support from
the State of Upper Austria and the Austrian Science
Fund (FWF): P 33432-NBL.
%

%
%

\bibliographystyle{plainurl}
{\smaller
\bibliography{AppConNS_Disc}
}
\end{document}